\newtheorem{thm}{Theorem}[section]
\newtheorem{cor}[thm]{Corollary}
\newtheorem{lem}[thm]{Lemma}
\newtheorem{prop}[thm]{Proposition}
\theoremstyle{definition}
\newtheorem{defn}[thm]{Definition}
\newtheorem{example}[thm]{Example}
\theoremstyle{remark}
\newtheorem{rem}[thm]{Remark}
\numberwithin{equation}{section}
\newcommand{\Cc}{\mathcal{C}}
\newcommand{\cC}{\mathcal{C}}
\newcommand{\cP}{\mathcal{P}}
\newcommand{\cW}{\mathcal{W}}
\newcommand{\cM}{\mathcal{M}}
\newcommand{\cS}{\mathcal{S}}
\newcommand{\Z}{{\mathbb Z}}
\newcommand{\F}{{\mathbf F}}
\newcommand{\fq}{{\mathbb F}_q}
\newcommand{\fqc}{{\mathbb F}_{q^c}}
\newcommand{\fqt}{{\mathbb F}_{q^2}}
\newcommand{\Q}{{\mathbb Q}}
\newcommand{\rk}{{\rm rk}}
\newcommand{\ga}{{\alpha}}
\newcommand{\e}{{\epsilon}}
\newcommand\qbin[3]{\left[\begin{matrix} #1 \\ #2 \end{matrix} \right]_{#3}}
\begin{document}
	\renewcommand\baselinestretch{1.2}
	\renewcommand{\arraystretch}{1}
	\def\base{\baselineskip}
	\font\tenhtxt=eufm10 scaled \magstep0 \font\tenBbb=msbm10 scaled
	\magstep0 \font\tenrm=cmr10 scaled \magstep0 \font\tenbf=cmb10
	scaled \magstep0
	
	
	\def\evenhead{{\protect\centerline{\textsl{\large{I. Blanco-Chac\'on, E. Byrne, I. Duursma and J. Sheekey}}}\hfill}}
	
	\def\oddhead{{\protect\centerline{\textsl{\large{On the Zeta Function of a Rank Metric Code}}}\hfill}}
	
	\pagestyle{myheadings} \markboth{\evenhead}{\oddhead}
	
	\thispagestyle{empty}
	
	\author{I. Blanco-Chac\'on}
	\author{E. Byrne}
	\author{I. Duursma}
	\author{J. Sheekey}
	\address{I. Blanco-Chac\'on, E. Byrne, and J. Sheekey \vspace{-4mm} }
	\address{School of Mathematics and Statistics, University College Dublin}
	\address{I. Duursma \vspace{-4mm} }
	\address{Deptartment of Mathematics, University of Illinois Urbana-Champaign}

\title{Rank-Metric Codes and Zeta Functions}

\maketitle

\begin{abstract}
We define the rank-metric zeta function of a code as a generating function of its normalized $q$-binomial moments. We show 
that, as in the Hamming case, the zeta function gives a generating function for the weight enumerators of rank-metric codes. We further prove a functional equation and derive an upper bound for the minimum distance in terms of the reciprocal roots of the zeta function. Finally, we show invariance under suitable puncturing and shortening operators and study the distribution of zeroes of the zeta function for a family of codes.
\end{abstract}

\section{Introduction}

In \cite{D01,D99}, the zeta polynomial of a $q$-ary linear code of length $n$ and minimum distance $d$ is defined to be the unique polynomial $P_{\cC}(T)$ of degree at most $n-d$ such that the generating function has expansion around $T$ given by
\begin{equation}\label{eq:Duu} 
   \frac{P_{\cC}(T)}{(1-T)(1-qT)}(xT+y(1-T))^n = \cdots +\frac{A_{\cC}(x,y)-x^n}{q-1} T^{n-d}+ \cdots,
\end{equation}   
where $A_{\cC}(x,y)$ is the Hamming weight enumerator of $\cC$. For an MDS code $\cC$, the weight enumerator $A_{\cC}(x,y)$ is obtained by setting $P_\cC(T)=1$. In general the degree of $P_\cC(T)$ is given by $n+2-d-d^\perp \geq 0$ and is equal to zero if and only if the code is MDS. For codes with high minimum distance and high dual minimum distance, the weight enumerator is completely determined by the polynomial $P_\cC(T)$ of small degree. In such cases there is an advantage in encoding the weight enumerator $A_\cC(x,y)$ by the smaller polynomial $P_\cC(T)$. Among the properties of the zeta polynomial $P_\cC(T)$ is an easy transform between the polynomials $P_\cC(T)$ and $P_{\cC^\perp}(T)$ for the code and its dual, and invariance under puncturing and shortening for codes that have a transitive automorphism group. So that for example a cyclic code and its extended code use (\ref{eq:Duu}) with the same zeta polynomial $P_\Cc(T)$ but with different $n$ and $d$.
The special form of the generating function is motivated by its interpretation for Reed-Solomon codes. A Reed-Solomon codeword represents the values of a polynomial in the elements of a finite field. The generating function counts the number of zeroes of polynomials and therefore describes the weight distribution of Reed-Solomon codes.

In this paper we establish that the zeta polynomial and its properties have $q-$analogues for rank-metric codes. 
Rank-metric codes have a Singleton type upper bound for the parameters. Codes that attain the bound are called maximum rank distance codes (MRD codes). They include Gabidulin codes that are the $q$-analogues of Reed-Solomon codes. The rank distance distribution of a MRD code is uniquely determined by the parameters of the code. The rank-metric zeta function has the property that $P_\Cc(T) = 1$ for MRD codes, and thus in particular for Gabidulin codes. In general the zeta polynomial is of low degree if both the minimum distance and the dual minimum distance of a code are close to the Singleton bound.

In Section 2 we summarize some preliminary material and definitions.
In Section 3 we introduce the zeta function $Z_{\cC}(T)$ of a rank metric code $\cC$ in terms of its normalized $q$-binomial moments, and we define the $q-$analogue of the zeta polynomial for a rank-metric code with the generating function $Z_{\cC}(T)$. In analogy with the Hamming metric case $P_\cC(T)=1$ if and only if $\cC$ is an MRD code. 
In Section 4 we relate the weight enumerator of a rank-metric code $\cC$ with its zeta polynomial and obtain the rank-metric analogue of (\ref{eq:Duu}). Moreover, we show that coefficients of the zeta polynomial coincide with those in the representation of a weight enumerator as a $\Q$-linear combination of MRD weight enumerators. 
In Section 5 we establish relations between the polynomials $P_\cC(T)$ and $P_{\cC^\perp}(T)$ for a code and its dual. We then demonstrate that the zeroes of $P_\cC(T)$ can be related to the minimum distance of a code by giving an upper bound for the minimum distance in terms of these zeroes. 
In Section 6 we define operations of puncturing and shortening of rank-metric codes and show that the action of these on the {\em average} weight enumerator of a code can be realized in terms of $q$-derivatives. As in the Hamming metric case the zeta polynomial is invariant under puncturing and shortening. We introduce the normalized weight enumerator and express the action of puncturing and shortening on it in terms of $q$-commuting operators. 
In Section 7 we discuss the zeroes of the zeta polynomial. For a self-dual code, the reciprocal zeroes occur in pairs $\{ \alpha, q^m / \alpha \}$ and occur as conjugate pairs if and only if both have absolute value $q^{m/2}.$ We consider some classes of codes with zeta polynomials for which all complex zeroes have the same absolute value.
\section{Preliminaries}

We will assume that $m,n$ are positive integers with $n \leq m$ and that $q$ is a prime power. We write $\fq^{m \times n}$ to denote the $m \times n$ matrices with entries in $\fq$.
For any $X \in \fq^{m\times n}$, we write $\ker X$ or $X^\perp$ to denote the (right) nullspace of $X$ in $\fq^n$. That is, $$X^\perp :=\{y \in \fq^n : Xy^T = 0 \}.$$ 
Unless explicitly stated otherwise, we assume that $\cC$ is an $\fq$-linear subspace of $\fq^{m \times n}$.

\begin{defn}
	The {\em dual code} of $\cC$ is the $\fq$-linear code
	$$\cC^\perp:= \{Y \in \fq^{m \times n}: \mbox{Tr}(XY^t)=0 \mbox{ for all } X \in \cC\}.$$
\end{defn}

The map $(X,Y) \mapsto \mbox{Tr}(XY^t)$ defines an inner
product on the space $\fq^{m \times n}$, so we have $\dim(\cC^\perp)=mn-\dim(\cC)$ and $\cC^{\perp\perp}=\cC$.

\begin{defn}
	The \emph{rank distance} between matrices 
	$X,Y \in \fq^{m \times n}$ is $d(X,Y):=\mbox{rk}(X-Y)$.
	For 
	$|\cC| \ge 2$, the \emph{minimum rank 
		distance} of 
	$\cC$ is the integer defined by 
	$d(\cC):= \min\{d(X,Y) : X,Y \in \cC, \ X \neq Y\}$.
	The \emph{weight}
	\emph{distribution} of $\cC$ is the integer vector 
	$W(\cC)=(W_t(\cC) : 0 \le t \le n)$,
	where, for all $t \in \{0,...,n\}$, 
	$$W_t(\cC):=|\{X \in \cC : \mbox{rk}(X)=t\}|.$$       
	The rank metric weight enumerator of $\mathcal{C}$ is the bivariate polynomial
	\begin{equation}
	W_{\mathcal{C}}(x,y)=\sum_{t=0}^nW_{t}(\mathcal{C})x^{n-t}y^{t}.
	\label{weightenum}
	\end{equation}
\end{defn}

Recall that the Gaussian binomial or $q$-binomial coefficient is defined by 
$$\qbin{n}{r}{q} := \left\{ \begin{array}{ll}
                    \displaystyle{\frac{(q^n-1)(q^n-q)\cdots(q^n-q^{r-1})}{(q^r-1)(q^r-q)\cdots(q^r-q^{r-1})} }& \text{ if } r \in \{1,...,n\},\\
                    1                                                                           & \text{ if } r = 0, \\
                    0 & \text{ otherwise.}
                    \end{array}
                    \right.
                     $$
This quantity counts the number of $r$-dimensional subspaces of an $n$-dimensional subspace over $\fq$. Since $q$ is fixed throughout this paper, for brevity we write $\qbin{n}{r}{}$ to mean $\qbin{n}{r}{q}$. 
The rank-metric analogue of the Singleton bound says that $|\cC| \leq q^{m(n-d+1)}$ for any $\cC$ of minimum rank-distance $d$ \cite{D78}.
Rank metric codes that meet this bound are called maximum-rank-distance (MRD) codes. It has been known for some decades that such codes exist for all choices of $m,n,d$ \cite{D78,G85,R91}.
In \cite{D78} it was shown that the weight enumerator of an MRD code in $\fq^{m \times n}$ of minimum distance $d$ is uniquely determined and given by
$$M_{n,d}(x,y):= x^n+ \sum_{t=d}^n\sum_{i=d}^t(-1)^{t-i}q^{\binom{t-i}{2}}{{n}\brack{t}}{{t}\brack{i}}(q^{m(i-d+1)}-1)x^{n-t}y^t,$$
where the coefficient of $x^{n-t}y^t$ counts the number of matrices in the MRD code of rank $t$.
It is not hard to see that for fixed $n$ the $M_{n,d}$ are linearly independent over $\Q$. Any code $\cC$ whose dual code has minimum distance at least 2 has weight enumerator that can be expressed as a $\Q$-linear combination of the MRD weight enumerators. That is,
there exist $p_0,...,p_{n-d} \in \Q$ satisfying
$$ W_{\cC}(x,y) = p_0 M_{n,d}(x,y)+ \cdots + p_{n-d} M_{n,n}(x,y).$$
These coefficients will turn out to define the {\em zeta polynomial} of $\cC$, that we introduce in the next section.

Let $P$ be a partially ordered set (poset, from now on). The M\"obius function for $P$ is defined via the recursive formula
\begin{equation}
\begin{array}{l}
\mu(x,x)=1,\\
\displaystyle\mu(x,z)=-\sum_{x\leq y<z}\mu(x,y).
\end{array}
\end{equation}
\begin{lem}[M\"obius Inversion Formula]Let $f,g:P\to\mathbb{Z}$ be any two functions on $P$. Then
	\begin{itemize}
		\item[1. ]$\displaystyle f(x)=\sum_{x\leq y}g(y)\mbox{ if and only if }g(x)=\sum_{x\leq y}\mu(x,y)f(y)$.
		\item[2. ]$\displaystyle f(x)=\sum_{x\geq y}g(y)\mbox{ if and only if }g(x)=\sum_{x\geq y}\mu(y,x)f(y)$.
	\end{itemize}
\end{lem}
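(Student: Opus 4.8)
The plan is to reduce both equivalences to a single \emph{orthogonality relation} for the M\"obius function: for all $x\le z$ in $P$,
\begin{equation}\label{eq:moborth}
\sum_{x\le y\le z}\mu(x,y)=\delta_{x,z}\qquad\text{and}\qquad\sum_{x\le y\le z}\mu(y,z)=\delta_{x,z},
\end{equation}
where $\delta_{x,z}$ is $1$ if $x=z$ and $0$ otherwise, and where we work under the standing (and, in our applications, automatic) assumption that $P$ is finite, so that all the sums below are finite and may be reordered at will. The first relation in \eqref{eq:moborth} is nothing but a rewriting of the defining recursion: for $x<z$ it says $\mu(x,z)+\sum_{x\le y<z}\mu(x,y)=0$, and for $x=z$ it is $\mu(x,x)=1$. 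The second, ``dual'' relation I would obtain either by a parallel induction on the length of the interval $[x,z]$ or, more cleanly, by passing to the incidence algebra of $P$: with the convolution $(h_1\ast h_2)(x,z)=\sum_{x\le y\le z}h_1(x,y)h_2(y,z)$, the first relation reads $\mu\ast\zeta=\delta$, where $\zeta(x,y)\equiv 1$ and $\delta$ is the identity; since $\zeta$ also admits a right inverse (run the recursion ``from above''), associativity forces the left and right inverses of $\zeta$ to coincide, giving $\zeta\ast\mu=\delta$, which is the second relation.

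With \eqref{eq:moborth} in hand, I would prove part~1 by straight substitution. For the forward direction, assuming $f(x)=\sum_{x\le y}g(y)$, I plug this into $\sum_{x\le y}\mu(x,y)f(y)$, interchange the two summations so as to collect the coefficient $\sum_{x\le y\le z}\mu(x,y)$ of each $g(z)$, and invoke the first relation of \eqref{eq:moborth} to see that only the term $z=x$ survives, leaving $g(x)$. For the converse, assuming $g(x)=\sum_{x\le y}\mu(x,y)f(y)$, I substitute into $\sum_{x\le y}g(y)$, interchange summations so that $f(z)$ appears with coefficient $\sum_{x\le y\le z}\mu(y,z)$, and now invoke the second relation of \eqref{eq:moborth}. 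Part~2 then requires no new work: it is exactly part~1 applied to the opposite poset $P^{\mathrm{op}}$, whose M\"obius function is $\mu_{P^{\mathrm{op}}}(x,y)=\mu(y,x)$, so that the two equivalences of part~2 for $P$ are the two equivalences of part~1 for $P^{\mathrm{op}}$.

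The one point that genuinely needs care is the dual orthogonality relation — equivalently, the fact that the zeta element $\zeta$ of the incidence algebra has a two-sided inverse — because the forward and converse implications of part~1 each rely on a different half of \eqref{eq:moborth}. Beyond that, the argument is just careful bookkeeping with finite double sums, and I do not foresee any further difficulty.
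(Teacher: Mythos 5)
Your proof is correct and complete. Note, however, that the paper states this M\"obius Inversion Formula as a classical background lemma \emph{without} giving a proof of its own (it is used only as a tool in later derivations, with the specific M\"obius function for the subspace lattice recalled in equation~(2.3) of the paper), so there is no paper proof to compare against. Your argument is the standard one: reduce inversion to the two-sided orthogonality $\mu*\zeta=\zeta*\mu=\delta$ in the incidence algebra, derive the formula by substitution and interchange of (finite) double sums, and dispatch part~2 by passing to the opposite poset. You correctly isolate the subtle point that the forward and backward implications rely on the two different halves of the orthogonality relation, and that the right-inverse identity $\zeta*\mu=\delta$ requires a separate observation (either a parallel induction from above or associativity of the incidence algebra); your handling of this is sound. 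The finiteness assumption you invoke is indeed automatic in the paper's setting, since the poset in question is the lattice of subspaces of a finite-dimensional $\mathbb{F}_q$-vector space.
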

In particular, for the subspace lattice of $\mathbb{F}_q^{m\times n}$ (with partial order defined by set inclusion), and for two subspaces $U$ and $V$ of dimensions $u$ and $v$, we have that
\begin{equation}\label{eq:mobinv}
\mu\left(U,V\right)=\left\lbrace
\begin{array}{cl}
(-1)^{v-u}q^{\binom{v-u}{2}}&\mbox{ if }U\leq V\\
\\
0 & \mbox{ otherwise.}
\end{array}
\right.
\end{equation}

\section{The Zeta Function}

We introduce the {\em zeta function} of a rank metric code $\cC$ in terms of its normalized $q$-binomial moments, following the approach for the Hamming metric case as given in \cite{D04}. 
In order to do so we use the notion of a shortened code 
(cf. \cite{R16}).

\begin{defn}Let $U\subseteq \mathbb{F}_q\mathbb{F}_q^n$ be a subspace of dimension $u$. 
	The {\em shortened subcode} of $\mathcal{C}$ with respect to $U$ is
	$$
	\mathcal{C}_U:=\left\{X\in \mathcal{C}: U\leq X^\perp\right\}.
	$$
	The strict shortening of $\mathcal{C}$ by $U$ is
	$$
	\widehat{\mathcal{C}}_U:=\left\{X\in \mathcal{C}: U= X^\perp\right\}.
	$$
\end{defn}
Notice that $\mathcal{C}_U$ is a subspace (indeed a subcode of $\cC$), but $\widehat{\mathcal{C}}_U$ in general is not. Clearly every element of $\widehat{\cC}_{U}$
has rank exactly $n-u$.

\begin{defn}For $u\geq 0$, the $u$-th binomial moment of $\mathcal{C}$ is defined by
	$$
	B_u(\mathcal{C}):=\sum_{dim(U)=n-u}(|\cC_U|-1).
	$$
\end{defn}

Lemma \ref{lem:binmom} is a straightforward consequence of the following duality result.
The result is used in \cite{R16} to obtain a short proof of the MacWilliams' Identity for rank-metric codes.

\begin{lem}[\cite{R16}, Lemma 28]\label{lem:rav}
	Let $U$ be a subspace of $\fq^n$ of dimension $n-u$. Then
	$$|\cC_U| = \frac{|\cC||\cC^\perp_{U^\perp}|}{q^{m(n-u)}}.$$
\end{lem}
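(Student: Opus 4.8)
The plan is to prove the identity $|\cC_U| = |\cC|\,|\cC^\perp_{U^\perp}|/q^{m(n-u)}$ by a standard counting/dimension argument applied to the restriction map determined by $U$. First I would fix a subspace $U \le \fq^n$ of dimension $n-u$ and consider the $\fq$-linear evaluation map $\varphi_U : \fq^{m\times n} \to \fq^{m \times (n-u)}$ that sends a matrix $X$ to the matrix of the linear map $X$ restricted to $U$ (equivalently, if the columns of a matrix $G$ span $U$, then $\varphi_U(X) = XG$). The key observation is that $X \in \cC_U$ precisely when $U \le X^\perp = \ker X$, i.e. when $\varphi_U(X) = 0$; thus $\cC_U = \ker(\varphi_U|_\cC)$ and
$$|\cC_U| = \frac{|\cC|}{|\varphi_U(\cC)|}.$$
So the whole problem reduces to computing $|\varphi_U(\cC)|$, the size of the image of $\cC$ under restriction to $U$.

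Next I would identify $\varphi_U(\cC)$ as the dual (inside $\fq^{m\times(n-u)}$, with respect to the analogous trace form) of a shortening of $\cC^\perp$. Concretely, one checks that a matrix $Z \in \fq^{m\times(n-u)}$ is orthogonal to every element of $\varphi_U(\cC)$ if and only if the matrix $Z$, "extended by zero" off $U$ (i.e. $ZG^{\mathrm{left-inverse}}$-type extension, or more invariantly the matrix representing the composite $\fq^n \twoheadrightarrow \fq^n/U^{\perp?} \cdots$), lies in $\cC^\perp$; and such extended matrices are exactly the elements of $\cC^\perp$ whose column space is contained in... — the bookkeeping here is cleanest if one phrases it as: $\varphi_U(\cC)^{\perp} \cong \cC^\perp_{U^\perp}$ as $\fq$-vector spaces, where on the right $U^\perp \le \fq^n$ has dimension $u$ and the shortening condition $U^\perp \le Y^\perp$ says exactly that $Y$ vanishes on $U^\perp$. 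Taking cardinalities, $|\varphi_U(\cC)| = q^{m(n-u)} / |\cC^\perp_{U^\perp}|$, and substituting into the displayed formula above yields the claim.

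I expect the main obstacle to be the bookkeeping in the second step: setting up the correct identification between "the orthogonal complement of the restriction $\varphi_U(\cC)$ in the smaller matrix space" and "the shortening $\cC^\perp_{U^\perp}$ in the original space," keeping track of which side uses $U$ and which uses $U^\perp$, and checking that the trace forms match up under the chosen bases so that no extra factor of $q$ creeps in. A clean way to avoid ad hoc matrix manipulations is to work basis-free: extend $U$ to a full flag, split $\fq^n = U \oplus U'$, write every $X \in \fq^{m\times n}$ in block form $(X_1 \mid X_2)$ accordingly, observe $\varphi_U(X) = X_1$ and $X \in \cC_U \iff X_1 = 0$, and note that the trace pairing decomposes as $\Tr(XY^t) = \Tr(X_1 Y_1^t) + \Tr(X_2 Y_2^t)$; then $\varphi_U(\cC)^\perp = \{Y_1 : (0\mid Y_2) \in \cC^\perp \text{ for some } Y_2\}$... one must instead argue $\varphi_U(\cC)^\perp = \{Z_1 : (Z_1 \mid 0) \perp \varphi_U(\cC)\}$ corresponds to $\{Y \in \cC^\perp : Y_2 = 0\} = \cC^\perp_{U'^{}}$ with $U' = U^{\perp}$ under the standard form — at which point a dimension count $\dim \varphi_U(\cC) + \dim \varphi_U(\cC)^\perp = m(n-u)$ closes the argument. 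Alternatively, since Lemma \ref{lem:rav} is quoted verbatim from \cite{R16}, one may simply cite that reference and only sketch the evaluation-map argument above as motivation.
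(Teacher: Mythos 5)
Your opening reduction is sound: with $G$ an $n\times(n-u)$ matrix whose columns are a basis of $U$, the map $\varphi_U(X)=XG$ has $\ker(\varphi_U|_\cC)=\cC_U$, so $|\cC_U|=|\cC|/|\varphi_U(\cC)|$, and everything hinges on computing $|\varphi_U(\cC)|$ via its dual. The gap is in the ``clean, basis-free'' step: you split $\fq^n=U\oplus U'$ and then take $U'=U^\perp$, but over $\fq$ the standard form is not anisotropic, so $U^\perp$ is generally \emph{not} a complement of $U$ (e.g.\ over $\F_2$, $U=\langle(1,1,0,\dots)\rangle$ satisfies $U\subseteq U^\perp$). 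With an arbitrary complement $U'$, the change of coordinates matrix $P$ is not orthogonal, and the trace pairing in the original coordinates does not decompose as $\Tr(X_1Y_1^t)+\Tr(X_2Y_2^t)$; so the identification of $\varphi_U(\cC)^\perp$ with $\{Y\in\cC^\perp: Y_2=0\}$ and then with $\cC^\perp_{U^\perp}$ breaks down precisely when $U$ is degenerate for the standard form. As stated, your argument proves the lemma only for non-degenerate $U$.

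The fix is to drop the block decomposition and argue directly, which is essentially what the paper does. The adjoint of $\varphi_U$ with respect to the two trace forms is $Z\mapsto ZG^t$, and its image is $R_U:=\{Y\in\fq^{m\times n}:\text{every row of }Y\text{ lies in }U\}$, a space of size $q^{m(n-u)}$. Then $Z\in\varphi_U(\cC)^\perp\iff ZG^t\in\cC^\perp$, and since $Z\mapsto ZG^t$ is injective this gives $\varphi_U(\cC)^\perp\cong\cC^\perp\cap R_U$. Finally, $Y$ has all rows in $U=(U^\perp)^\perp$ iff $Yw^t=0$ for all $w\in U^\perp$, i.e.\ iff $U^\perp\le Y^\perp$; so $\cC^\perp\cap R_U=\cC^\perp_{U^\perp}$, and the count closes without any assumption on $U$. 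The paper's proof is the same dimension count packaged slightly differently: it introduces $R_U$ outright and considers the bilinear form $b:\cC\times R_U\to\fq$, $b(X,Y)=\Tr(XY^t)$, whose left null space is $\cC_U$ and whose right null space is $R_U\cap\cC^\perp=\cC^\perp_{U^\perp}$, giving $|\cC|/|\cC_U|=|R_U|/|\cC^\perp_{U^\perp}|$ directly. Recasting your argument in terms of $R_U$ (rather than a complement $U'$) makes it identical to the paper's and, more importantly, correct for all $U$.
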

\begin{proof} Let $R_U$ be the subspace of $m \times n$ matrices over ${\mathbb F}_q$ with row vectors in~$U$, and let $b : \Cc \times R_U$ be the bilinear form with $b(X,Y) = \mbox{Tr}(XY^t).$ The left null space is $\Cc_U$ and the right null space is $R_U \cap C^\perp = C^\perp_{U^\perp}$. So  that $|\Cc|/|C_U| = |R_U|/|C^\perp_{U^\perp}|.$ 
\end{proof}

\begin{lem}\label{lem:binmom}
  Let $\cC$ have dimension $k$ and minimum rank distance $d$, and let $\cC^\perp$ have minimum distance $d^\perp$.
  Then 
  $$ B_u(\cC)= \left\{ \begin{array}{ll}
               0                               & \text{ if } u     < d \\
               (q^{k-m(n-u)}-1) \qbin{n}{u}{} & \text{ if } u     > n-d^\perp \\    
                       \end{array}  \right.$$
\end{lem}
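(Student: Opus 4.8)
The plan is to handle the two cases separately, using the defining formula $B_u(\cC) = \sum_{\dim U = n-u}(|\cC_U| - 1)$ together with the elementary observation that every nonzero $X \in \cC$ with $X^\perp = U$ satisfies $\rk(X) = n - \dim U$.

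For the first case, suppose $u < d$ and fix a subspace $U \subseteq \fq^n$ with $\dim U = n-u$. If $X \in \cC_U$, then $U \leq X^\perp$, so $\rk(X) = n - \dim(X^\perp) \leq n - \dim U = u < d$. Since the minimum rank distance of $\cC$ is $d$, the only such $X$ is $X = 0$; hence $|\cC_U| = 1$ for every $U$ of dimension $n-u$, and $B_u(\cC) = 0$. This is the easy direction and requires no machinery beyond the definition of minimum distance.

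For the second case, suppose $u > n - d^\perp$, equivalently $n - u < d^\perp$. Here I would invoke Lemma~\ref{lem:rav}: for a subspace $U$ of dimension $n-u$ we have $|\cC_U| = |\cC|\,|\cC^\perp_{U^\perp}| / q^{m(n-u)}$, and $\dim U^\perp = u$, so $U^\perp$ has dimension $u > n - d^\perp$; that is, $n - \dim U^\perp = n - u < d^\perp$. By exactly the argument of the first case applied to $\cC^\perp$ (any $Y \in \cC^\perp_{U^\perp}$ has $\rk(Y) \leq n - u < d^\perp$), we get $|\cC^\perp_{U^\perp}| = 1$. Therefore $|\cC_U| = |\cC|/q^{m(n-u)} = q^{k - m(n-u)}$ for every such $U$, and summing over the $\qbin{n}{u}{}$ subspaces of dimension $n-u$ (equivalently, counting subspaces of codimension $u$) yields
$$B_u(\cC) = \sum_{\dim U = n-u}\bigl(q^{k-m(n-u)} - 1\bigr) = \bigl(q^{k-m(n-u)} - 1\bigr)\qbin{n}{u}{},$$
where we use that the number of $(n-u)$-dimensional subspaces of $\fq^n$ equals $\qbin{n}{n-u}{} = \qbin{n}{u}{}$.

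I do not anticipate a genuine obstacle here; the only point demanding a little care is the bookkeeping of dimensions versus codimensions — making sure that the condition $u > n - d^\perp$ translates correctly through the duality $U \mapsto U^\perp$ into a statement to which the minimum-distance argument for $\cC^\perp$ applies — and the symmetry $\qbin{n}{u}{} = \qbin{n}{n-u}{}$ used in the final count. The substantive input is Lemma~\ref{lem:rav}, which is already available.
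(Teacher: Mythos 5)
Your proof is correct and takes essentially the same route as the paper's: case one follows from rank–nullity together with the definition of minimum distance, and case two invokes Lemma~\ref{lem:rav} to reduce to the same argument applied to $\cC^\perp$ and $U^\perp$, then sums over the $\qbin{n}{u}{}$ subspaces of dimension $n-u$. The only cosmetic difference is that you separate the two cases more explicitly than the paper does, and correctly note that the passage $u > n - d^\perp \Leftrightarrow n - u < d^\perp$ is exactly what makes the first-case argument apply to the dual.
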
	

\begin{proof}
    Any $X \in \cC$ satisfies $\rk(X^\perp) \leq n-d$, so if $U$ has dimension $n-u > n-d$ then $\cC_U = \{0\}$.
    Similarly, $C^\perp_{U^\perp} = \{0\}$ if $\dim(U^\perp) = u > n - d^\perp$ since every element $X \in \cC^\perp$ has rank at least $d^\perp$. Then from Lemma \ref{lem:rav} we get that
     $$ |\cC_U|= \left\{ \begin{array}{ll}
    1                               & \text{ if } u     < d \\
    q^{k-m(n-u)}  & \text{ if } u     > n-d^\perp \\    
    \end{array}  \right..$$
    Since $\qbin{n}{u}{}$ counts number of subspaces of $\mathbb{F}_q^n$ of dimension $n-u$, the result follows. 
\end{proof}	

We remark that in the instance that $\cC$ is an MRD code, the values $B_u(\cC)$ of Lemma \ref{lem:binmom} were given in \cite{DGMS10}. 
Moreover, we may invoke Lemma \ref{lem:rav} along with the Singleton bound to deduce that if $U$ has dimension $u$ then
$$|\cC_U| \leq |C| / q^{mu} \leq q^{m(n-u-d+1)}.$$

\begin{defn}For $u\geq 0$, the $u$-th normalized binomial moment of $\mathcal{C}$ is defined by
	$$
	\displaystyle{b_u(\mathcal{C}):=\frac{B_{u+d}(\mathcal{C})}{{n \brack u+d}}}.
	$$
\end{defn}

Then $b_u(\mathcal{C})$ counts the average number of non-zero elements of the shortened code $\cC_U$ where $U$ has dimension $n-u-d$.

We extend the definition of $b_u$ to all $u \in \Z$, by setting
$$
b_u(\mathcal{C}) = \begin{cases}
0                &\text{ if } u < 0. \\
q^{k-m(n-u-d)}-1, &\text{ if } u > n-d^\perp-d.
\end{cases}
$$

Note that if $\cC$ is an MRD code of minimum distance $d$ then $k=m(n-d+1)$ and $d^\perp=n+2-d$.
Then $b_u(\cC) = q^{m(u+1)}-1$ for all $u\geq 0$. In particular, the values $b_u(\cC)$ are independent of the minimum distance of $\cC$.
For this reason we write $b_u$ instead of $b_u(\cC)$ in the cases that $\cC$ is MRD.

\begin{defn}\label{def:pu}
   The {\em zeta function} of $\cC$ (c.f. \cite{D04}) is defined by
   $$ Z_{\cC}(T) := (q^m-1)^{-1}\sum_{u \geq 0} b_u(\cC) T^u.$$	
\end{defn}

It is straightforward to check that for $u \notin \{0,...,n-d^\perp-d+2\}  $ we have the following relation among the $b_u$, namely,
\begin{equation}\label{eq:rel}
    b_u(\cC) - (q^m+1)b_{u-1}(\cC)  + q^m b_{u-2}(\cC) =0.
\end{equation}

\begin{defn}For $u\geq 0$, define
	$$
	p_u(\mathcal{C}):= (q^m-1)^{-1}(b_u(\cC)  - (q^m+1)b_{u-1}(\cC)  + q^m b_{u-2}(\cC) ).
	$$
	The {\em zeta polynomial} of $\cC$ is defined by 
	$$P_{\cC}(T):= \sum_{u=0}^{n-d^\perp-d+2} p_u(\cC) T^{u}.$$
\end{defn}
Clearly, for any $\cC$, the zeta polynomial $P_{\cC}(T)$ has degree at most $n-d+1$, and $p_u(\cC)=0$ for any $u > n-d-d^\perp +2$.
We will assume that the minimum distance of the dual code is at least 2, and write
$$P_{\cC} = \sum_{u=0}^{n-d} p_u(\cC) T^{u}. $$
The recursion relates the zeta function and zeta polynomial via
$$Z_{\cC}(T)  = (q^m+1)TZ_{\cC}(T) - q^m T^2 Z_{\cC}(T) + P_{\cC}(T), $$
and hence we obtain that the generating function $Z_{\cC}(T)$ satisfies the equation
$$Z_{\cC}(T) = \frac{P_{\cC}(T)}{(1-T)(1-q^mT)}. $$

This immediately yields the following for an MRD code.

\begin{lem}
	Let $\cC$ be an MRD code. Then $P_{\cC}(T)=1$ and 
	$$Z_{\cC}(T) = \frac{1}{(1-T)(1-q^mT)}. $$
\end{lem}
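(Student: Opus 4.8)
The plan is to read off the claim directly from the two structural facts already assembled in this section: the closed form $Z_{\cC}(T) = P_{\cC}(T)/((1-T)(1-q^mT))$, and the extended values of the normalized binomial moments $b_u(\cC)$ for an MRD code. First I would recall that when $\cC$ is MRD of minimum distance $d$ we have $k = m(n-d+1)$ and $d^\perp = n+2-d$, so the "transition index" is $n-d^\perp-d+2 = 0$. This means the exceptional range $\{0,\dots,n-d^\perp-d+2\}$ in which the recursion \eqref{eq:rel} need not hold collapses to the single value $u=0$, and in fact the excerpt already computes $b_u(\cC) = q^{m(u+1)}-1$ for all $u\ge 0$ in the MRD case (independently of $d$).

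Next I would compute $p_u(\cC)$ for an MRD code from its definition $p_u(\cC) = (q^m-1)^{-1}\bigl(b_u(\cC) - (q^m+1)b_{u-1}(\cC) + q^m b_{u-2}(\cC)\bigr)$. For $u=0$ only the $b_0$ term survives (since $b_{-1}=b_{-2}=0$), giving $p_0(\cC) = (q^m-1)^{-1}b_0(\cC) = (q^m-1)^{-1}(q^m-1) = 1$. For $u\ge 1$ I would substitute $b_u(\cC) = q^{m(u+1)}-1$ and check the telescoping cancellation: the polynomial part contributes $q^{m(u+1)} - (q^m+1)q^{mu} + q^m q^{m(u-1)} = q^{m(u+1)} - q^{m(u+1)} - q^{mu} + q^{mu} = 0$, and the constant parts contribute $-1 - (q^m+1)(-1) + q^m(-1) = -1 + q^m + 1 - q^m = 0$, so $p_u(\cC)=0$ for all $u\ge 1$. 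Hence $P_{\cC}(T) = \sum_u p_u(\cC)T^u = 1$.

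Finally, substituting $P_{\cC}(T) = 1$ into the generating-function identity $Z_{\cC}(T) = P_{\cC}(T)/((1-T)(1-q^mT))$ yields $Z_{\cC}(T) = 1/((1-T)(1-q^mT))$, as claimed; alternatively one can verify this directly by expanding $1/((1-T)(1-q^mT)) = (q^m-1)^{-1}\sum_{u\ge0}(q^{m(u+1)}-1)T^u$ as a partial fraction and matching it against Definition \ref{def:pu}. There is no real obstacle here: the only point requiring a moment's care is confirming that the recursion \eqref{eq:rel} is valid for \emph{every} $u\ge 1$ in the MRD case — i.e. that the exceptional index set really is just $\{0\}$ — which is exactly what the identities $k=m(n-d+1)$, $d^\perp=n+2-d$ guarantee, so that $P_{\cC}(T)$ has no terms beyond the constant one.
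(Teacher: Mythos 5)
Your proof is correct and follows essentially the same route as the paper's: both hinge on the observation that $d+d^\perp=n+2$ for an MRD code collapses the exceptional index range of the recursion \eqref{eq:rel} to $\{0\}$, so $p_u(\cC)=0$ for $u\geq 1$ and $p_0(\cC)=1$. The paper simply asserts this, whereas you carry out the telescoping substitution $b_u(\cC)=q^{m(u+1)}-1$ explicitly; that is a useful verification but not a different argument.
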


\begin{proof}
	$\cC$ is MRD if and only if $d+d^\perp = n+2$, in which case (\ref{eq:rel}) holds for all $u>0$ and $P_{\cC}(T)=1$.
\end{proof}

\section{Weight Enumerators and Zeta Functions}
We will establish a relation between the weight enumerator of a code and its zeta function, giving the rank metric analogue of \cite[Theorem 9.5]{D99}.
We first approach this using M\"{o}bius inversion.
Given a subspace $U$ of $\fq^n$, we have the relation:
\begin{equation}
\displaystyle |\cC_U|=\sum_{U\leq V} |\hat{\cC}_V|,
\label{gfromh}
\end{equation}
consequently, by the M\"obius inversion formula we get
\begin{equation}
 |\hat{\cC}_U|=\sum_{U\leq V}\mu(U,V) |{\cC}_V|,
\label{hfromg}
\end{equation}
for $\mu(U,V)$ defined as in (\ref{eq:mobinv}).

Next, we derive a more explicit description of $W_{\mathcal{C}}(x,y)$ in terms of the normalized binomial moments.

\begin{lem}Let $\mathcal{C}$ have minimum distance $d$ and let be $U$ a subspace of $\mathbb{F}_q^n$ of dimension $u$. Then
$$
 |\widehat{\cC}_U|=\sum_{w=0}^{n-d-u}(-1)^wq^{\binom{w}{2}}\sum_{\begin{array}{c}dim(V)=u+w\\U\leq V\end{array}}( |{\cC}_V|-1)
$$
\label{lemaaux1}
\end{lem}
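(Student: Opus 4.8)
The plan is to start from the M\"obius inversion identity (\ref{hfromg}), namely
$$|\widehat{\cC}_U| = \sum_{U \leq V} \mu(U,V)\,|\cC_V|,$$
and split the sum according to the dimension of $V$. Writing $\dim(V) = u+w$ and using the explicit value of the M\"obius function from (\ref{eq:mobinv}), which gives $\mu(U,V) = (-1)^w q^{\binom{w}{2}}$ whenever $U \leq V$, this becomes
$$|\widehat{\cC}_U| = \sum_{w \geq 0} (-1)^w q^{\binom{w}{2}} \sum_{\substack{\dim(V) = u+w \\ U \leq V}} |\cC_V|.$$
So the content of the lemma is really two things: first, that one may replace $|\cC_V|$ by $|\cC_V| - 1$ inside the inner sum without changing the value; and second, that the outer sum may be truncated at $w = n-d-u$.

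For the first point, the difference introduced by the replacement is $\sum_{w \geq 0} (-1)^w q^{\binom{w}{2}} N_w$, where $N_w = \qbin{n-u}{w}{}$ is the number of subspaces $V$ of dimension $u+w$ containing $U$ (equivalently, $w$-dimensional subspaces of the $(n-u)$-dimensional quotient $\fq^n/U$). I would then invoke the standard $q$-binomial identity $\sum_{w=0}^{N} (-1)^w q^{\binom{w}{2}} \qbin{N}{w}{} = 0$ for $N = n-u \geq 1$ (this is the $q$-analogue of $\sum_w (-1)^w \binom{N}{w} = 0$, and is exactly the statement that the M\"obius function of the subspace lattice sums to zero over an interval of positive length — consistent with (\ref{eq:mobinv}) and the recursion defining $\mu$). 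The only degenerate case is $n - u = 0$, i.e. $U = \fq^n$; but then $\widehat{\cC}_{\fq^n} = \cC_{\fq^n} = \{0\}$ if $d \geq 1$, and the claimed sum is empty, so the identity holds trivially. Thus for $u < n$ the subtraction of $1$ is harmless.

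For the truncation at $w = n-d-u$: every matrix $X \in \cC$ with $X \neq 0$ has $\rk(X) \geq d$, hence $\dim X^\perp \leq n-d$, so $\cC_V = \{0\}$ as soon as $\dim(V) > n-d$, i.e. as soon as $u + w > n-d$. For such $V$ we have $|\cC_V| - 1 = 0$, so the inner sums vanish for $w > n-d-u$ and they may be dropped from the (a priori infinite, but in fact finite since $\qbin{n-u}{w}{} = 0$ for $w > n-u$) sum. Combining the two observations gives precisely
$$|\widehat{\cC}_U| = \sum_{w=0}^{n-d-u} (-1)^w q^{\binom{w}{2}} \sum_{\substack{\dim(V) = u+w \\ U \leq V}} (|\cC_V| - 1),$$
as claimed.

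I do not expect a serious obstacle here; the argument is a routine application of M\"obius inversion on the subspace lattice together with the vanishing of an alternating $q$-binomial sum. The only point requiring a little care is the bookkeeping of which degenerate ranges of $u$ (namely $u$ close to $n$, or $u = n$) need to be checked separately so that the subtraction of $1$ and the upper limit $n-d-u$ of the sum are simultaneously legitimate; once one notes that $\cC_V = \{0\}$ forces the affected terms to be zero anyway, everything is consistent.
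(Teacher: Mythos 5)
Your proof is correct and uses essentially the same approach as the paper: Möbius inversion on the subspace lattice, the observation that $|\cC_V| = 1$ (equivalently $|\cC_V|-1 = 0$) once $\dim V > n-d$, and the vanishing of the alternating $q$-binomial sum $\sum_w (-1)^w q^{\binom{w}{2}}\qbin{n-u}{w}{}$. The only difference is presentational — the paper first splits the range of $\dim V$ and then adds and subtracts $1$, while you subtract $1$ globally first and then truncate — and you additionally make explicit the degenerate case $u=n$, which the paper leaves implicit.
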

\begin{proof}For each subspace $V$, denote its dimension by $v$. By the M\"obius inversion formula, we have
\begin{eqnarray*}\label{eq1}
 |\widehat{\cC}_U|& = &\sum_{U\leq V}(-1)^{v-u}q^{\binom{v-u}{2}}|{\cC}_V|,\\
                  & = &\sum_{U\leq V,v\leq n-d}(-1)^{v-u}q^{\binom{v-u}{2}}|{\cC}_V|  
                    + \sum_{U\leq V,v\geq n-d+1}(-1)^{v-u}q^{\binom{v-u}{2}}|{\cC}_V|\\
                  & = & \sum_{U\leq V,v\leq n-d}(-1)^{v-u}q^{\binom{v-u}{2}}|{\cC}_V|  
                  + \sum_{U\leq V,v\geq n-d+1}(-1)^{v-u}q^{\binom{v-u}{2}},\\  
                  & = & \sum_{v=u}^{n-d}(-1)^{v-u}q^{\binom{v-u}{2}}\sum_{U\leq V} |{\cC}_V|+\sum_{v=n-d+1}^{n}{{n-u}\brack{v-u}}(-1)^{v-u}q^{\binom{v-u}{2}}.
\end{eqnarray*}
using Lemma \ref{lem:binmom}.

Adding and subtracting $1$ to the inner sum of the left hand side, this becomes
\begin{equation}
\sum_{v=u}^{n-d}(-1)^{v-u}q^{\binom{v-u}{2}}\sum_{U\leq V}( |{\cC}_V|-1)+\sum_{v=u}^{n}{{n-u}\brack{v-u}}(-1)^{v-u}q^{\binom{v-u}{2}}.
\label{aux2lema1}
\end{equation}
Setting $v=u+w$, (\ref{aux2lema1}) equals
\begin{equation}
\sum_{w=0}^{n-d-u}(-1)^wq^{\binom{w}{2}}\sum_{U\leq V}( |{\cC}_V|-1)+\sum_{w=0}^{n-u}{{n-u}\brack{w}}(-1)^wq^{\binom{w}{2}}.
\label{aux3lema1}
\end{equation}
Since the right sum vanishes the result follows.
\end{proof}
\begin{prop}Let $\mathcal{C}$ have minimum distance $d$. Then
$$
W_{t}(\mathcal{C})=\sum_{i=d}^t(-1)^{t-i}q^{\binom{t-i}{2}}{n\brack t}{t\brack i}b_{i-d}(\mathcal{C}).
$$
\end{prop}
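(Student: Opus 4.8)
The plan is to count $W_t(\mathcal{C})$ — the number of rank-$t$ matrices in $\mathcal{C}$ — by stratifying according to the null space. Every matrix $X\in\mathcal{C}$ of rank $t$ has a null space $X^\perp$ of dimension $n-t$, and conversely for a fixed subspace $V$ of dimension $n-t$, the matrices in $\mathcal{C}$ with $X^\perp=V$ are exactly the elements of $\widehat{\mathcal{C}}_V$. Hence I would start from the identity
$$
W_t(\mathcal{C})=\sum_{\dim V=n-t}|\widehat{\mathcal{C}}_V|.
$$
Next I would substitute the expression for $|\widehat{\mathcal{C}}_V|$ from Lemma \ref{lemaaux1}, applied with $U=V$ of dimension $u=n-t$, which gives $|\widehat{\mathcal{C}}_V|=\sum_{w=0}^{t-d}(-1)^w q^{\binom{w}{2}}\sum_{V\leq V',\ \dim V'=n-t+w}(|\mathcal{C}_{V'}|-1)$.

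The main technical step is then to interchange the two layers of summation: summing first over all subspaces $V$ of dimension $n-t$ and then over $V'\supseteq V$ of dimension $n-t+w$ is the same as summing first over all $V'$ of dimension $n-t+w$ and then over the subspaces $V\leq V'$ of dimension $n-t$. Each such $V'$ is counted $\qbin{n-t+w}{n-t}{}$ times in the inner count, but — and this is the point I would be careful about — the quantity $|\mathcal{C}_{V'}|-1$ depends only on $V'$, so after the swap the factor $\qbin{n-t+w}{n-t}{}$ comes out, and $\sum_{\dim V'=n-t+w}(|\mathcal{C}_{V'}|-1)=B_{t-w}(\mathcal{C})$ by the definition of the binomial moment. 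This yields
$$
W_t(\mathcal{C})=\sum_{w=0}^{t-d}(-1)^w q^{\binom{w}{2}}\qbin{n-t+w}{n-t}{}B_{t-w}(\mathcal{C}).
$$

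Finally I would reindex by setting $i=t-w$, so $w=t-i$ and $i$ runs from $d$ (when $w=t-d$) to $t$ (when $w=0$); note $B_{t-w}(\mathcal{C})=B_i(\mathcal{C})$ and the lower limit $i\geq d$ is forced since $B_i(\mathcal{C})=0$ for $i<d$ by Lemma \ref{lem:binmom}, which also lets me drop any worry about negative-index terms. It remains to convert the $q$-binomial coefficients and the normalization: I would rewrite $\qbin{n-t+w}{n-t}{}=\qbin{n-i+ (t-i)}{n-t}{}$ and use the standard $q$-binomial identity $\qbin{n}{t}{}\qbin{t}{i}{}=\qbin{n}{i}{}\qbin{n-i}{t-i}{}$ together with $B_i(\mathcal{C})=\qbin{n}{i}{}\,b_{i-d}(\mathcal{C})$ (directly from the definition of the normalized binomial moment) to absorb everything into the claimed form $W_t(\mathcal{C})=\sum_{i=d}^t(-1)^{t-i}q^{\binom{t-i}{2}}\qbin{n}{t}{}\qbin{t}{i}{}b_{i-d}(\mathcal{C})$. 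The only genuinely delicate bookkeeping is the double-counting factor in the sum swap and matching it against the $q$-binomial product identity; the sign and the $q^{\binom{t-i}{2}}$ factor transfer unchanged from Lemma \ref{lemaaux1}.
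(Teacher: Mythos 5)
Your proof follows the same route as the paper's: stratify $W_t(\mathcal{C})$ by nullspace, write it as $\sum_{\dim V=n-t}|\widehat{\mathcal{C}}_V|$, substitute Lemma~\ref{lemaaux1}, interchange the order of summation to pull out the factor $\qbin{n-t+w}{n-t}{}$ and recognize $B_{t-w}(\mathcal{C})$, then reindex and normalize via the $q$-binomial identity $\qbin{n}{t}{}\qbin{t}{i}{}=\qbin{n}{i}{}\qbin{n-i}{t-i}{}$ together with $B_i(\mathcal{C})=\qbin{n}{i}{}\,b_{i-d}(\mathcal{C})$. The bookkeeping (sign, $q^{\binom{t-i}{2}}$, index range, and vanishing of $B_i$ for $i<d$) all checks out, so this is a correct proof and essentially identical to the one in the paper.
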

\begin{proof}
Let $t=n-u$,
and write $\displaystyle W_{n-t}(\mathcal{C})=\sum_{dim(U)=u} |\widehat{\cC}_U|$. Applying Lemma \ref{lemaaux1} and setting $u+w=v$, this equals
$$
\sum_{w=0}^{n-d-u}(-1)^wq^{\binom{w}{2}}\sum_{U\leq V}( |{\cC}_V|-1).
$$
For a fixed $V$, there are ${{u+w}\brack{u}}$ $U's$ contained in $V$ of dimension $u$. Hence
\begin{equation}
W_{t}(\mathcal{C})=\sum_{w=0}^{n-d-u}(-1)^wq^{\binom{w}{2}}{{u+w}\brack{u}}B_{n-(u+w)}(\mathcal{C}),
\end{equation}
which equals
$$
\sum_{w=0}^{t-d}(-1)^wq^{\binom{w}{2}}{{n}\brack{n-t+w}}{{n-t+w}\brack{n-t}}b_{t-w-d}(\mathcal{C})=\sum_{w=0}^{t-d}(-1)^wq^{\binom{w}{2}}{{n}\brack{t}}{{t}\brack{w}}b_{t-w-d}(\mathcal{C}).
$$
Setting $t-w=i$ and observing that ${{t}\brack{w}}={{t}\brack{t-w}}$, the result follows.
\end{proof}

Equivalently, the following statement holds. 
\begin{cor}Let $\mathcal{C}$ have minimum distance $d$. Then 
$$
W_{\mathcal{C}}(x,y)=x^n+\sum_{i=d}^n\left(\sum_{t=i}^n(-1)^{t-i}q^{\binom{t-i}{2}}{{n}\brack{t}}{{t}\brack{i}}x^{n-t}y^t\right)b_{i-d}(\mathcal{C}).
$$
\label{cor1}
\end{cor}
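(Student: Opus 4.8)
The plan is to obtain Corollary~\ref{cor1} as an immediate repackaging of the preceding Proposition. I would start from the definition~(\ref{weightenum}) of the rank-metric weight enumerator, $W_{\mathcal{C}}(x,y)=\sum_{t=0}^{n}W_t(\mathcal{C})\,x^{n-t}y^t$, and first dispose of the low-weight terms. Since $\mathcal{C}$ is an $\fq$-linear code it contains the zero matrix as its unique rank-$0$ element, so $W_0(\mathcal{C})=1$, contributing the term $x^n$; and since $\mathcal{C}$ has minimum rank distance $d$, no nonzero codeword has rank in $\{1,\dots,d-1\}$, so $W_t(\mathcal{C})=0$ for $1\le t\le d-1$. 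Hence $W_{\mathcal{C}}(x,y)=x^n+\sum_{t=d}^{n}W_t(\mathcal{C})\,x^{n-t}y^t$.

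Next I would substitute the formula $W_t(\mathcal{C})=\sum_{i=d}^{t}(-1)^{t-i}q^{\binom{t-i}{2}}{n\brack t}{t\brack i}b_{i-d}(\mathcal{C})$ from the Proposition into the remaining sum, producing a double sum over the triangular index set $\{(i,t):d\le i\le t\le n\}$, with the monomial $x^{n-t}y^t$ attached. Interchanging the order of summation, i.e.\ rewriting ``$\sum_{t=d}^{n}\sum_{i=d}^{t}$'' as ``$\sum_{i=d}^{n}\sum_{t=i}^{n}$'', and pulling $b_{i-d}(\mathcal{C})$, which does not depend on $t$, out of the inner sum yields precisely the asserted identity.

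There is really no obstacle here: the content is entirely in the Proposition, and what remains is the bookkeeping of summation ranges. The only points worth stating explicitly are the two facts used to peel off the $x^n$ term, namely $W_0(\mathcal{C})=1$ and the vanishing of $W_t(\mathcal{C})$ for $1\le t<d$, and the observation that the monomial $x^{n-t}y^t$ depends only on the index $t$, which remains innermost after the interchange, so it is simply carried along.
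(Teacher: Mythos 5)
Your argument is exactly the intended one: the paper presents Corollary~\ref{cor1} as an immediate equivalent restatement of the preceding Proposition, and your careful bookkeeping (peeling off $x^n$ via $W_0=1$ and $W_t=0$ for $1\le t<d$, then interchanging the triangular double sum) is the correct and complete way to spell that out. No gaps.
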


In particular, in the case of an MRD code with weight enumerator $M_{n,d}(x,y)$ we get

\begin{equation}\label{eq:mrd}
 M_{n,d}(x,y)-x^n = \sum_{i=d}^n\left(\sum_{t=i}^n(-1)^{t-i}q^{\binom{t-i}{2}}{{n}\brack{t}}{{t}\brack{i}}x^{n-t}y^t\right)
          (q^{m(i-d+1)}-1).          
\end{equation}

\begin{defn} For each $r \in \{0,...,n\}$ define
	$$\phi_{n,n-r}(x,y):=(q^m-1)^{-1}\left(M_{n,r}(x,y) -(q^m+1) M_{n,r+1}(x,y) +q^m M_{n,r+2}(x,y)\right),$$
	and $$\phi_n(T):= \sum_{r=0}^{n}\phi_{n,r}(x,y)T^r.$$
\end{defn}

Therefore,
\begin{equation}\label{eq:MRDzetawten}
     \frac{\phi_n(T)}{(1-T)(1-q^m T)} \equiv (q^m-1)^{-1} \sum_{r=0}^{n} \left(M_{n,r}(x,y)-x^n \right) T^{n-r} \pmod{T^{n+1}}
\end{equation}

\begin{lem}\label{lem:zetawt}
	For each $r \in \{0,...,d\}$, the coefficient of $T^{n-r}$ in the expression,
 $$Z_{\cC}(T) \phi_n(T)=\frac{P_{\cC}(T)\phi_n(T)}{(1-T)(1-q^m T)}$$
 is given by
 $$ (q^m-1)^{-1} \left(\sum_{i=0}^{n-r} p_i(\cC) M_{n,r+i}(x,y)-x^n \right). $$
 
\end{lem}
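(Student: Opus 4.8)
The plan is to manipulate the generating function identity directly, extracting the coefficient of $T^{n-r}$ by combining what we know about $Z_{\cC}(T)$ and about $\phi_n(T)$. First I would recall from Section 3 that $Z_{\cC}(T) = P_{\cC}(T)/((1-T)(1-q^mT))$, so that
$$ Z_{\cC}(T)\phi_n(T) = \frac{P_{\cC}(T)\phi_n(T)}{(1-T)(1-q^mT)} = P_{\cC}(T)\cdot\frac{\phi_n(T)}{(1-T)(1-q^mT)}. $$
Now I would invoke the key identity (\ref{eq:MRDzetawten}), which tells us that, modulo $T^{n+1}$, the factor $\phi_n(T)/((1-T)(1-q^mT))$ equals $(q^m-1)^{-1}\sum_{s=0}^n (M_{n,s}(x,y)-x^n)T^{n-s}$. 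Since we only care about the coefficient of $T^{n-r}$ with $r\in\{0,\dots,d\}$, i.e.\ about powers $T^{n-r}\le T^n$, working modulo $T^{n+1}$ loses nothing. Thus $Z_{\cC}(T)\phi_n(T)$ agrees, up to the irrelevant high-order terms, with $(q^m-1)^{-1}P_{\cC}(T)\sum_{s=0}^n(M_{n,s}(x,y)-x^n)T^{n-s}$.

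The next step is a Cauchy-product computation: writing $P_{\cC}(T) = \sum_{i\ge 0} p_i(\cC)T^i$ (recall $P_{\cC}$ has degree at most $n-d$ under the standing assumption $d^\perp\ge 2$), the coefficient of $T^{n-r}$ in the product is
$$ (q^m-1)^{-1}\sum_{i+ (n-s) = n-r} p_i(\cC)\,\bigl(M_{n,s}(x,y)-x^n\bigr) = (q^m-1)^{-1}\sum_{i=0}^{?} p_i(\cC)\bigl(M_{n,r+i}(x,y)-x^n\bigr), $$
where the constraint $i + (n-s) = n-r$ forces $s = r+i$. I would then check the range of summation: we need $0\le s\le n$, i.e.\ $0\le r+i\le n$, so $0\le i\le n-r$; and since $p_i(\cC)=0$ for $i>n-d$ while $r\le d$ gives $n-d\le n-r$, the natural upper limit is exactly $i=n-r$. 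This yields
$$ (q^m-1)^{-1}\sum_{i=0}^{n-r} p_i(\cC)\,M_{n,r+i}(x,y)\;-\;(q^m-1)^{-1}\Bigl(\sum_{i=0}^{n-r} p_i(\cC)\Bigr)x^n. $$

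To finish, I must identify $\sum_{i=0}^{n-r} p_i(\cC)$ with $1$ for every $r\in\{0,\dots,d\}$. Since $p_i(\cC)=0$ for $i>n-d$ and $n-r\ge n-d$, this sum equals $\sum_{i=0}^{n-d}p_i(\cC) = P_{\cC}(1)$, independent of $r$. The value $P_{\cC}(1)=1$ should follow from the definition of the $p_u(\cC)$ as the ``second difference'' $(q^m-1)^{-1}(b_u - (q^m+1)b_{u-1} + q^mb_{u-2})$ together with the boundary behaviour of the normalized moments: summing the telescoping combination over $u$ from $0$ to $n-d$ collapses to a boundary term involving $b_{n-d}(\cC)$ and $b_{n-d-1}(\cC)$, and the extended-definition value $b_u(\cC) = q^{k-m(n-u-d)}-1$ for $u$ large, evaluated appropriately (using $b_{-1}=b_{-2}=0$ at the low end), gives $(q^m-1)^{-1}(q^m-1)=1$. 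Concretely, $Z_{\cC}(1)$ would be singular, so instead I would compute $(1-T)(1-q^mT)Z_{\cC}(T)=P_{\cC}(T)$ and evaluate at $T=1$ is not directly available either; the cleanest route is the direct telescoping sum $\sum_u p_u(\cC) = (q^m-1)^{-1}\sum_u (b_u - (q^m+1)b_{u-1}+q^m b_{u-2})$ over a range large enough to capture all nonzero $p_u$, which telescopes to $(q^m-1)^{-1}(b_N - q^m b_{N-1})$ for $N$ large minus the vanishing low-order boundary, and then substituting $b_N(\cC)=q^{k-m(n-N-d)}-1$, $b_{N-1}(\cC)=q^{k-m(n-N+1-d)}-1$ gives $b_N - q^m b_{N-1} = (q^{k-m(n-N-d)}-1) - q^m(q^{k-m(n-N+1-d)}-1) = q^m-1$, hence the sum is $1$.

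\textbf{Main obstacle.} The genuinely delicate point is the bookkeeping at the two ends of the Cauchy product and the telescoping sum: one must verify that truncating $\phi_n(T)/((1-T)(1-q^mT))$ modulo $T^{n+1}$ is harmless for the coefficients in question, that the summation index $s=r+i$ never escapes $\{0,\dots,n\}$ in the relevant range, and — most importantly — that the coefficient of $x^n$ collapses to exactly $x^n$, i.e.\ $P_{\cC}(1)=1$. Everything else is a routine reindexing. I would expect the write-up to spend most of its length pinning down $\sum_{i=0}^{n-r}p_i(\cC)=1$ via the telescoping argument above, using the extended definition of $b_u(\cC)$ for $u>n-d-d^\perp$ and the vanishing $b_u(\cC)=0$ for $u<0$.
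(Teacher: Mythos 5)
Your proposal is correct and follows essentially the same route as the paper's proof: expand $\phi_n(T)/((1-T)(1-q^mT))$ modulo $T^{n+1}$ via (\ref{eq:MRDzetawten}), take the Cauchy product with $P_{\cC}(T)$, and reduce to showing $\sum_{i=0}^{n-r}p_i(\cC)=1$ by telescoping the second-difference definition of $p_u$ down to $(q^m-1)^{-1}\bigl(b_{n-r}(\cC)-q^mb_{n-r-1}(\cC)\bigr)$ and substituting the extended values of $b_u$ valid for $u>n-d^\perp-d$. The only cosmetic difference is that you detour through $P_{\cC}(1)$, whereas the paper telescopes directly with endpoint $N=n-r$; the computation is identical.
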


\begin{proof}
   From (\ref{eq:MRDzetawten}), the coefficient of $T^{n-r}$ in the left-hand-side of the above expression is
   $$ (q^m-1)^{-1}  \left(\sum_{i=0}^{n-r} p_i(\cC) M_{n,r+i}(x,y) -x^n\sum_{i=0}^{n-r} p_i(\cC) \right).$$
   Now from Definition \ref{def:pu} we have that
   $$\sum_{i=0}^{n-r} p_i(\cC)= \frac{b_{n-r}(\cC)-q^m b_{n-r-1}(\cC)}{q^m-1}.$$
   
   Then since $r\leq d$, we have $n-r -1> n-d^\perp-d$, and so $b_{n-r}(\cC)=q^{k-m(r-d)}-1$ and $b_{n-r-1}(\cC)=q^{k-m(r+1-d)}-1$. Therefore
$$\sum_{i=0}^{n-r} p_i(\cC)=\frac{(q^{k-m(r-d)}-1)-q^m(q^{k-m(r+1-d)}-1)}{q^m-1}= 1,$$
proving the claim.  	
\end{proof}

An explicit expression for $\phi_n(T)$ is given by:
\begin{lem}\label{lem:phi}
	$$
	\phi_n(T)=\sum_{i=0}^n\left(\sum_{t=i}^n(-1)^{t-i}q^{\binom{t-i}{2}}{{n}\brack{t}}{{t}\brack{i}}x^ty^{n-t}\right)T^{n-i}.
	$$
\end{lem}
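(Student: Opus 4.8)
The plan is to compute $\phi_n(T)$ directly from its definition, substituting the explicit formula \eqref{eq:mrd} for the weight enumerators $M_{n,r}(x,y)$ and then carefully collecting coefficients of each power of $T$. Recall that $\phi_{n,n-r}(x,y)$ is defined as $(q^m-1)^{-1}$ times the second-order $q$-difference $M_{n,r} - (q^m+1)M_{n,r+1} + q^m M_{n,r+2}$. The key observation is that this is precisely the operation that, applied to the sequence $(M_{n,r})_r$, mirrors the recursion \eqref{eq:rel} satisfied by the $b_u$ in the MRD case; indeed $b_u = q^{m(u+1)}-1$ satisfies $b_u - (q^m+1)b_{u-1} + q^m b_{u-2} = 0$ for $u \geq 2$, while $\phi_{n,r}$ is the corresponding ``defect'' term. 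Since from Corollary \ref{cor1} each $M_{n,r}(x,y) - x^n$ is a fixed $\Q$-linear combination of the basis polynomials
$$e_i(x,y) := \sum_{t=i}^n(-1)^{t-i}q^{\binom{t-i}{2}}\qbin{n}{t}{}\qbin{t}{i}{}x^{n-t}y^t$$
with coefficient $b_{i-r}$ on $e_i$ (setting $b_u = 0$ for $u<0$), the difference operator acts on the scalar coefficients only.

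First I would write, for each $r$,
$$M_{n,r}(x,y) - (q^m+1)M_{n,r+1}(x,y) + q^m M_{n,r+2}(x,y) = (1 - (q^m+1) + q^m)x^n + \sum_{i=0}^n \big(b_{i-r} - (q^m+1)b_{i-r-1} + q^m b_{i-r-2}\big)e_i(x,y),$$
where on the right I have used $b_u = q^{m(u+1)}-1$ valid for all $u \geq 0$ in the MRD setting, together with $b_u = 0$ for $u < 0$. The constant term vanishes since $1-(q^m+1)+q^m = 0$. For the coefficient of $e_i$: when $i - r \geq 2$ the bracket is zero by \eqref{eq:rel}; when $i-r = 1$ it equals $b_1 - (q^m+1)b_0 + 0 = (q^{2m}-1) - (q^m+1)(q^m-1) = 0$; when $i-r = 0$ it equals $b_0 - 0 + 0 = q^m - 1$; and when $i - r < 0$ all three terms vanish. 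Hence $M_{n,r} - (q^m+1)M_{n,r+1} + q^m M_{n,r+2} = (q^m-1)e_r(x,y)$, so that $\phi_{n,n-r}(x,y) = e_r(x,y)$, i.e. $\phi_{n,s}(x,y) = e_{n-s}(x,y)$.

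Then I would assemble the generating function:
$$\phi_n(T) = \sum_{s=0}^n \phi_{n,s}(x,y)T^s = \sum_{s=0}^n e_{n-s}(x,y)T^s = \sum_{i=0}^n e_i(x,y)T^{n-i},$$
reindexing by $i = n-s$. Substituting the definition of $e_i$ gives exactly the claimed formula, up to one cosmetic point: the statement writes $x^ty^{n-t}$ rather than $x^{n-t}y^t$ inside the inner sum. I expect this to be a deliberate variable-swap convention (the $\phi_n$ polynomial pairs with $Z_\cC$ in Lemma \ref{lem:zetawt} in a MacWilliams-type product, where swapping $x \leftrightarrow y$ is natural), and I would either adopt that convention consistently or flag it; it does not affect the substance of the computation. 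The main obstacle — really the only place demanding care — is the bookkeeping at the boundary indices $i - r \in \{0,1\}$ where \eqref{eq:rel} does not apply, and verifying that the $x^n$ terms cancel; both reduce to the one-line identities shown above. No delicate estimates or structural arguments are needed: the lemma is a direct unwinding of definitions once one recognizes that the second-difference operator annihilates the MRD moment sequence past its transient.
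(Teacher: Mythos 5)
Your proof is correct and follows essentially the same route as the paper's: substitute the explicit MRD formula \eqref{eq:mrd}, observe that the second-difference operator annihilates the shifted MRD moments $b_u=q^{m(u+1)}-1$ beyond the boundary, handle the $i-r\in\{0,1\}$ cases directly, and reindex. Two small points of agreement with your commentary: your indexing $b_{i-r}-(q^m+1)b_{i-r-1}+q^m b_{i-r-2}$ is the accurate one (the paper's displayed third sum has a shifted index, though the conclusion is unaffected since both vanish by \eqref{eq:rel}), and the $x^t y^{n-t}$ versus $x^{n-t}y^t$ swap is indeed an inconsistency with \eqref{eq:mrd} rather than a substantive issue, as you flag.
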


\begin{proof}
	By definition, we have,
   	\begin{eqnarray*}
   	   (q^m-1)\phi_{n,n-r}(x,y)=M_{n,r}(x,y) -(q^m+1) M_{n,r+1}(x,y) +q^m M_{n,r+2}(x,y).
   \end{eqnarray*} 
    Then by (\ref{eq:mrd}), we get
    \begin{eqnarray*}
   	     & = & \sum_{t=r}^n(-1)^{t-r}q^{\binom{t-r}{2}}{{n}\brack{t}}{{t}\brack{r}}x^ty^{n-t}(q^m-1) \\
   	     &+&  \sum_{t=r+1}^n(-1)^{t-r-1}q^{\binom{t-r-1}{2}}{{n}\brack{t}}{{t}\brack{r+1}}x^ty^{n-t}(q^{2m}-1 -(q^m+1)(q^m-1))\\
   	     &+& \sum_{i=r+2}^n \sum_{t=i}^n(-1)^{t-i}q^{\binom{t-i}{2}}{{n}\brack{t}}{{t}\brack{i}}x^ty^{n-t}
   	     (b_{i-r+1}-(q^{m}+1)b_{i-r}+q^mb_{i-r-1})\\
   	     & = & \sum_{t=r}^n(-1)^{t-r}q^{\binom{t-r}{2}}{{n}\brack{t}}{{t}\brack{r}}x^ty^{n-t}(q^m-1),
   	\end{eqnarray*}
         as the third sum vanishes due to the relation (\ref{eq:rel}).
\end{proof}

Therefore, the weight enumerators $M_{n,d}(x,y)$ and $W_{\cC}(x,y)$ can be expressed as
\begin{eqnarray}
M_{n,d}(x,y) & = & x^n + \sum_{i=d}^n \phi_{n,n-i}(x,y)b_{i-d},\\
W_{\cC}(x,y) & = & x^n + \sum_{i=d}^n \phi_{n,n-i}(x,y)b_{i-d}(\cC).\label{eq:phizeta}
\end{eqnarray}

In fact, the polynomials $\phi_{n,n-r}(x,y)$ are related to a well known class of $q$-polynomials (see, e.g. \cite{GM04}), and are given by
$$\phi_{n,n-r}(x,y) = \qbin{n}{r}{}p_{n-r}(x,y)y^r, $$
where $p_k(x,y):=\prod_{j=0}^{k-1}(x-q^jy) = \sum_{j=0}^k (-1)^{k-j} q^{\binom{k-j}{2}} \qbin{k}{j}{} x^j y^{k-j}$.
Therefore,
\begin{eqnarray}
W_{\cC}(x,y) & = & x^n + \sum_{i=d}^n b_{i-d}(\cC)\qbin{n}{i}{}p_{n-i}(x,y)y^i.\label{eq:phizeta}
\end{eqnarray}

We now establish a connection between the weight enumerator of a code and its zeta function.

\begin{thm}
	The coefficient of $T^{n-d}$ in the expression,
	$$Z_{\cC}(T) \phi_n(T)=\frac{P_{\cC}(T)\phi_n(T)}{(1-T)(1-q^m T)}$$
	is given by
	$$ \frac{W_{\cC}(x,y)-x^n}{q^m-1} . $$
	In particular, the zeta polynomial $P_{\cC}(T)$ is the unique polynomial of degree at most $n-d$
	such that $\displaystyle{W_{\cC}(x,y) = \sum_{i=0}^{n-d} p_i M_{n,d+i}(x,y)}$.
\label{th49}
\end{thm}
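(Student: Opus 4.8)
The plan is to apply Lemma~\ref{lem:zetawt} at the specific value $r = d$, where the generic formula is designed to collapse onto the weight enumerator of $\cC$. First I would specialize Lemma~\ref{lem:zetawt} to $r=d$: its hypothesis $r \in \{0,\dots,d\}$ is satisfied, so the coefficient of $T^{n-d}$ in $Z_{\cC}(T)\phi_n(T)$ equals $(q^m-1)^{-1}\bigl(\sum_{i=0}^{n-d} p_i(\cC) M_{n,d+i}(x,y) - x^n\bigr)$, using that the sum of the relevant $p_i(\cC)$ is $1$. It then remains to identify $\sum_{i=0}^{n-d} p_i(\cC) M_{n,d+i}(x,y)$ with $W_{\cC}(x,y)$. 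Here I would use the expansion $M_{n,d+i}(x,y) = x^n + \sum_{j=d+i}^n \phi_{n,n-j}(x,y)\, b_{j-d-i}$ from the displayed identity just before the theorem, together with the analogous expansion $W_{\cC}(x,y) = x^n + \sum_{j=d}^n \phi_{n,n-j}(x,y)\, b_{j-d}(\cC)$ in~\eqref{eq:phizeta}; matching coefficients of each $\phi_{n,n-j}(x,y)$ reduces the claim to the identity $b_{u}(\cC) = \sum_{i} p_i(\cC)\, b_{u-i}$, which is exactly the defining recursion $p_u(\cC) = (q^m-1)^{-1}(b_u - (q^m+1)b_{u-1} + q^m b_{u-2})$ unrolled via $Z_{\cC}(T)(1-T)(1-q^mT) = P_{\cC}(T)$.

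More directly, I would phrase this last step as a generating-function manipulation: write $W_{\cC}(x,y) - x^n = (q^m-1)\sum_{i\ge 0} b_i(\cC) \phi_{n,n-d-i}(x,y)$-style coefficients and observe that $Z_{\cC}(T)\phi_n(T)$, expanded as a power series in $T$, has its $T^{n-d}$-coefficient equal to $(q^m-1)^{-1}\sum_{r} b_r(\cC)\bigl(M_{n,r+d}(x,y)-x^n\bigr)$ after shifting indices through \eqref{eq:MRDzetawten} and Definition~\ref{def:pu}. Comparing with Corollary~\ref{cor1} (equivalently \eqref{eq:phizeta}) then yields $W_{\cC}(x,y)-x^n$ on the nose. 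For the uniqueness and the final ``in particular'' assertion: the polynomials $M_{n,d},\dots,M_{n,n}$ are linearly independent over $\Q$ (noted in Section~2), and $W_{\cC}(x,y)$ is expressible as a $\Q$-linear combination of them whenever $d^\perp \ge 2$; hence the coefficients $(p_0,\dots,p_{n-d})$ are determined, and by the above they coincide with the coefficients of $P_{\cC}(T)$, which has degree at most $n-d$ by the remark following Definition~\ref{def:pu}.

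The main obstacle I anticipate is bookkeeping the index shifts cleanly — the $T^{n-d}$-coefficient extraction interacts with three separate reindexings (the $T \mapsto T^{n-r}$ convention in $\phi_n$, the shift $u \mapsto u+d$ built into $b_u(\cC)$, and the $v = u+w$ substitutions used in deriving the $\phi_{n,n-r}$ expansions), and one must be careful that the extended definition of $b_u(\cC)$ for $u > n - d^\perp - d$ is what makes the finite sum $\sum_{i=0}^{n-d} p_i(\cC)$ telescope to $1$ rather than to something depending on $d$. Once the correspondence $W_{\cC} \leftrightarrow (p_i(\cC))$ is pinned down, the uniqueness is immediate from linear independence, so no further work is needed there.
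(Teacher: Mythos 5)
Your main route is correct and uses the same two ingredients as the paper (Lemma~\ref{lem:zetawt} and the $\phi$-expansions), but organizes them in the reverse order. The paper's proof is slightly more economical: it reads off the coefficient of $T^{n-d}$ in $Z_{\cC}(T)\phi_n(T)$ \emph{directly} from \eqref{eq:phizeta} — since $Z_{\cC}(T)\phi_n(T) = (q^m-1)^{-1}\bigl(\sum_u b_u(\cC)T^u\bigr)\bigl(\sum_r \phi_{n,r}T^r\bigr)$, the $T^{n-d}$-coefficient is $(q^m-1)^{-1}\sum_{u\ge 0} b_u(\cC)\phi_{n,n-d-u}(x,y)$, which by \eqref{eq:phizeta} is exactly $(q^m-1)^{-1}(W_{\cC}(x,y)-x^n)$ — and then equates this with the output of Lemma~\ref{lem:zetawt} at $r=d$. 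You instead get the Lemma output first and then prove $\sum_i p_i(\cC) M_{n,d+i} = W_{\cC}$ separately via the convolution $b_u(\cC)=\sum_i p_i(\cC) b_{u-i}$, which does follow from $Z_{\cC}(T)(1-T)(1-q^mT)=P_{\cC}(T)$ together with $\sum_u b_u T^u = (q^m-1)/\bigl((1-T)(1-q^mT)\bigr)$; that extra step is sound (and also needs $\sum_i p_i(\cC)=1$, which you correctly invoke). One local inaccuracy to flag: in your ``more directly'' paragraph, the $T^{n-d}$-coefficient of $Z_{\cC}(T)\phi_n(T)$ is \emph{not} $(q^m-1)^{-1}\sum_r b_r(\cC)\bigl(M_{n,r+d}(x,y)-x^n\bigr)$ — that would double-count the MRD moments, since $M_{n,r+d}-x^n$ itself expands as $\sum_u b_u\,\phi_{n,n-r-d-u}$; the correct expression is $(q^m-1)^{-1}\sum_u b_u(\cC)\phi_{n,n-d-u}$, and it is \emph{this} that matches $W_{\cC}-x^n$ by \eqref{eq:phizeta}. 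Also note the spurious $(q^m-1)$ in your restatement of \eqref{eq:phizeta}. The uniqueness argument from linear independence of the $M_{n,d+i}$ is the same as the paper's.
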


\begin{proof}
	It is immediate from (\ref{eq:phizeta}) that the coefficient of $T^{n-d}$ in $Z_{\cC}(T) \phi_n(T)$ is
	$(q^m-1)^{-1}(W_{\cC}(x,y)-x^n)$. From Lemma \ref{lem:zetawt}, we have that this coefficient is equal to $(q^m-1)^{-1}(-x^n+ \sum_{i=0}^{n-d} p_i M_{n,d+i}(x,y))$, and thus 
	$\displaystyle{W_{\cC}(x,y) = \sum_{i=0}^{n-d} p_i(\cC) M_{n,d+i}(x,y)}$.
	Since the $M_{n,d+i}(x,y)$ are linearly independent over $\Q$, the result follows.
\end{proof}	

\begin{example}\label{exjohn}
Let us consider $m=n=3$. Then we have the following weight enumerators for MRD codes
\begin{align*}
M_{3,3} &= x^3+(q^3-1)y^3\\
M_{3,2} &= x^3+(q^3-1)(q^2+q+1)xy^2+(q^3-1)(q^3-q^2-q)y^3\\
M_{3,1} &= x^3+(q^3-1)(q^2+q+1)x^2y+(q^3-1)^2(q^2+q)xy^2+(q^3-1)(q^3-q)(q^3-q^2)y^3\\
\end{align*}
Taking $q=2$ we get
\begin{align*}
M_{3,3} &= x^3+7y^3\\
M_{3,2} &= x^3+49xy^2+14y^3\\
M_{3,1} &= x^3+49x^2y+294xy^2+168y^3\\
\end{align*}
We formally define $M_{3,4} = x^n$, the weight enumerator of the trivial code. Consider a code $\cC$ of {\it constant rank} $2$; that is, every nonzero codeword in $\cC$ has rank-weight two. If $\cC$ has dimension $k$, then $W_{\cC}(x,y) =x^3+(q^k-1)xy^2$. Such codes always exist for $k=3$, and exist with $k=4$, $q=2$. For $k=3$, we have a code $\cC_1$ withe weight enumerator
\[
W_{\cC_1}(x,y) = \frac{1}{q^2+q+1}\left(M_{3,2}-(q^3-q^2-q)M_{3,3}+q^3M_{3,4}\right),
\]
and hence
\[
P_{\cC_1}(T) = \frac{1-(q^3-q^2-q)T+q^3T^2}{q^2+q+1}.
\]
For $k=4$ and $q=2$, we get a code $\cC_2$ with zeta polynomial
\[
P_{\cC_2}(T) = \frac{1}{49}\left(15-30T+64T^2\right).
\]
This reflects the fact that
\[
x^3+15xy^2 = \frac{1}{49}\left(15(x^3+49xy^2+14y^3)-30(x^3+7y^3)+64x^4\right).
\]

From considering an MRD code of $3\times 3$ matrices with minimum distance $2$ over $\mathbb{F}_{q^2}$ as a subspace of $6\times 6$ matrices over $\mathbb{F}_q$, we produce a code $\cC_3$ with weight enumerator
\[
W_{\cC_3}(x,y) =x^6+(q^6-1)(q^4+q^2+1)x^2y^4+(q^6-1)(q^6-q^4-q^2)y^6,
\]
and zeta polynomial
\[
P_{\cC_3}(T) = \frac{1+ (-q^6 + q^4 + q^3 + q^2 + q)T + q^6T^2}{q^4 + q^3 + q^2 + q + 1}.
\]
Taking $q=2$ we get
\[
P_{\cC_3}(T) = \frac{1-34T + 64T^2}{31}.
\]

\end{example}

\section{Duality and an Upper Bound}
A number of authors have described the duality theory of rank metric codes \cite{D78,GY07,R16}.
In \cite{GY07}, the authors give a direct analogue of MacWilliams' duality theorem relating the weight enumerator of a code with that of its dual, or more precisely, with the $q$-transform of its dual.
Define an operation $\tau$ on homogeneous polynomials in two variables by
\begin{equation}
f^{\tau}(x,y):=\tilde{f}(x+(q^m-1)y,x-y),
\label{tau}
\end{equation}
where $\tilde{f}$ is the $q$-transform of $f$, which is a $\Q$-linear map. The following version of 
MacWilliams' duality theorem for rank metric codes can be read in \cite[Theorem~1]{GY07}. 
\begin{thm}\label{th:mw}
$$
W_{\mathcal{C}^{\perp}}(x,y)=\frac{1}{|\mathcal{C}|}W^{\tau}_{\mathcal{C}}(x,y).
$$
\label{macwilliams}
\end{thm}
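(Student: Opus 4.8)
The plan is to prove the identity by computing both $W_{\cC}(x,y)$ and $W_{\cC^{\perp}}(x,y)$ in the common basis $\{g_r\}_{r=0}^{n}$ of the space of homogeneous polynomials of degree $n$ in $x,y$, where $g_r(x,y):=y^r p_{n-r}(x,y)$, transporting the binomial-moment duality of Lemma~\ref{lem:rav} through this basis, and then identifying the linear map it induces with $\tfrac{1}{|\cC|}(\,\cdot\,)^{\tau}$. Since the leading term (in decreasing powers of $x$) of $g_r$ is $x^{n-r}y^r$, the $g_r$ are triangular with respect to the monomials and hence form a basis; because $\tau$ is $\Q$-linear, it is then enough to know how $\tau$ acts on each $g_r$.

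First I would assemble three ingredients. Using Corollary~\ref{cor1}, the vanishing $B_i(\cC)=0$ for $i<d$ from Lemma~\ref{lem:binmom}, and the standard $q$-binomial identity $x^n=\sum_{r=0}^{n}\qbin{n}{r}{}y^r p_{n-r}(x,y)$ (equivalently $x^n=\sum_r \qbin{n}{r}{}\prod_{j=0}^{n-r-1}(x-q^j)$ after setting $y=1$), one rewrites the weight enumerator compactly as
$$W_{\cC}(x,y)=\sum_{r=0}^{n}\overline{B}_r(\cC)\,g_r(x,y),\qquad \overline{B}_r(\cC):=B_r(\cC)+\qbin{n}{r}{}=\sum_{\dim U=n-r}|\cC_U|.$$
Next, summing the equality of Lemma~\ref{lem:rav} over all $U$ of dimension $n-r$ and using the bijection $U\mapsto U^{\perp}$ between $(n-r)$- and $r$-dimensional subspaces of $\fq^n$ gives the moment duality
$$q^{m(n-r)}\,\overline{B}_r(\cC)=|\cC|\,\overline{B}_{n-r}(\cC^{\perp}).$$
Finally there is the key lemma that $\tau$ acts on the basis by
$$\big(y^r p_{n-r}(x,y)\big)^{\tau}=q^{m(n-r)}\,y^{n-r}p_r(x,y),\qquad\text{i.e.}\qquad (g_r)^{\tau}=q^{m(n-r)}g_{n-r}.$$
Granting these, the theorem is immediate: applying $\tau$ to the expansion of $W_{\cC}$ and then invoking the key lemma and the moment duality,
$$W_{\cC}^{\tau}=\sum_{r}\overline{B}_r(\cC)(g_r)^{\tau}=\sum_{r}q^{m(n-r)}\overline{B}_r(\cC)\,g_{n-r}=\sum_{r}|\cC|\,\overline{B}_{n-r}(\cC^{\perp})\,g_{n-r}=|\cC|\sum_{s}\overline{B}_s(\cC^{\perp})g_s=|\cC|\,W_{\cC^{\perp}}.$$

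The hard part will be the key lemma, i.e.\ pinning down the action of $\tau$ on the $g_r$. This requires substituting the explicit form of the $q$-transform $\widetilde{(\cdot)}$ (as in \cite{GY07,GM04}) into definition~(\ref{tau}), carrying out the linear substitution $x\mapsto x+(q^m-1)y$, $y\mapsto x-y$, and expanding $p_{n-r}$ and $p_r$ by their $q$-binomial expansions; the resulting sum collapses by a Gaussian-binomial ($q$-Chu--Vandermonde) identity. A helpful consistency check is that $\tau^2=q^{mn}\,\mathrm{id}$ on degree-$n$ forms — transparent because the substitution matrix with rows $(1,q^m-1)$ and $(1,-1)$ squares to $q^m$ times the identity — which matches $(g_r)^{\tau\tau}=q^{m(n-r)}q^{mr}g_r=q^{mn}g_r$; moreover it is cheap to check the lemma at the endpoints $r=0,n$ against the MacWilliams pair $\cC=\{0\}$, $\cC^{\perp}=\fq^{m\times n}$.

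Alternatively, one can sidestep any direct manipulation of $\tau$: combining the first two ingredients already writes $W_{\cC^{\perp}}$ as the explicit combination $\tfrac{1}{|\cC|}\sum_r q^{m(n-r)}\overline{B}_r(\cC)\,g_{n-r}$, and one need only recognize, by evaluating definition~(\ref{tau}) on the basis $\{g_r\}$, that this map is exactly $f\mapsto \tfrac{1}{|\cC|}f^{\tau}$. This shows that nothing about $\cC$ beyond $|\cC|$ enters, though it concentrates the remaining work in the same basis computation.
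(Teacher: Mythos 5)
The paper does not prove this theorem: it quotes it as \cite[Theorem~1]{GY07} and uses it as a known result, so there is no in-paper argument to match yours against. Your sketch is therefore an independent attempt, and most of its scaffolding is correct. Using (\ref{eq:phizeta}) together with $B_i(\cC)=b_{i-d}(\cC)\qbin{n}{i}{}$ gives $W_{\cC}=x^n+\sum_{i\ge d}B_i(\cC)\,y^i p_{n-i}(x,y)$; the vanishing of $B_i$ for $i<d$ lets the sum start at $i=0$; and the (correct) $q$-identity $x^n=\sum_{r}\qbin{n}{r}{}y^r p_{n-r}(x,y)$ then produces exactly your expansion $W_{\cC}=\sum_r\overline{B}_r(\cC)g_r$ with $\overline{B}_r(\cC)=\sum_{\dim U=n-r}|\cC_U|$. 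Summing Lemma~\ref{lem:rav} over $(n-r)$-dimensional $U$ and reindexing by $U\mapsto U^\perp$ gives your moment duality $q^{m(n-r)}\overline{B}_r(\cC)=|\cC|\,\overline{B}_{n-r}(\cC^\perp)$; that part is clean and genuinely self-contained. The final chain of equalities is also correct, conditional on the key lemma.

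The gap is the key lemma itself, $(g_r)^{\tau}=q^{m(n-r)}g_{n-r}$, which you state but do not prove and which carries the entire content of the $\tau$-transform. This is more than a deferred computation: the paper never actually defines the $q$-transform $\tilde f$ appearing in (\ref{tau}); it only says $\tilde f$ is $\Q$-linear. Without importing the explicit formula from \cite{GY07} (or the $q$-product machinery in \cite{GM04}), the substitution-and-$q$-Vandermonde collapse you describe cannot even be set up. Your stated sanity check $\tau^2=q^{mn}\,\mathrm{id}$ is likewise not as transparent as claimed: that the substitution matrix squares to $q^m I$ handles the linear change of variables, but $\tau$ is the composition of that substitution with the $q$-transform, so you would also need to know how $\tilde{(\cdot)}$ interacts with itself and with linear substitutions, which is again the missing definition. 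In short, you have correctly and nontrivially reduced the MacWilliams identity to the single basis formula $(g_r)^{\tau}=q^{m(n-r)}g_{n-r}$, with the moment-duality half done in full; as written, though, the proof is incomplete and cannot be finished inside the paper's own notation.
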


If $W_{\mathcal{C}}=W_{\mathcal{C}^{\perp}}$, $\mathcal{C}$ is called formally self-dual.
Delsarte \cite{D78} showed that the dual code of an MRD code is also MRD; the dual of an MRD code with minimum distance $d$ is an MRD code with minimum distance $n-d+2$. Since an MRD code exists for all choices of $m,n,d$, 
the family of MRD weight enumerators is closed under $\tau$ and hence the MacWilliams' identity. 
More precisely, an immediate corollary of Theorem \ref{th:mw} and the previous observation yields that:
\begin{cor}
$$
M_{n,d}^{\tau}(x,y)=q^{m(n-d+1)}M_{n,n-d+2}(x,y).
$$
\label{taumcwilliams}
\end{cor}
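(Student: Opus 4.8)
The plan is to apply the MacWilliams duality theorem (Theorem~\ref{th:mw}) to an MRD code. Fix $m,n$ with $n\le m$ and $d\in\{1,\dots,n+1\}$; by the existence result of Delsarte \cite{D78} there is an MRD code $\cC\subseteq\fq^{m\times n}$ of minimum rank distance $d$, so $|\cC|=q^{m(n-d+1)}$ and (again by \cite{D78}) its weight enumerator is the uniquely determined polynomial $W_{\cC}(x,y)=M_{n,d}(x,y)$. Moreover Delsarte showed $\cC^\perp$ is also MRD, with minimum distance $d^\perp=n-d+2$, so $W_{\cC^\perp}(x,y)=M_{n,n-d+2}(x,y)$. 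Substituting these two identifications into Theorem~\ref{th:mw} gives
$$
M_{n,n-d+2}(x,y)=W_{\cC^\perp}(x,y)=\frac{1}{|\cC|}W_{\cC}^{\tau}(x,y)=\frac{1}{q^{m(n-d+1)}}M_{n,d}^{\tau}(x,y),
$$
and multiplying both sides by $q^{m(n-d+1)}$ yields exactly the claimed formula $M_{n,d}^{\tau}(x,y)=q^{m(n-d+1)}M_{n,n-d+2}(x,y)$.

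The only points that need a word of justification are the edge cases of the index $d$. For $d=1$ the MRD ``code'' is the whole space $\fq^{m\times n}$, whose weight enumerator is $M_{n,1}$, and its dual is the zero code with enumerator $x^n=M_{n,n+1}$; one checks these fit the convention $M_{n,n+1}(x,y)=x^n$ used implicitly in the paper (cf.\ the formal definition $M_{3,4}=x^n$ in Example~\ref{exjohn}), and similarly at the other extreme $d=n+1$. With $1\le d\le n$ and $2\le n-d+2\le n+1$, all the enumerators appearing are genuine MRD enumerators (or the trivial one) covered by the formulas in Section~2, so no case is lost.

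I do not anticipate a real obstacle here: the statement is an immediate corollary, as the paper itself advertises. The one thing to be careful about is bookkeeping with the operator $\tau$ and the swap $x\leftrightarrow$ the two linear forms in its definition \eqref{tau}, together with the normalization of the $q$-transform $\tilde f$; it would be prudent to double-check on the small example $m=n=d$ from Example~\ref{exjohn} (e.g.\ $M_{3,3}^{\tau}$ should equal $q^{3m}M_{3,1}$) that the constant $q^{m(n-d+1)}$ comes out on the correct side and that the identification $d^\perp=n-d+2$ is consistent with the degree bookkeeping. Beyond that verification, the argument is simply: existence of MRD codes in all parameters, the Delsarte duality of MRD codes, and one substitution into Theorem~\ref{th:mw}.
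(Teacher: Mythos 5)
Your proposal is correct and follows essentially the same route as the paper: invoke the existence of MRD codes for all parameters, use Delsarte's result that the dual of an MRD code of minimum distance $d$ is MRD of minimum distance $n-d+2$, and substitute into Theorem~\ref{th:mw} using $|\cC|=q^{m(n-d+1)}$. The extra discussion of the edge cases $d=1$ and $d=n+1$ and the suggested sanity check on Example~\ref{exjohn} are sensible but not part of the paper's (very brief) argument.
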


\begin{cor}Let $\mathcal{C}\subseteq\mathbb{F}_q^{m\times n}$ be a matrix code with minimal rank distance $d$ and dimension $k$. Let $d^{\perp}$ be the minimal rank distance of the dual code $\mathcal{C}^{\perp}$. Setting
$$
W_{\mathcal{C}}(x,y)=\sum_{i=0}^rp_iM_{n,d+i}\mbox{ and }W_{\mathcal{C^{\perp}}}(x,y)=\sum_{j=0}^ts_jM_{n,d^{\perp}+j},
$$
we have that
\begin{itemize}
\item[a)] $r=t=n-d-d^{\perp}+2$;
\item[b)] $s_j=p_{r-j}q^{m(d^{\perp}+j-1)-k}$;
\end{itemize}
\end{cor}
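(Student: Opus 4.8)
The plan is to route everything through the MacWilliams identity of Theorem~\ref{th:mw}, combined with the behaviour of the operator $\tau$ on MRD weight enumerators recorded in Corollary~\ref{taumcwilliams}. Write $N:=n-d-d^{\perp}+2$. By Theorem~\ref{th49} we have $W_{\cC}(x,y)=\sum_{i=0}^{n-d}p_iM_{n,d+i}(x,y)$, and since $p_i=p_i(\cC)=0$ for $i>N$ (as noted after the definition of the zeta polynomial, via the recursion (\ref{eq:rel})), this is in fact $W_{\cC}(x,y)=\sum_{i=0}^{N}p_iM_{n,d+i}(x,y)$. Applying $\tau$, using that the $q$-transform, hence $\tau$, is $\Q$-linear, that $M_{n,d+i}^{\tau}=q^{m(n-d-i+1)}M_{n,n-d-i+2}$ by Corollary~\ref{taumcwilliams}, and then dividing by $|\cC|=q^{k}$ as prescribed by Theorem~\ref{th:mw}, one obtains
$$W_{\cC^{\perp}}(x,y)=\sum_{i=0}^{N}p_i\,q^{m(n-d-i+1)-k}\,M_{n,\,n-d-i+2}(x,y).$$
Re-indexing by $j:=n-d-i+2-d^{\perp}$, so that $i=N-j$ and $n-d-i+1=d^{\perp}+j-1$, this becomes
$$W_{\cC^{\perp}}(x,y)=\sum_{j=0}^{N}p_{N-j}\,q^{m(d^{\perp}+j-1)-k}\,M_{n,\,d^{\perp}+j}(x,y).$$

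Next I would invoke uniqueness of the MRD-expansion: since the $M_{n,r}(x,y)$ are $\Q$-linearly independent, comparing the last display with $W_{\cC^{\perp}}(x,y)=\sum_{j=0}^{t}s_jM_{n,d^{\perp}+j}(x,y)$ forces
$$s_j=p_{N-j}\,q^{m(d^{\perp}+j-1)-k}\qquad\text{for all }j\ge 0,$$
with the conventions $s_j=0$ for $j>t$ and $p_{N-j}=0$ for $j>N$. This is exactly part~(b), once we know $r=N$. It also shows $t\le N$; running the identical argument with $\cC$ and $\cC^{\perp}$ interchanged (recall $\cC^{\perp\perp}=\cC$, that $\cC^{\perp}$ has dimension $mn-k$ and minimum distance $d^{\perp}$, and that its dual $\cC$ has minimum distance $d$) gives $r\le N$ as well.

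For the equalities in part~(a) it remains to rule out cancellation at the top, that is, to show $p_N\ne 0$ and $s_N\ne 0$. From the displayed formula $s_N=p_0\,q^{m(d^{\perp}+N-1)-k}$ and $p_N=s_0\,q^{k-m(d^{\perp}-1)}$, so it suffices to check $p_0(\cC)\ne 0$ and $p_0(\cC^{\perp})\ne 0$; this is the only genuinely non-formal input. By Definition~\ref{def:pu} and the definition of the normalized binomial moments, $p_0(\cC)=(q^m-1)^{-1}b_0(\cC)=\big((q^m-1)\qbin{n}{d}{}\big)^{-1}B_d(\cC)$, and $B_d(\cC)=\sum_{\dim U=n-d}(|\cC_U|-1)>0$ because $\cC$ has minimum distance $d$: picking $X\in\cC$ of rank exactly $d$, its right kernel $X^{\perp}$ has dimension $n-d$ and $X\in\cC_{X^{\perp}}$, so $|\cC_{X^{\perp}}|\ge 2$. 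The same argument for $\cC^{\perp}$ gives $s_0=p_0(\cC^{\perp})\ne 0$. Hence $r=t=N$, which is (a), and then (b) is the displayed identity.

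I expect the only real hazards to be clerical: keeping the powers of $q$ straight through the substitution $i\mapsto N-j$ (tracking $q^{m(n-d-i+1)-k}$ to $q^{m(d^{\perp}+j-1)-k}$), and using both a priori degree bounds in the right place, namely $\deg P_{\cC}\le n-d$ from Theorem~\ref{th49} and $p_u=0$ for $u>N$ from the recursion (\ref{eq:rel}). If one wants to include the case $d=1$ or $d^{\perp}=1$, one should carry along the convention $M_{n,n+1}(x,y)=x^n$ for the trivial code, as in Example~\ref{exjohn}; otherwise, with $d,d^{\perp}\ge 2$ as assumed elsewhere, no boundary case arises.
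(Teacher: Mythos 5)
Your proof follows the same route as the paper's: apply MacWilliams' identity (Theorem~\ref{th:mw}) together with the MRD transform (Corollary~\ref{taumcwilliams}) to the expansion from Theorem~\ref{th49}, re-index by $j=n-d-i+2-d^{\perp}$, and read off the coefficients by linear independence of the $M_{n,r}$. The one thing you add that the paper leaves implicit is the justification of the endpoint nonvanishing for part~(a): you note $p_N\neq 0\iff s_0\neq 0$ and $s_N\neq 0\iff p_0\neq 0$, and then verify $p_0(\cC)=\bigl((q^m-1)\qbin{n}{d}{}\bigr)^{-1}B_d(\cC)>0$ (and likewise for $\cC^{\perp}$) by exhibiting a codeword of rank $d$, which is a genuine and correct completion of a step the paper's proof only asserts.
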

\begin{proof}Applying MacWilliams' identity to $W_{\mathcal{C}}(x,y)$ and using Corollary \ref{taumcwilliams} gives
$$
W_{\mathcal{C}^{\perp}}(x,y)=\sum_{i=0}^rq^{m(n-d-i+1)-k}p_iM_{n,n-d-i+2}.
$$
Setting $d^{\perp}+j=n-d-i+2$, its minimal value ($j=0$) happens when $i=n-d-d^{\perp}+2$ and its maximal value ($i=0$) when $j=n-d-d^{\perp}+2$. Hence $a)$ follows. Claim $b)$ follows from the fact that the MRD family is a $\mathbb{Q}$-basis, hence the decomposition of $W_{\mathbb{C}^{\perp}}$ as a linear combination of MRD polynomials of different minimal distances must be unique.
\end{proof}

The MacWilliams' identity can be translated into a functional relation between the zeta function of a code and that of its dual 
(or just between the zeta function of the code at different arguments, if the code is formally self dual),
as we now show.

\begin{thm}
$$
Z_{\mathcal{C}^{\perp}}(T)=q^{m(n-d+1)+k}T^{r-2}Z_{\mathcal{C}}\left(\frac{1}{q^mT}\right).
$$
In particular, if $\mathcal{C}$ is formally self-dual, then
$$
Z_{\mathcal{C}}(T)=q^{m(n-d+1)+k}T^{r-2}Z_{\mathcal{C}}\left(\frac{1}{q^mT}\right).
$$
\label{th54}
\end{thm}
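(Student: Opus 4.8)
The plan is to reduce the functional equation for the zeta \emph{function} to the corresponding relation between the zeta \emph{polynomials} $P_\cC$ and $P_{\cC^\perp}$ — which is essentially repackaged from the Corollary preceding this theorem — and then to handle separately the elementary way the factor $(1-T)(1-q^mT)$ transforms under $T\mapsto 1/(q^mT)$.

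First I would recall from Section~3 the identity $Z_\cC(T)=P_\cC(T)/\big((1-T)(1-q^mT)\big)$, valid for every code with $d^\perp\ge 2$, and likewise $Z_{\cC^\perp}(T)=P_{\cC^\perp}(T)/\big((1-T)(1-q^mT)\big)$. By Theorem~\ref{th49} the coefficients $p_i$ of $P_\cC$ are precisely the coefficients expressing $W_\cC$ in the MRD basis $\{M_{n,d+i}\}$, so the Corollary above applies verbatim: with $P_\cC(T)=\sum_{i=0}^{r}p_iT^i$ and $P_{\cC^\perp}(T)=\sum_{j=0}^{r}s_jT^j$, where $r=n-d-d^\perp+2$, we have $s_j=q^{m(d^\perp+j-1)-k}\,p_{r-j}$.

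Next I would reindex $P_{\cC^\perp}(T)=\sum_j s_jT^j$ by $i=r-j$ and pull out the factor independent of the summation index. Since $d^\perp+r-1=n-d+1$, this should collapse to
\[
P_{\cC^\perp}(T)=q^{\,m(n-d+1)-k}\,T^{r}\,P_\cC\!\big(1/(q^mT)\big),
\]
and this single reindexing is where essentially all of the (mild) work sits — the one thing to watch is tracking the exponents of $q$. A one-line computation then gives $\big(1-\tfrac1{q^mT}\big)\big(1-\tfrac1{T}\big)=(1-T)(1-q^mT)/(q^mT^2)$, hence $Z_\cC\!\big(1/(q^mT)\big)=q^mT^2\,P_\cC\!\big(1/(q^mT)\big)/\big((1-T)(1-q^mT)\big)$. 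Dividing the displayed identity for $P_{\cC^\perp}$ by $(1-T)(1-q^mT)$ and substituting this last formula yields $Z_{\cC^\perp}(T)$ as the asserted scalar multiple of $T^{r-2}\,Z_\cC\!\big(1/(q^mT)\big)$.

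Finally, the formally self-dual case is immediate: $W_\cC=W_{\cC^\perp}$ forces $d=d^\perp$ and, evaluating at $(x,y)=(1,1)$, $|\cC|^2=q^{mn}$, i.e.\ $k=mn/2$; since $W_\cC$ determines $P_\cC$ and hence $Z_\cC$ (again Theorem~\ref{th49}), we have $Z_{\cC^\perp}=Z_\cC$ and the general identity specializes directly. The only genuine obstacle is the $q$-power bookkeeping in the reindexing step; everything else is formal, given the rationality $Z_\cC=P_\cC/\big((1-T)(1-q^mT)\big)$ from Section~3 and the MacWilliams-type Corollary.
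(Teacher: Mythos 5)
Your approach is the same as the paper's: transfer the statement to the $P$-level identity $P_{\cC^\perp}(T)=q^{m(n-d+1)-k}T^{r}P_\cC\bigl(1/(q^mT)\bigr)$ via the Corollary's relation $s_j=q^{m(d^\perp+j-1)-k}p_{r-j}$ and a reindexing (using $d^\perp+r-1=n-d+1$), then divide by $(1-T)(1-q^mT)$. Your extra step making explicit that $\bigl(1-\tfrac1{q^mT}\bigr)\bigl(1-\tfrac1T\bigr)=(1-T)(1-q^mT)/(q^mT^2)$ is exactly what the paper's terse ``taking the quotient, the result follows'' is hiding.

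One thing you should have caught while ``tracking the exponents of $q$'': carrying your own computation through yields the scalar $q^{m(n-d+1)-k-m}=q^{m(n-d)-k}$, \emph{not} the $q^{m(n-d+1)+k}$ printed in the theorem. Your closing claim that the substitution ``yields $Z_{\cC^\perp}$ as the asserted scalar multiple'' is therefore inaccurate: the printed exponent is a misprint (it fails already in the paper's own worked example $\cC_2$, with $m=n=3$, $d=2$, $k=4$, $q=2$, where the correct scalar is $2^{-1}$, not $2^{10}$; it also fails the sanity check $P_{\cC}=P_{\cC^\perp}=1$ for MRD codes). The careful bookkeeping you flagged as the crux is precisely where you should have noticed this; state the corrected exponent rather than declaring agreement with the stated formula.
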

\begin{proof}Let $P_{\mathcal{C}}(T)=\displaystyle\sum_{i=0}^rp_iT^i$ and $P_{\mathcal{C}^{\perp}}(T)=\displaystyle\sum_{j=0}^rs_iT^i$, for some $p_i,s_i \in \Q$. 
By Corollary \ref{cor1}, for $0\leq j\leq r$:
$$
s_j=p_{r-j}q^{m(d^{\perp}+j-1)-k},
$$
hence multiplying by $T^{j-r}$, summing in $j$ and then changing $j$ to $r-j$:
\begin{eqnarray*}
T^{-r}P_{\mathcal{C}^{\perp}}(T)= \sum_{j=0}^rp_jq^{m(d^{\perp}+r-j-1)-k}T^{-j}=q^{m(d^{\perp}+r-1)-k}P_{\mathcal{C}}\left(\frac{1}{q^mT}\right).
\end{eqnarray*}
Thus
$$
P_{\mathcal{C}^{\perp}}(T)=T^rq^{m(n-d+1)-k}P_{\mathcal{C}}\left(\frac{1}{q^mT}\right).
$$

Taking the quotient with $(1-T)(1-q^mT)$, the result follows.
\end{proof}

\begin{example}
We consider the zeta polynomials of the duals of the codes from Example \ref{exjohn}. The code $\cC_2$ has $m=n=3$, $d=2$, $k=4$, $q=2$, and zeta polynomial
\[
P_{\cC_2}(T) = \frac{1}{49}\left(15-30T+64T^2\right).
\]
Thus
\begin{align*}
P_{\cC_2^{\perp}}(T) &=T^2 2^{3}P_{\cC}\left(\frac{1}{2^3T}\right)\\
&= \frac{1}{49}\left(4-15T+60T^2\right)
\end{align*}

The code $\cC_3$ has $m=n=6$, $d=4$, $k=12$, and zeta polynomial
\[
P_{\cC_3}(T) = \frac{1-34T + 64T^2}{31}.
\]
Thus
\begin{align*}
P_{\cC_3^{\perp}}(T) &=T^2 2^{6}P_{\cC_3}\left(\frac{1}{2^6T}\right) \\
&= \frac{1-34T + 64T^2}{31} = P_{\cC_3}(T).
\end{align*}
Note that although $\cC_3$ and $\cC_3^{\perp}$ have the same zeta polynomial (and hence the same zeta function), they do not have the same weight enumerator, as they have different minimum distances. In fact, if a code has quadratic zeta polynomial, which occurs if and only if $d+d^{\perp} = n$, then its dual will have the same zeta polynomial if and only if $k=m(n-d)$.
\end{example}

We close this section by showing how the zeta function can be used to derive an upper bound on the minimum distance of code, which is the rank-metric analogue of \cite[Section 2]{D01}.

\begin{thm}Let $P_{\mathcal{C}}(T)=p_0(1+aT+...)$ be the zeta polynomial of a rank metric code $\mathcal{C}\subseteq\mathbb{F}_q^{m\times n}$, of degree $r\leq n-d$ and let $a$ be the negative of the sum of its reciprocal roots. Then
$$
d\leq \log_q\left[(a+q^ m+1)(q-1)+1\right]-1.
$$
\end{thm}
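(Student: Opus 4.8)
The plan is to extract information about the minimum distance $d$ from the first two coefficients of the zeta polynomial, using the explicit description of $P_{\mathcal{C}}(T)$ in terms of the normalized binomial moments $b_u(\mathcal{C})$. Write $P_{\mathcal{C}}(T) = \sum_{u=0}^{r} p_u(\mathcal{C}) T^u$ with $p_0(\mathcal{C}) \neq 0$, so that the hypothesis $P_{\mathcal{C}}(T) = p_0(1 + aT + \cdots)$ means $a = p_1(\mathcal{C})/p_0(\mathcal{C})$, which is exactly the negative of the sum of reciprocal roots. From Definition \ref{def:pu} and the extension of $b_u$ to all integers, I have $p_0(\mathcal{C}) = (q^m-1)^{-1} b_0(\mathcal{C})$ and $p_1(\mathcal{C}) = (q^m-1)^{-1}(b_1(\mathcal{C}) - (q^m+1) b_0(\mathcal{C}))$, since $b_{-1}(\mathcal{C}) = b_{-2}(\mathcal{C}) = 0$. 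Therefore
$$ a = \frac{p_1(\mathcal{C})}{p_0(\mathcal{C})} = \frac{b_1(\mathcal{C})}{b_0(\mathcal{C})} - (q^m+1). $$

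The next step is to bound $b_1(\mathcal{C})/b_0(\mathcal{C})$ from below. Recall $b_0(\mathcal{C}) = B_d(\mathcal{C}) / \qbin{n}{d}{}$ is the average number of nonzero codewords in a shortening $\mathcal{C}_U$ with $\dim U = n-d$, and $b_1(\mathcal{C}) = B_{d+1}(\mathcal{C}) / \qbin{n}{d+1}{}$ is the average over shortenings $\mathcal{C}_V$ with $\dim V = n-d-1$. Since each such $V$ of dimension $n-d-1$ is contained in $\qbin{d+1}{1}{} = (q^{d+1}-1)/(q-1)$ subspaces $U$ of dimension $n-d$, and each $U$ of dimension $n-d$ contains $\qbin{n-d}{n-d-1}{} = (q^{n-d}-1)/(q-1)$ subspaces of dimension $n-d-1$, a double-counting argument on pairs $(V,U)$ with $V \leq U$, $|\mathcal{C}_U| \geq 1$ (using $\mathcal{C}_U \subseteq \mathcal{C}_V$ so $|\mathcal{C}_V| - 1 \geq |\mathcal{C}_U| - 1$) gives
$$ \qbin{d+1}{1}{} \, B_{d+1}(\mathcal{C}) \;\geq\; \qbin{n-d}{n-d-1}{} \, B_d(\mathcal{C}), $$
which rearranges, via the identity $\qbin{n}{d+1}{} \qbin{n-d}{n-d-1}{} = \qbin{n}{d}{}\qbin{d+1}{1}{} \cdot \tfrac{q^{n-d}-1}{q^{n-d-1}-1}$-type manipulations (more directly, $\qbin{n}{d+1}{}/\qbin{n}{d}{} = (q^{n-d}-1)/(q^{d+1}-1)$), to the clean inequality $b_1(\mathcal{C}) \geq (q-1)\, b_0(\mathcal{C}) + \text{(something)}$; I expect the bookkeeping to collapse to $b_1(\mathcal{C})/b_0(\mathcal{C}) \geq q^d(q-1) + \cdots$, and after substituting into the expression for $a$ and solving for $d$ one obtains $d \leq \log_q[(a + q^m + 1)(q-1) + 1] - 1$.

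The main obstacle is getting the double-counting inequality to land with exactly the right constants: one must be careful that $b_0(\mathcal{C}) > 0$ (which holds because $\mathcal{C}$ has minimum distance $d$, so there is a codeword of rank $d$ whose column space has a complement $U$ of dimension $n-d$ with that codeword in $\mathcal{C}_U$), and one must track the $q$-binomial ratios precisely rather than just asymptotically, since the bound is stated as an equality-style Singleton-type inequality. A cleaner route to the same end may be to use Lemma \ref{lem:binmom} together with the recursion \eqref{eq:rel} and the fact that $b_u(\mathcal{C})$ is an increasing sequence in $u$ whose successive ratios are controlled; in either case the inequality $b_1(\mathcal{C}) \geq q^d(q-1) b_0(\mathcal{C})$ (equivalently $B_{d+1}(\mathcal{C})/B_d(\mathcal{C}) \geq$ the corresponding $q$-binomial ratio) is the crux, and everything after it is the elementary algebra of isolating $d$.
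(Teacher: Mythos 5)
Your reduction of the problem to bounding $b_1(\mathcal{C})/b_0(\mathcal{C})$ is correct and a nice variant of the paper's route: indeed, from Definition~\ref{def:pu} one gets $p_0 = (q^m-1)^{-1}b_0(\mathcal{C})$ and $p_1 = (q^m-1)^{-1}\bigl(b_1(\mathcal{C})-(q^m+1)b_0(\mathcal{C})\bigr)$, so $a + q^m + 1 = b_1(\mathcal{C})/b_0(\mathcal{C})$, and the desired bound is exactly $b_1(\mathcal{C})/b_0(\mathcal{C}) \geq \qbin{d+1}{1}{} = (q^{d+1}-1)/(q-1)$. However, the mechanism you propose to prove this inequality does not deliver it. Your pairwise double-counting on $(V,U)$ with $V\leq U$ uses only $|\mathcal{C}_V|-1 \geq |\mathcal{C}_U|-1$ and hence yields $\qbin{d+1}{1}{}B_{d+1}(\mathcal{C}) \geq \qbin{n-d}{1}{}B_d(\mathcal{C})$; after normalizing (using $\qbin{n}{d}{}/\qbin{n}{d+1}{} = \qbin{d+1}{1}{}/\qbin{n-d}{1}{}$) this collapses to $b_1(\mathcal{C})/b_0(\mathcal{C}) \geq 1$, i.e.\ $a \geq -q^m$, which is vacuous. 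Moreover your guessed target $b_1(\mathcal{C}) \geq q^d(q-1)\,b_0(\mathcal{C})$ is not the right crux: what one needs is $b_1(\mathcal{C}) \geq \qbin{d+1}{1}{}\,b_0(\mathcal{C})$, and these two are not equivalent (for instance, for $q=2$ they differ by roughly a factor of $2$).

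The actual source of the inequality is $W_{d+1}(\mathcal{C}) \geq 0$. By the Proposition expressing $W_t(\mathcal{C})$ in terms of the $b_u(\mathcal{C})$, one has $W_{d+1}(\mathcal{C}) = \qbin{n}{d+1}{}\bigl(b_1(\mathcal{C}) - \qbin{d+1}{1}{}\,b_0(\mathcal{C})\bigr)$, and nonnegativity of this count gives precisely $b_1(\mathcal{C}) \geq \qbin{d+1}{1}{}b_0(\mathcal{C})$. This is the fact the paper extracts, though it does so by comparing the coefficient of $x^{n-d-1}y^{d+1}$ in $\frac{1}{q^m-1}(W_{\mathcal{C}}(x,y)-x^n) = \frac{1}{q^m-1}\sum_i p_i\bigl(M_{n,d+i}(x,y)-x^n\bigr)$, which encodes the same information in the MRD basis. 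If you want a purely combinatorial version of the refined bound, observe that for a fixed $V$ of dimension $n-d-1$, every nonzero $X\in \mathcal{C}_V$ has rank $d$ or $d+1$; each rank-$d$ such $X$ lies in exactly one $\mathcal{C}_U$ with $V<U$, $\dim U = n-d$ (namely $U=X^\perp$), and each rank-$(d+1)$ such $X$ lies in none. Hence $|\mathcal{C}_V|-1 \geq \sum_{V<U,\ \dim U = n-d}(|\mathcal{C}_U|-1)$, and summing over $V$ gives $B_{d+1}(\mathcal{C}) \geq \qbin{n-d}{1}{}B_d(\mathcal{C})$ -- note the absence of the extra $\qbin{d+1}{1}{}$ factor you picked up -- which normalizes to the needed $b_1(\mathcal{C})/b_0(\mathcal{C}) \geq \qbin{d+1}{1}{}$. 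Without this sharper counting, or an explicit appeal to the nonnegativity of $W_{d+1}(\mathcal{C})$, the proof is incomplete.
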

\begin{proof}
First, observe that according to Theorem \ref{th49} the coefficient of $T^{n-d}$ in $Z_{\mathcal{C}}(T)\phi_n(T)$ is
\begin{equation}
\frac{W_{\mathcal{C}}(x,y)-x^ n}{q^m-1}.
\label{s5eq1}
\end{equation}
Second, since $\frac{1}{(1-T)(1-q^ mT)}$ is the common zeta-function of all the MRD weight enumerators (in particular, of those with length $n$), again due to Theorem \ref{th49}, we have
$$
\frac{\phi_n(T)}{(1-T)(1-q^ mT)}=\frac{1}{q^ m-1}\sum_{i=0}^{n-d}\left(M_{n,d+i}(x,y)-x^ n\right)T^{n-d-i}+...
$$
Likewise, the coefficient of $T^{n-d}$ in $Z_{\mathcal{C}}(T)\phi_n(T)$ is
\begin{equation}
\frac{1}{q^ m-1}\sum_{i=0}^{r}p_i\left(M_{n,d+i}(x,y)-x^ n\right).
\label{s5eq2}
\end{equation}
Within the coefficient of $T^ {n-d}$ in these two coincident expressions, we need to compare the coefficients of $x^{n-d}y^ d$ and $x^{n-d-1}y^{d+1}$.

The coefficient of $x^{n-d}y^d$ in (\ref{s5eq1}) is $\frac{W_{n-d}(\mathcal{C})}{q^m-1}$, $W_{n-d}(\mathcal{C})$ being the number of codewords of rank $d$ in $\mathcal{C}$, which equals the coefficient of $x^{n-d}y^d$ in (\ref{s5eq2}), i.e., in $\frac{p_0}{q^ m-1}\left(M_{n,d}(x,y)-x^n\right)$. Using Theorem \ref{th49}, this last expression can be expanded as
\begin{multline*}
\frac{p_0}{q^ m-1}\left(\sum_{i=d}^{n}\phi_{n,n-i}(x,y)b_{i-d}\right)= \\  = \frac{p_0}{q^ m-1}\sum_{i=d}^{n}{n\brack i}b_{i-d}y^ i\sum_{j=0}^{n-i}(-1)^{n-i-j}q^{{n-i-j\choose 2}}{n-i\brack j}x^jy^{n-i-j},
\end{multline*}
the coefficient of $x^{n-d}y^d$ in which expression is precisely $\frac{p_0}{q^m-1}{n\brack d}b_0$. Comparing with the corresponding coefficient in (\ref{s5eq1}) we obtain $p_0\geq 0$. Observe that the numbers $b_u$ are the normalized $q$-binomial coefficients of an MRD code of length $n$, and they are independent of the minimal distance $d$.

Now, we compare the coefficients of $x^{n-d-1}y^{d+1}$. First, this coefficient is $\frac{W_{n-d-1}(\mathcal{C})}{q^m-1}\geq 0$ in (\ref{eq1}). In (\ref{s5eq2}), this equals the coefficient of $x^{n-d-1}y^{d+1}$ in the sub-expression
$$
\frac{1}{q^m-1}\left(p_0(M_{n,d}(x,y)-x^n)+p_1(M_{n,d+1}(x,y)-x^n)\right).
$$
The coefficient in $\frac{p_1}{q^m-1}\left(M_{n,d+1}(x,y)-x^n\right)$ is, as before 
\begin{equation}
\frac{p_1}{q^m-1}{n\brack d+1}b_0.
\label{s5eq3}
\end{equation}
As for the coefficient in $\frac{p_0}{q^m-1}\left(M_{n,d}(x,y)-x^n\right)$, we expand this expression again as
$$
\frac{p_0}{q^m-1}\sum_{i=d}^{n}\phi_{n,n-i}(x,y)b_{i-d},
$$
from which we isolate the sought for coefficient, which is
\begin{equation}
\frac{p_0}{q^m-1}\left({n\brack d+1}b_1-{n\brack d}{n-d\brack n-d-1}b_0\right).
\label{s5eq4}
\end{equation}
Adding (\ref{s5eq3}) and (\ref{s5eq4}), dividing by $p_0$ and taking into account that ${n\brack d+1}=\frac{q^{n-d}-1}{q^{d+1}-1}{n\brack d}$, yields
\begin{equation}
ab_0\frac{q^{n-d}-1}{q^{d+1}-1}+b_1\frac{q^{n-d}-1}{q^{d+1}-1}-b_0{n-d\brack n-d-1}\geq 0.
\label{s5eq5}
\end{equation}
Since ${n-d\brack n-d-1}=\frac{q^{n-d}-1}{q-1}$, it holds that,
$$
ab_0+b_1-b_0\frac{q^{d+1}-1}{q-1}\geq 0.
$$
Since $b_u=q^{m(u+1)}-1$ for each $u\geq 0$, dividing by $b_0$ yields
$$
a+q^m+1\geq\frac{q^{d+1}-1}{q-1}.
$$
By taking logarithm the result follows.
\end{proof}

\section{Puncturing and Shortening}

Puncturing and shortening are fundamental operations of coding theory. In \cite{D01}, the zeta function was related to the {\em normalized weight enumerator} of a code, by successive puncturing and shortening of an MDS weight enumerator. We consider the rank-metric case in what follows.

We have already considered shortened subcodes in order to define the zeta function of a code. We will now consider puncturing and shortening as operations that yield codes in 
$\fq^{ m\times(n-1) }$, that is, as projections of codes in $\fq^{m \times n}$. Such operations have been considered in \cite{BR17}, with a slightly different definition. We will establish the invariance of the normalized weight enumerator of a code under puncturing and shortening.

\begin{defn}
	Let $H$ be a hyperplane in $\fq^n$. Fix a basis of $H$ and 
	let $P_H \in \fq^{ n\times(n-1) }$ be the matrix whose columns are the elements this basis of $H$, in some order.
	Let $h \in \fq^n \backslash H$.
	We define the punctured and shortened codes of $\cC$ with respect to $H$, respectively by:
	\begin{eqnarray*}
		\Pi_H(\cC) & := & \{  XP_H : X \in \cC\} \subset \fq^{ m\times(n-1) } \text{ (punctured code)},\\
		\Sigma_{h,H}(\cC)& := & \{ XP_H  : X \in \cC, Xh^T = 0 \}\subset \fq^{ m\times(n-1) } \text{ (shortened code)}.
	\end{eqnarray*}
\end{defn}

Clearly $\Pi_H(\cC)$ and $\Sigma_{h,H}(\cC)$ are only well-defined up to a choice of basis of $H$. We assert that this is sufficient for our purposes, which is to define operations of puncturing and shortening on {\em normalized} weight enumerators. 
With these definitions, $\cC$ can by punctured in any of $\qbin{n}{n-1}{}=\qbin{n}{1}{}$ ways and can be shortened in any of 
$q^{n-1}\qbin{n}{1}{}=|\{(\langle h \rangle,H ): \dim H = n-1, h \notin H\}|$.

\begin{lem}\label{lem:dist}
	Let $H$ be a hyperplane in $\fq^n$ and let $P_H \in \fq^{n \times (n-1)}$ have columns that form a basis of $H$. Let $X$ be non-zero in 
	$\fq^{m \times n}$. Then
	$$\rk(XP_H) = \left\{  \begin{array}{ll}
	                            \rk(X)   & \text{ if }X^\perp \not\subset H \\
	                            \rk(X)-1 & \text{ if }X^\perp \subset H  
	                       \end{array}
	                       \right.$$ 
\end{lem}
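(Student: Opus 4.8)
Let $X \in \fq^{m \times n}$ be nonzero, let $H$ be a hyperplane of $\fq^n$, and recall that $X^\perp = \ker X = \{ y \in \fq^n : Xy^T = 0\}$ is the right nullspace of $X$, so that $\dim X^\perp = n - \rk(X)$. The plan is to interpret $XP_H$ as the restriction of the linear map $y \mapsto X y^T$ to the subspace $H$, read through the chosen coordinates on $H$. Concretely, since the columns of $P_H$ form a basis of $H$, the map $z \mapsto P_H z^T$ is an $\fq$-isomorphism from $\fq^{n-1}$ onto $H \subseteq \fq^n$, and hence $\rk(XP_H) = \rk(X|_H)$, the rank of the linear transformation $X$ restricted to the domain $H$. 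In particular this already shows the quantity $\rk(XP_H)$ does not depend on the chosen basis of $H$, only on $H$ itself.

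The core computation is then a dimension count for the restricted map $X|_H : H \to \fq^m$. Its kernel is exactly $H \cap X^\perp$, so by rank--nullity $\rk(XP_H) = \dim H - \dim(H \cap X^\perp) = (n-1) - \dim(H \cap X^\perp)$. Now split into the two cases. If $X^\perp \not\subset H$, then since $H$ is a hyperplane, $H \cap X^\perp$ has codimension $1$ in $X^\perp$, i.e. $\dim(H \cap X^\perp) = \dim X^\perp - 1 = (n - \rk X) - 1$; substituting gives $\rk(XP_H) = (n-1) - (n - \rk X - 1) = \rk(X)$. If instead $X^\perp \subset H$, then $H \cap X^\perp = X^\perp$, so $\dim(H \cap X^\perp) = n - \rk(X)$ and $\rk(XP_H) = (n-1) - (n - \rk X) = \rk(X) - 1$. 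This is exactly the claimed dichotomy.

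There is no real obstacle here; the only point requiring a moment's care is the elementary fact that for a hyperplane $H$ and any subspace $W$ of $\fq^n$ one has $\dim(H \cap W) = \dim W$ if $W \subseteq H$ and $\dim(H \cap W) = \dim W - 1$ otherwise — which follows from $\dim(H + W) = \dim H + \dim W - \dim(H \cap W)$ together with the observation that $H + W$ is either $H$ (when $W \subseteq H$) or all of $\fq^n$ (when $W \not\subseteq H$, since $H$ is maximal). One should also note the hypothesis $X \neq 0$ is used only to guarantee $\rk(X) \geq 1$, so that in the second case $\rk(X) - 1 \geq 0$ is a legitimate rank; the identity $\rk(XP_H) = \rk(X) - 1$ holds regardless, and when $X^\perp \subset H$ with $X \neq 0$ one indeed has $\rk(X P_H) = \rk(X) - 1$ possibly equal to $0$, meaning $XP_H = 0$.
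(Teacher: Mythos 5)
Your proof is correct, and it takes a genuinely different route from the paper's. The paper completes $P_H$ to an invertible matrix $[P_H,\, h^T]$ for any $h \in \fq^n \setminus H$, so that $\rk(X) = \rk(X[P_H,\,h^T]) = \rk([XP_H,\, Xh^T])$, and then characterizes when the appended column $Xh^T$ already lies in the column space of $XP_H$; that condition unwinds to $h \in X^\perp + H$, and the dichotomy follows from whether $X^\perp + H$ equals $\fq^n$ or $H$. You instead read $\rk(XP_H)$ directly as the rank of the restriction $X|_H$ and invoke rank--nullity, getting $\rk(XP_H) = (n-1) - \dim(H \cap X^\perp)$, after which the same dimension dichotomy on $H \cap X^\perp$ finishes. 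The two arguments are of comparable length and both hinge on the modularity of the subspace lattice around the hyperplane $H$; yours makes the kernel of the restricted map explicit and so is slightly more self-contained, while the paper's avoids naming rank--nullity by comparing column spaces after an invertible completion. One small inaccuracy in your closing remark: the hypothesis $X \neq 0$ is in fact dispensable, because if $X = 0$ then $X^\perp = \fq^n$, which is never contained in the proper subspace $H$, so the second case cannot occur and the first case gives $\rk(XP_H) = 0 = \rk(X)$; it is not that $X \neq 0$ is needed to keep $\rk(X) - 1 \geq 0$ meaningful.
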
	

\begin{proof}
	Let $h \in \fq^n \backslash H$. Then 
	$\rk(X) = \rk(X[P_H, h^T])=\rk([XP_H,Xh^T])=\rk(XP_H)$
	if and only if $Xh^T$ is contained in the column-space of $XP_H$, which holds if and only if $h \in X^\perp +H$.
	$X^\perp \not\subset H$ if and only if $\fq^n = X^\perp +H$, in which case $h \in X^\perp + H$ and $\rk(X)=\rk(XP_H)$.
	If $X^\perp \subset H$ then $h \notin X^\perp + H = H$, so $\rk(XP_H)=\rk(X)-1$. 
\end{proof}	

\begin{cor}
   Let $H$ be a hyperplane in $\fq^n$ and let $h \in \fq^n\backslash H$. 
   Let $\cC$ have parameters $[m \times n,k,d\geq 2]$ over $\fq$.
   Then the punctured code $\Pi_H(\cC)$ has parameters
   $[m \times (n-1),k,\geq d-1]$. The shortened code $\Sigma_{h,H}(\cC)$ has parameters $[m \times (n-1),\geq k-m,\geq d]$.
\end{cor}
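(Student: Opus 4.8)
The plan is to read off both statements from Lemma~\ref{lem:dist} combined with rank--nullity applied to two obvious linear maps; no deep input is needed. I would first note that the asserted parameters do not depend on the chosen basis of $H$: replacing $P_H$ by $P_HA$ with $A\in\GL(n-1,\fq)$ replaces each $XP_H$ by $XP_HA$, which has the same rank, so all dimensions and minimum distances are unaffected and the statement is well posed.

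For the punctured code I would consider the $\fq$-linear map $\pi_H\colon\cC\to\fq^{m\times(n-1)}$, $X\mapsto XP_H$. Its kernel consists of those $X\in\cC$ all of whose columns annihilate $H$, i.e.\ with $H\leq X^\perp$; but any nonzero $X\in\cC$ has $\rk(X)\geq d\geq 2$, hence $\dim X^\perp\leq n-2<\dim H$, so $H\not\leq X^\perp$ and $\pi_H$ is injective. Therefore $\dim\Pi_H(\cC)=k$. Injectivity means the nonzero codewords of $\Pi_H(\cC)$ are exactly the images of the nonzero $X\in\cC$, and Lemma~\ref{lem:dist} gives $\rk(XP_H)\geq\rk(X)-1\geq d-1$ for each of them, so $d(\Pi_H(\cC))\geq d-1$. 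This gives the parameters $[m\times(n-1),k,\geq d-1]$.

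For the shortened code I would set $\cC_h:=\{X\in\cC:Xh^T=0\}$, the kernel of the linear map $\cC\to\fq^m$, $X\mapsto Xh^T$; rank--nullity gives $\dim\cC_h\geq k-m$. Then $\Sigma_{h,H}(\cC)=\pi_H(\cC_h)$, and the restriction of $\pi_H$ to $\cC_h\subseteq\cC$ is again injective by the argument above (elements of $\cC_h$ are either $0$ or of rank $\geq d\geq 2$), so $\dim\Sigma_{h,H}(\cC)=\dim\cC_h\geq k-m$. For the distance, take any nonzero $X\in\cC_h$: then $h\in X^\perp$, and since $h\notin H$ this forces $X^\perp\not\leq H$, whence $\rk(XP_H)=\rk(X)\geq d$ by Lemma~\ref{lem:dist}. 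Hence $d(\Sigma_{h,H}(\cC))\geq d$, which is the parameter set $[m\times(n-1),\geq k-m,\geq d]$.

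The proof has no real obstacle; the one point to keep straight is that the hypothesis $d\geq 2$ is exactly what makes $\pi_H$ injective (no nonzero codeword vanishes on the whole hyperplane $H$, so no artificial rank-zero codewords are created), and the condition $h\notin H$ is exactly what promotes the ``rank drops by one'' alternative of Lemma~\ref{lem:dist} to ``rank preserved'' on the shortened code. Everything else is routine bookkeeping.
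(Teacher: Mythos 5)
Your proof is correct and follows essentially the same approach as the paper's: both use Lemma~\ref{lem:dist} for the minimum-distance bounds, the observation that $d\geq 2$ forces $XP_H\neq 0$ for nonzero $X$ (hence $\dim\Pi_H(\cC)=k$), and rank--nullity on the map $X\mapsto Xh^T$ for the shortened dimension. You helpfully make explicit a step the paper leaves implicit, namely that $h\in X^\perp$ together with $h\notin H$ forces $X^\perp\not\subset H$ and hence rank preservation under $\pi_H$, giving $d(\Sigma_{h,H}(\cC))\geq d$ rather than merely $\geq d-1$.
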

\begin{proof}
	The lower bounds on the minimum distances of the codes follows directly from Lemma \ref{lem:dist}.
	Let $P_H \in \fq^{n \times (n-1)}$ have columns that form a basis of $H$. Then $XP_H = 0$ if and only if $H \subset X^\perp$, in which case $X=0$, since otherwise $\dim X^\perp =n-\rk(X)\leq n-d \leq n-2$. It follows that $\cC$ and $\Pi_H(\cC)$ have the same cardinality.
	
	Let $h \in \fq^n \backslash H$. Consider the map $\theta_h :\fq^{m \times n} \longrightarrow \fq^m: X \mapsto Xh^T$.
	Then $\cC \cap \ker \theta_h=\cC_{\langle h \rangle}:=\{ X \in \cC : Xh^T = 0 \}$. 
	Therefore $\dim \cC_{\langle h \rangle} = \dim (\cC \cap \ker \theta_h) \geq k - m$. The result now follows since $\Sigma_{h,H}(\cC)$ is obtained as the punctured code of $\cC_{\langle h \rangle}$ with respect to $H$, which both have the same dimension. 
	\end{proof}	
Given an arbitrary code, $\cC$ and hyperplanes $H_1,H_2$, the punctured codes $\Pi_{H_1}(\cC)$ and $\Pi_{H_2}(\cC)$ may have different weight enumerators. Neither is the weight enumerator of a shortened code $\Sigma_{h,H}(\cC)$ necessarily uniquely determined.
However, in the case of an MRD code $\cC$ we observe:

\begin{cor}\label{cor:mrd}
Let $d\geq 2$. The shortened and punctured codes of an MRD code with weight enumerator $M_{n,d}(x,y)$ are also MRD and have weight enumerators $M_{n-1,d}(x,y)$ and $M_{n-1,d-1}(x,y)$, respectively.
\end{cor}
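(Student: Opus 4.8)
The plan is, for each of the punctured and shortened codes, to determine its dimension and minimum distance, to check that it meets the rank-metric Singleton bound inside $\fq^{m\times(n-1)}$ (which makes sense since $m\geq n>n-1$), and then to invoke \cite{D78}, which guarantees that the weight enumerator of an MRD code is uniquely determined by its length and minimum distance. Throughout we use the hypothesis $d\geq 2$; since also $d\leq n$, this forces $n\geq 2$.

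We would first handle the punctured code. By the previous corollary, $\Pi_H(\cC)\subseteq\fq^{m\times(n-1)}$ has the same dimension $k=m(n-d+1)$ as $\cC$ --- puncturing is injective on $\cC$ because every nonzero $X\in\cC$ has $\dim X^\perp\leq n-d\leq n-2<n-1=\dim H$ --- and has minimum distance at least $d-1$. Writing $d'$ for its actual minimum distance, the Singleton bound gives $q^{m(n-d+1)}=|\Pi_H(\cC)|\leq q^{m((n-1)-d'+1)}$, hence $d'\leq d-1$. Thus $d'=d-1$, the code attains the Singleton bound, and so $\Pi_H(\cC)$ is MRD with weight enumerator $M_{n-1,d-1}(x,y)$.

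We would then treat the shortened code, where $\Sigma_{h,H}(\cC)=\Pi_H(\cC_{\langle h\rangle})$ with $\cC_{\langle h\rangle}=\{X\in\cC:Xh^T=0\}=\cC_U$ for $U=\langle h\rangle$. As in the previous corollary the puncturing step preserves the dimension (again using $d\geq 2$), so it suffices to compute $|\cC_U|$, where $\dim U=1$ and $\dim U^\perp=n-1$. We would show $\cC^\perp_{U^\perp}=\{0\}$: since $\cC^\perp$ is again MRD, with minimum distance $d^\perp=n-d+2\geq 2$, every nonzero $Y\in\cC^\perp$ has $\dim Y^\perp=n-\rk Y\leq n-d^\perp=d-2<n-1$, so no such $Y$ satisfies $U^\perp\leq Y^\perp$. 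Lemma \ref{lem:rav} then gives $|\cC_U|=|\cC|\,|\cC^\perp_{U^\perp}|/q^{m}=q^{m(n-d+1)}/q^{m}=q^{m(n-d)}$, so $\Sigma_{h,H}(\cC)$ has dimension $m(n-d)$ and, by the previous corollary, minimum distance at least $d$; the Singleton bound in $\fq^{m\times(n-1)}$ forces equality, so $\Sigma_{h,H}(\cC)$ is MRD with weight enumerator $M_{n-1,d}(x,y)$. (For $d=n$ this degenerates to $\Sigma_{h,H}(\cC)=\{0\}$, and $M_{n-1,n}(x,y)=x^{n-1}$ by the convention $M_{n,n+1}=x^n$.)

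The argument is a squeeze: the lower bounds on the two minimum distances are free from Lemma \ref{lem:dist} and the previous corollary, while the matching upper bounds come from the Singleton bound once the cardinalities are known. The step needing the most care will be showing that the shortened code has dimension \emph{exactly} $m(n-d)$, i.e.\ that $\dim\cC_{\langle h\rangle}=k-m$ and not merely $\geq k-m$; this is precisely where Lemma \ref{lem:rav} together with the fact that $\cC^\perp$ is itself MRD comes in. Finally, one should remark that nothing depends on the basis chosen for $H$, since two such choices differ by an invertible column transformation, which preserves all rank weights.
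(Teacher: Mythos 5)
Your argument is correct, and its skeleton is the same as the paper's: start from the preceding corollary, which gives $\Pi_H(\cC)$ parameters $[m\times(n-1),k,\geq d-1]$ and $\Sigma_{h,H}(\cC)$ parameters $[m\times(n-1),\geq k-m,\geq d]$, then squeeze against the rank-metric Singleton bound and invoke uniqueness of the MRD weight enumerator. The one place you diverge is in handling the shortened code: you compute $\dim\cC_{\langle h\rangle}=k-m$ exactly via Lemma~\ref{lem:rav} together with the fact that $\cC^\perp$ is MRD. That computation is correct, but it is not needed --- the Singleton bound already caps the dimension given the lower bound on the minimum distance, so the squeeze closes on its own: from $\dim\Sigma_{h,H}(\cC)\geq m(n-d)$, $d(\Sigma_{h,H}(\cC))\geq d$, and the Singleton bound $\dim\Sigma_{h,H}(\cC)\leq m\bigl(n-d(\Sigma_{h,H}(\cC))\bigr)$, one gets
$$m(n-d)\;\leq\;\dim\Sigma_{h,H}(\cC)\;\leq\;m\bigl(n-d(\Sigma_{h,H}(\cC))\bigr)\;\leq\;m(n-d),$$
forcing equality throughout, exactly as you do for the punctured code. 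So your Lemma~\ref{lem:rav} step is a harmless detour rather than the crux; the paper's (very terse) proof relies on this self-closing squeeze, and your treatment of the punctured code already shows you see it. Your attention to the $d=n$ degenerate case and to independence of the basis choice for $H$ are both sensible additions.
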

\begin{proof}
	Let $H$ be a hyperplane in $\fq^n$. 
    Applying Corollary \ref{cor:mrd} together with the rank-metric Singleton bound, we see that the punctured code $\Pi_H(\cC)$ has parameters $[m \times (n-1),m(n-d+1),d-1]$ and is thus MRD.
    Similarly the shortened code $\Sigma_{h,H}(\cC)$ has parameters $[m \times (n-1),m(n-d),d]$ and is also an MRD code.  
\end{proof}	

We wish to define algebraic operations on the weight enumerator of a code corresponding to the expected weight enumerator of its punctured or shortened codes. In the Hamming metric case, such operations are defined in terms of partial derivatives \cite{D01}. In the rank-metric case,
we may implement $q$-analogues of partial derivatives (c.f. \cite{GM04}, \cite{K97}). We define such operations as follows.

\begin{defn}
	Let $f(x,y) = \sum_{i=0}^n f_i x^{n-i}y^i \in \Q[x,y]$. The partial $q$-derivatives of $f(x,y)$ with respect to $x$ and $y$ are defined by:
	\begin{eqnarray*}
		D_{x}(f(x,y)) & := & \frac{f(qx,y)-f(x,y)}{(q-1)x} =  \sum_{i=0}^n f_i \qbin{n-i}{1}{} x^{n-i-1}y^{i},\\
		D_{y}(f(x,y)) & := & \frac{f(x,qy)-f(x,y)}{(q-1)y} =  \sum_{i=0}^n f_i \qbin{i}{1}{} x^{n-i}y^{i-1},\\
		D_{q,x}(f(x,y)) & := & \frac{f(qx,qy)-f(x,qy)}{(q-1)x} =  \sum_{i=0}^n f_i q^{i}\qbin{n-i}{1}{} x^{n-i-1}y^{i}.		
	\end{eqnarray*}
\end{defn}
It is straightforward to check that these are linear operators.
In analogy with \cite[Section 3]{D01} we now define the operations of puncturing and shortening on the homogeneous polynomials of degree $n$ in $\Q[x,y]$ by

$$ \cP:=\qbin{n}{1}{}^{-1}\left( D_{q,x}+ D_{y}\right) \;\text{ and } \cS:=\qbin{n}{1}{}^{-1}	D_{x} .$$
Let us consider the {\em average} weight enumerator arrived at over all possible shortenings or puncturings (cf. \cite{D01}).

\begin{thm}
   The average weight enumerators over all shortened and punctured codes of $\cC$ are given by 
   \begin{enumerate}
   	\item $\displaystyle{\cP (W_{\cC}(x,y))=\qbin{n}{1}{}^{-1}\sum_{\dim H = n-1} W_{\Pi_H(\cC)}(x,y)},$
   	\item $\displaystyle{\cS (W_{\cC}(x,y))=\qbin{n}{1}{}^{-1}\frac{1}{q^{n-1}}\sum_{ \dim H = n-1,\langle h \rangle \not\subset H} W_{\Sigma_{h,H}(\cC)}(x,y)}.$	
   \end{enumerate}	
\end{thm}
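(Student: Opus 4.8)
The plan is to compute both averages directly from the definition of $W_\cC$ as a sum over codewords, interchanging the sum over codewords with the sum over hyperplanes (and over the pair $(\langle h\rangle, H)$ in the shortening case), and then to match the resulting coefficients against the explicit action of $\cP$ and $\cS$ on a single monomial $x^{n-t}y^t$. Since all three operators $D_{q,x},D_y,D_x$ are $\Q$-linear, it suffices to prove each identity when $W_\cC$ is replaced by a single term $W_t(\cC)x^{n-t}y^t$; equivalently, one fixes a codeword $X$ of rank $t$ and shows that its total contribution on the right-hand side equals its contribution on the left-hand side.

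First I would handle puncturing. Fix $X \in \cC$ of rank $t \geq 1$. By Lemma \ref{lem:dist}, as $H$ ranges over the $\qbin{n}{1}{}$ hyperplanes of $\fq^n$, the matrix $XP_H$ has rank $t$ precisely when $X^\perp \not\subset H$ and rank $t-1$ when $X^\perp \subset H$. Since $\dim X^\perp = n-t$, the number of hyperplanes containing $X^\perp$ is $\qbin{t}{1}{}$ (hyperplanes of $\fq^n$ through a fixed $(n-t)$-subspace correspond to hyperplanes of the $t$-dimensional quotient), so $X$ contributes a term $x^{n-1-(t-1)}y^{t-1} = x^{n-t}y^{t-1}$ with multiplicity $\qbin{t}{1}{}$ and a term $x^{n-t-1}y^{t}$ with multiplicity $\qbin{n}{1}{}-\qbin{t}{1}{}$. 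One then checks, using $\qbin{n}{1}{}-\qbin{t}{1}{} = q^{t}\qbin{n-t}{1}{}$, that $\qbin{n}{1}{}^{-1}$ times this matches exactly $\cP(x^{n-t}y^t) = \qbin{n}{1}{}^{-1}(q^{t}\qbin{n-t}{1}{}x^{n-t-1}y^t + \qbin{t}{1}{}x^{n-t}y^{t-1})$ from the definition of $D_{q,x}$ and $D_y$. Summing over all $X\in\cC$ gives part (1). The boundary cases $t=0$ (the zero codeword, contributing $x^{n-1}\mapsto$ nothing meaningful since $n$ is fixed to $n-1$; one treats $x^n\mapsto x^{n-1}$ consistently) should be checked but are routine.

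For shortening, fix again $X \in \cC$ of rank $t$. In $\Sigma_{h,H}(\cC)$ the codeword $X$ appears only when $Xh^T = 0$, i.e. $h \in X^\perp$; for such admissible pairs $(\langle h\rangle,H)$ with $h\notin H$ we then record $\rk(XP_H)$, which by Lemma \ref{lem:dist} is $t$ if $X^\perp\not\subset H$ and $t-1$ if $X^\perp\subset H$. But if $h\in X^\perp\subset H$ then $h\in H$, contradicting $h\notin H$; hence for admissible pairs we always have $X^\perp\not\subset H$ and $\rk(XP_H)=t$. So I must count admissible pairs: choose a nonzero $h\in X^\perp$ ($q^{n-t}-1$ choices, giving $\qbin{n-t}{1}{}$ lines $\langle h\rangle$, but it is cleaner to count ordered $h$) and then a hyperplane $H$ with $h\notin H$. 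The number of hyperplanes avoiding a fixed nonzero vector is $\qbin{n}{1}{}q^{n-1}/q^{n-1}\cdot\frac{q-1}{q}\cdot\dots$ — more carefully, the number of $(n-1)$-subspaces not containing a fixed nonzero $h$ is $\qbin{n}{1}{}-\qbin{n-1}{1}{} = q^{n-1}$. Thus the number of admissible pairs $(\langle h\rangle,H)$ is $\qbin{n-t}{1}{}\cdot q^{n-1}$, and dividing by the normalization $\qbin{n}{1}{}^{-1}q^{-(n-1)}$ on the right-hand side leaves $\qbin{n}{1}{}^{-1}\qbin{n-t}{1}{}$, which is precisely the coefficient of $x^{n-1-t}y^t\cdot(\text{wait: }x^{(n-1)-t}y^{t})$ in $\cS(x^{n-t}y^t)=\qbin{n}{1}{}^{-1}D_x(x^{n-t}y^t)=\qbin{n}{1}{}^{-1}\qbin{n-t}{1}{}x^{n-1-t}y^t$. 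Summing over $X$ yields part (2).

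The main obstacle is purely bookkeeping: getting the hyperplane counts and the two normalizing factors $\qbin{n}{1}{}^{-1}$ and $q^{-(n-1)}$ exactly right, and correctly tracking which monomial in $(n-1)$ variables a rank-$t$ codeword of the punctured/shortened code contributes to (in particular that the rank-drop in the punctured case shifts $y^t$ to $y^{t-1}$ while keeping the total degree $n-1$). I would present the argument by fixing one codeword and matching against the monomial action of the operators, then invoke $\Q$-linearity; the identities $\qbin{n}{1}{}-\qbin{t}{1}{}=q^{t}\qbin{n-t}{1}{}$ and $\qbin{n}{1}{}-\qbin{n-1}{1}{}=q^{n-1}$ are the only nontrivial algebraic inputs, and both follow immediately from $\qbin{a}{1}{}=\frac{q^{a}-1}{q-1}$.
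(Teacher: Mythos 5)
Your proposal is correct and follows essentially the same strategy as the paper's proof: fix a codeword $X$ of rank $t$, use Lemma~\ref{lem:dist} to classify the rank of $XP_H$ according to whether $X^\perp\subset H$, and count pairs against the monomial action of $\cP$ and $\cS$. The only cosmetic difference is the order of counting in the shortening case (you pick the line $\langle h\rangle\subset X^\perp$ first and then the $q^{n-1}$ hyperplanes avoiding it, whereas the paper picks the $q^t\qbin{n-t}{1}{}$ hyperplanes not containing $X^\perp$ first and then the $q^{n-t-1}$ admissible lines), and both give $q^{n-1}\qbin{n-t}{1}{}$; your explicit observation that admissibility forces $X^\perp\not\subset H$ is a nice clarification that the paper leaves implicit.
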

\begin{proof}
Let $H $ be a hyperplane in $\fq^n$ and let $P_H \in \fq^{n \times (n-1)}$ have columns that form a basis of $H$.
From Lemma \ref{lem:dist}, $\rk(XP_h)=\rk(X)$ if and only if $X^\perp \not \subset H$. 
Let $X\in \cC$ have rank $w>0$. Then $\rk(XP_H)=w-1$ if and only if $X^\perp \subset H$ 

Since there are $\qbin{n-(n-w)}{n-1-(n-w)}{}=\qbin{w}{1}{}$ hyperplanes in $\fq^n$ containing $X^\perp$,
$X$ corresponds to a word of rank $w-1$ in $\qbin{w}{1}{}$ punctured codes of $\cC$ and to a word of rank $w$ in $\qbin{n}{1}{}-\qbin{w}{1}{} = q^{w} \qbin{n-w}{1}{}$ punctured codes of $\cC$. 
Therefore, each term $x^{n-w}y^w$ of the weight enumerator of $\cC$ yields the contribution 
$$  q^{w} \qbin{n-w}{1}{}x^{n-w-1}y^w +  \qbin{w}{1}{}  x^{n-w}y^{w-1}, $$ in the sum of the $\qbin{n}{1}{}$ different weight enumerators of all possible punctured codes.

For any $h \notin H$, $XP_H \in \Sigma_{h,H}(\cC)$ if and only if $h \in X^\perp$. There are $|X^\perp \backslash X^\perp \cap H|$ such vectors $h$ and so
$q^{n-w-1}$ one dimensional subspaces $\langle h \rangle$ such that $XP_H \in \Sigma_{h,H}(\cC)$.
As outlined above, there are $q^{w} \qbin{n-w}{1}{}$ hyperplanes not containing $X^\perp$.
In particular 
each term $x^{n-w}y^w$ of the weight enumerator of $\cC$ yields the contribution
$$ q^{n-w-1} q^{w} \qbin{n-w}{1}{} x^{n-1-w}y^w = q^{n-1}\qbin{n-w}{1}{},$$ 
in the sum of the $q^{n-1}\qbin{n}{1}{}$ different weight enumerators of all possible shortened codes. 
\end{proof}

\begin{example}\label{exeim}
	Consider the ${\mathbb F}_2$-$[3 \times 3, 4
	,2]$ code 
	$$\cC =\left\langle \begin{bmatrix}
	1 & 0 & 0 \\ 0 & 0 & 0 \\ 0 & 1 & 0
	\end{bmatrix},
	\begin{bmatrix}
	0 & 0 & 1 \\ 1 & 0 & 0 \\ 0 & 0 & 0
	\end{bmatrix}, 
	\begin{bmatrix}
	0 & 1 & 0 \\ 0 & 0 & 1 \\ 0 & 0 & 0
	\end{bmatrix}, 
	\begin{bmatrix}
	0 & 0 & 1 \\ 0 & 1 & 0 \\ 0 & 1 & 0
	\end{bmatrix}\right\rangle,
	$$
	which has weight enumerator $W_{\cC}(x,y) = x^3 + 13 x y^2 + 2 y^3.$
	Then 6 hyperplanes $H$ yield shortened codes with distribution of weight enumerators similar to those for $H=001^\perp$,
	and the hyperplane $H=010^\perp$ yields 4 shortened codes $\Sigma_{h,H}$ of order 2.
	\begin{center}
	\begin{tabular}{ccc}
		\hline 
		$H$ & $h$ & $W_{\Sigma_{h,H}}(x,y)$ \\
		\hline
		$001^\perp$ & $001$ & $x^2 + 3y^2$ \\
		            & $111$ & $x^2 + y^2$ \\
		            & $011$ & $x^2 + y^2$ \\
		            & $101$ & $x^2 + 3y^2$ \\
	\end{tabular}
    \begin{tabular}{ccc}
    	\hline 
    	$H$ & $h$ & $W_{\Sigma_{h,H}}(x,y)$ \\
    	\hline
    	$010^\perp$ & $110$ & $x^2 + y^2$ \\
    	            & $010$ & $x^2 + y^2$ \\
    	            & $111$ & $x^2 + y^2$ \\
    	            & $011$ & $x^2 + y^2$ \\
    \end{tabular}
    \end{center}
    Then the average weight enumerator over all shortened codes is given by
    \begin{eqnarray*}
       & & \qbin{3}{1}{}^{-1}(1/4)\sum_{ \dim H = 2,\langle h \rangle \not\subset H} W_{\Sigma_{h,H}(\cC)}(x,y)\\
       & =& 
          (1/28)(28x^2 +52y^2) = x^2+13/7 y^2\\
       & = & \cS(x^3 + 13 x y^2 + 2 y^3) = \qbin{3}{1}{}^{-1}\left(\qbin{3}{1}{} x^2 + 13y^2\right).    
    \end{eqnarray*}
\end{example}
\begin{lem}
	The zeta polynomial $P_\cC(T)$ is invariant under the shortening and puncturing operations $\cS$ and $\cP$.
\end{lem}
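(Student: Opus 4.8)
The plan is to translate everything into the basis of MRD weight enumerators, on which $\cS$ and $\cP$ act by a simple shift of index. Concretely, I would first establish that for every $j$ in the relevant range
$$\cS\bigl(M_{n,j}(x,y)\bigr)=M_{n-1,j}(x,y),\qquad \cP\bigl(M_{n,j}(x,y)\bigr)=M_{n-1,j-1}(x,y),$$
with the convention $M_{n-1,n}=x^{n-1}$, and assuming $d\ge 2$ so that $M_{n-1,j-1}$ is a bona fide MRD weight enumerator when $j=d$. This is where Corollary~\ref{cor:mrd} does the work: \emph{every} hyperplane puncturing of an MRD code with weight enumerator $M_{n,j}$ has weight enumerator $M_{n-1,j-1}$, and \emph{every} hyperplane--point shortening has weight enumerator $M_{n-1,j}$, so the averages computed by the operators $\cP$ and $\cS$ in the preceding theorem equal these common values.

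Next, I would invoke Theorem~\ref{th49}: writing $P_{\cC}(T)=\sum_{i=0}^{n-d}p_i(\cC)T^{i}$, we have $W_{\cC}(x,y)=\sum_{i=0}^{n-d}p_i(\cC)\,M_{n,d+i}(x,y)$. Applying $\cS$ and $\cP$ term by term, using their linearity together with the shift identities, yields
$$\cS\bigl(W_{\cC}(x,y)\bigr)=\sum_{i=0}^{n-d}p_i(\cC)\,M_{n-1,d+i}(x,y),\qquad \cP\bigl(W_{\cC}(x,y)\bigr)=\sum_{i=0}^{n-d}p_i(\cC)\,M_{n-1,d-1+i}(x,y).$$
The right-hand sides are exactly the expansions, in the length-$(n-1)$ MRD basis, of the average weight enumerators of the shortened and punctured codes. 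Since shortening keeps the minimum distance at least $d$ and puncturing keeps it at least $d-1$, the uniqueness clause of Theorem~\ref{th49} identifies their zeta polynomials as $\sum_i p_i(\cC)T^i=P_{\cC}(T)$, which is the claim.

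The main obstacle is the endpoint bookkeeping, which I would flag explicitly. For puncturing one needs $d\ge 2$; for shortening the expansion carries a top term $p_{n-d}(\cC)\,M_{n-1,n}=p_{n-d}(\cC)\,x^{n-1}$ whose index $n-d$ sits one above the naive degree bound $(n-1)-d$. This is harmless when $d^{\perp}\ge 3$, since then $\deg P_{\cC}=n-d-d^{\perp}+2<n-d$ forces $p_{n-d}(\cC)=0$; and when $d^{\perp}=2$ one observes that shortening lowers the dual minimum distance by at most one, so the general Definition~\ref{def:pu} of the zeta polynomial already permits degree $n-d$ for the shortened code and the identification still goes through. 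Apart from this indexing care the argument is purely formal, the substantive content being contained entirely in Corollary~\ref{cor:mrd}.
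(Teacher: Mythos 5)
Your proof is correct and follows essentially the same route as the paper: write $W_{\cC}(x,y)=\sum_i p_i(\cC)M_{n,d+i}(x,y)$ via Theorem~\ref{th49}, push $\cS$ and $\cP$ through by linearity using the shift identities from Corollary~\ref{cor:mrd}, and read off the coefficients in the length-$(n-1)$ MRD basis. In fact you are more careful than the paper about the top term (the paper's shortening sum silently truncates at $i=n-1-d$, which implicitly assumes $p_{n-d}(\cC)=0$, i.e.\ $d^\perp\geq 3$; your discussion of the $M_{n-1,n}=x^{n-1}$ convention and the $d^\perp=2$ case closes that gap) and about requiring $d\ge 2$ for puncturing.
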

\begin{proof}
From Corollary \ref{cor:mrd}, we have 
$$\cS(W_{\cC}(x,y)) = \cS\left(\sum_{i=0}^{n-d} p_i(\cC) M_{n,d+i}(x,y) \right) = \sum_{i=0}^{n-1-d} p_i(\cC) M_{n-1,d+i} (x,y),$$
and 
$$\cP(W_{\cC}(x,y)) = \cP\left(\sum_{i=0}^{n-d} p_i(\cC) M_{n,d+i}(x,y) \right) = \sum_{i=0}^{n-d} p_i(\cC) M_{n-1,d+i-1} (x,y).$$
\end{proof}
\begin{defn}
	The normalized weight enumerator of $\cC$ is defined to be the polynomial,
	$$\cW_{\cC}(T):=  (q^m-1)^{-1}\sum_{i=d}^n \qbin{n}{i}{}^{-1} W_{i}(\cC)T^{i-d} .$$
\end{defn} 

We write $\cW_{\cC}^{\cP}(T)$ and $\cW_{\cC}^{\cS}(T)$ to denote the normalized weight enumerators corresponding to $\cP(W_\cC(x,y))$ and $\cS(W_\cC(x,y))$, respectively.

\begin{thm}\label{th:nwe} Let $\cC$ have minimum distance $d$. Then,	
	\begin{enumerate}
		\item $\cW_{\cC}^{\cP}(T) =q^d T \cW_{\cC}(qT) + \cW_{\cC}(T) -(q^m-1)^{-1} W_n(\cC)q^n T^{n-d+1},$\label{th:nwe1} 
		\item $\cW_{\cC}^{\cS}(T) =\cW_{\cC}(T) - (q^m-1)^{-1}W_n(\cC) T^{n-d} .$ 
	\end{enumerate}
\end{thm}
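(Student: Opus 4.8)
The plan is to compute both averaged enumerators $\cP(W_\cC(x,y))$ and $\cS(W_\cC(x,y))$ explicitly, read off their weight distributions, form the associated normalized weight enumerators, and then collapse the resulting finite sums using two elementary $q$-binomial identities. First I would record the action of the operators on a monomial: the displayed formulas for $D_{q,x},D_y,D_x$ give
$$\cP(x^{n-w}y^w)=\qbin{n}{1}{}^{-1}\left(q^w\qbin{n-w}{1}{}x^{n-w-1}y^w+\qbin{w}{1}{}x^{n-w}y^{w-1}\right),\qquad \cS(x^{n-w}y^w)=\qbin{n}{1}{}^{-1}\qbin{n-w}{1}{}x^{n-w-1}y^w.$$
Summing against $W_w(\cC)$ and collecting the coefficient of $x^{n-1-j}y^j$ shows that $\cP(W_\cC)$ is the (averaged) enumerator of a code in $\fq^{m\times(n-1)}$ with weight-$j$ count $\qbin{n}{1}{}^{-1}\!\left(q^j\qbin{n-j}{1}{}W_j(\cC)+\qbin{j+1}{1}{}W_{j+1}(\cC)\right)$, while $\cS(W_\cC)$ has weight-$j$ count $\qbin{n}{1}{}^{-1}\qbin{n-j}{1}{}W_j(\cC)$ (these are exactly the per-monomial contributions already isolated in the proof of the previous theorem, up to the $q^{n-1}$ normalization in the shortening case). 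Since $\cC$ has minimum distance $d$, the punctured enumerator has minimum distance $d-1$ and the shortened one has minimum distance $d$, so I form $\cW^{\cP}_\cC(T)=(q^m-1)^{-1}\sum_{j=d-1}^{n-1}\qbin{n-1}{j}{}^{-1}(\text{weight-}j\text{ count})\,T^{j-d+1}$ and $\cW^{\cS}_\cC(T)=(q^m-1)^{-1}\sum_{j=d}^{n-1}\qbin{n-1}{j}{}^{-1}(\text{weight-}j\text{ count})\,T^{j-d}$.

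The computational heart is the identity $\qbin{n-1}{j}{}^{-1}\qbin{n}{1}{}^{-1}=\qbin{n}{j}{}^{-1}\qbin{n-j}{1}{}^{-1}$, immediate from $\qbin{n}{j}{}=\qbin{n-1}{j}{}\tfrac{q^n-1}{q^{n-j}-1}$ and $\qbin{n}{1}{}=\tfrac{q^n-1}{q-1}$, together with its parallel form $\qbin{n-1}{j}{}^{-1}\qbin{n}{1}{}^{-1}\qbin{j+1}{1}{}=\qbin{n}{j+1}{}^{-1}$. Applying the first identity to the shortening sum cancels the $\qbin{n-j}{1}{}$ factors and yields $\cW^{\cS}_\cC(T)=(q^m-1)^{-1}\sum_{j=d}^{n-1}\qbin{n}{j}{}^{-1}W_j(\cC)T^{j-d}$, which is $\cW_\cC(T)$ with the single $j=n$ term (equal to $(q^m-1)^{-1}W_n(\cC)T^{n-d}$, since $\qbin{n}{n}{}=1$) removed; that is statement (2). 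For puncturing, the two identities turn the two pieces of $\cW^{\cP}_\cC(T)$ into $(q^m-1)^{-1}\sum_{j=d-1}^{n-1}q^j\qbin{n}{j}{}^{-1}W_j(\cC)T^{j-d+1}$ and $(q^m-1)^{-1}\sum_{j=d-1}^{n-1}\qbin{n}{j+1}{}^{-1}W_{j+1}(\cC)T^{j-d+1}$. Reindexing $i=j+1$ identifies the second piece with $\cW_\cC(T)$; in the first piece the $j=d-1$ term vanishes because $W_{d-1}(\cC)=0$, so it equals $(q^m-1)^{-1}\sum_{j=d}^{n-1}q^j\qbin{n}{j}{}^{-1}W_j(\cC)T^{j-d+1}$, which is $q^dT\cW_\cC(qT)$ minus its top term $(q^m-1)^{-1}q^nW_n(\cC)T^{n-d+1}$. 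Adding the two pieces gives statement (1).

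The main obstacle I anticipate is purely bookkeeping rather than conceptual: one must keep straight the two distinct minimum distances used to normalize the two averaged enumerators ($d-1$ for puncturing, $d$ for shortening), handle the boundary terms at $j=d-1$ and $j=n$ with care — this is precisely where the hypothesis $d(\cC)=d$ enters (through $W_{d-1}(\cC)=0$) and where the correction terms $(q^m-1)^{-1}W_n(\cC)q^nT^{n-d+1}$ and $(q^m-1)^{-1}W_n(\cC)T^{n-d}$ are produced — and verify the two $q$-binomial identities above without sign or index slips. Everything else reduces to routine reindexing of finite sums.
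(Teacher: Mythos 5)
Your proposal is correct and follows essentially the same route as the paper: apply $\cP$ and $\cS$ (as $q$-derivative operators) to $W_\cC$, read off the resulting weight distribution, form the normalized enumerator with the appropriate shifted minimum distance, and use the $q$-binomial identities $\qbin{n}{1}{}^{-1}\qbin{n-1}{j}{}^{-1}\qbin{n-j}{1}{}=\qbin{n}{j}{}^{-1}$ and $\qbin{n}{1}{}^{-1}\qbin{n-1}{j}{}^{-1}\qbin{j+1}{1}{}=\qbin{n}{j+1}{}^{-1}$ together with reindexing and the vanishing of $W_{d-1}(\cC)$ to collapse everything into $\cW_\cC(T)$, $q^dT\cW_\cC(qT)$, and the explicit correction terms. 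The only difference from the paper is that you spell out part (2), which the paper leaves as ``similar.''
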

\begin{proof}
	To show (1), we apply the operation $\cP=\qbin{n}{1}{}^{-1}\left( D_{q,x}+ D_{y}\right)$ to $W_{\cC}(x,y)$.
	\begin{eqnarray*}
		\cP(W_\cC(x,y)) &~=~& \qbin{n}{1}{}^{-1}\left(\qbin{n}{1}{} x^{n-1} + \sum_{i=d}^{n-1} W_{i}(\cC) q^i \qbin{n-i}{1}{} x^{n-i-1} y^{i} 
		+ \sum_{i=d}^{n}   W_{i} (\cC)\qbin{i}{1}{} x^{n-i} y^{i-1}\right)\\
		&~=~& x^{n-1} + \qbin{n}{1}{}^{-1}\left(\sum_{i=d}^{n-1} W_{i}(\cC)  q^i \qbin{n-i}{1}{} x^{n-i-1} y^{i}
		+ \sum_{i=d}^{n}   W_{i} (\cC) \qbin{i}{1}{}x^{n-i} y^{i-1}\right) \\  
		&~=~&    x^{n-1}  +\qbin{n}{1}{}^{-1} \sum_{i=d-1}^{n-1} \left(  W_{i}(\cC)  q^i \qbin{n-i}{1}{}  + W_{i+1} (\cC) \qbin{i+1}{1}{} \right)x^{n-1-i}y^i . 
	\end{eqnarray*}
	Therefore, the associated normalized polynomial $\cW_{\cC}^{\cP}(T)$ is given by:
	\begin{eqnarray*}
		 &~=~&  (q^m-1)^{-1}\qbin{n}{1}{}^{-1} \sum_{i=d-1}^{n-1} \left(  W_{i}(\cC)  q^i \qbin{n-i}{1}{}  + W_{i+1} (\cC) \qbin{i+1}{1}{} \right) \qbin{n-1}{i}{}^{-1} T^{i-d+1} \\
		&~=~&  (q^m-1)^{-1}\sum_{i=d-1}^{n-1} \left(  W_{i}(\cC)  q^{i} \qbin{n}{i}{}^{-1}  + W_{i+1} (\cC) \qbin{n}{i+1}{} ^{-1}\right)  T^{i-d+1} \\
		&~=~&  (q^m-1)^{-1}\sum_{i=d}^{n}  W_{i}(\cC)  q^{i} \qbin{n}{i}{}^{-1}T^{i-d+1} -(q^m-1)^{-1}q^nW_n(\cC) T^{n-d+1} + \\
		&  & \quad +~ (q^m-1)^{-1}\sum_{i=d}^{n} W_{i} (\cC) \qbin{n}{i}{} ^{-1}  T^{i-d},\\
		&~=~&  q^d T \cW_{\cC}(qT) +\cW_{\cC}(T) -(q^m-1)^{-1}q^nW_n(\cC) T^{n-d+1}.
	\end{eqnarray*}
	The proof of (2) is similar.
\end{proof}

Consider the following operations $\ga,\e$ on rational functions $f(T)$ in indeterminate $T$.
\begin{eqnarray}
     \ga f(T):=Tf(T)= \text{ and } \e f(T):= f(qT).
\end{eqnarray}

Now $\ga,\e$ form a $q$-commuting pair \cite{K97}, and obey the relations
\begin{eqnarray}\label{eq:qprod} \e \ga = q\ga \e,\; q\ga = \ga q,\; q \e= \e q, \end{eqnarray} 
with respect to composition. 
Then $\ga,\e$ are non-commuting operators with respect to the $q$-product determined by the relations in (\ref{eq:qprod})
and generate the $\Q$-algebra $\langle \ga,\e \rangle$, which acts on the space of rational functions.
We thus express Theorem \ref{th:nwe} (1) as
\begin{eqnarray}\label{eq:aleppunc}
\cW_{\cC}^{\cP}(T) \equiv (1 + q^d \ga \e)\cW_{\cC}(T) =  (1 + q^{d-1} \e \ga )\cW_{\cC}(T) \mod T^{n-d+1}, 
\end{eqnarray} 
An immediate corollary of Theorem \ref{th:nwe} is as follows.
\begin{cor}
	Let $\cC$ have minimum distance $d$. For $0\leq i\leq d$ we have that,
	\begin{enumerate}
		\item $(1+q^d \ga \e)^i\cW_{\cC}^{\cS}(T) \equiv (1+q^d \ga \e)^i\cW_{\cC}(T)  \mod T^{n-d} $, 
		\item $(1+q^d \ga \e)^i\cW_{\cC}^{\cP}(T) \equiv (1+q^d \ga \e)^{i+1}\cW_{\cC}(T)  \mod T^{n-d+1} $.
	\end{enumerate}
	In particular, for $d_\cS=d,d_\cP=d-1$ and $n_\cP = n_\cS= n-1$ we have
	\begin{enumerate}
		\item $(1+q^d \ga \e)^{d_\cS}\cW_{\cC}^{\cS}(T)  \equiv (1+q^d \ga \e)^{d}\cW_{\cC}(T)  \mod T^{n_\cS-d_\cS+1} $,
		\item $(1+q^d \ga \e)^{d_\cP}\cW_{\cC}^{\cP}(T)  \equiv (1+q^d \ga \e)^{d}\cW_{\cC}(T)  \mod T^{n_\cP-d_\cP+1}.$
	\end{enumerate}
\end{cor}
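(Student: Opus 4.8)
The plan is to deduce the corollary from Theorem~\ref{th:nwe} together with one structural observation: the operator $\ga\e$ strictly raises $T$-degrees, since $\ga\e(T^j) = \ga(q^j T^j) = q^j T^{j+1}$. Hence $q^d\ga\e$ carries the ideal $T^N\Q[T]$ into $T^{N+1}\Q[T]$, so the linear operator $1+q^d\ga\e$ --- and therefore every power $(1+q^d\ga\e)^i$ --- maps $T^N\Q[T]$ into itself. Combining this with linearity of $\ga$ and $\e$, we get that $f \equiv g \pmod{T^N}$ implies $(1+q^d\ga\e)^i f \equiv (1+q^d\ga\e)^i g \pmod{T^N}$. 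I would record this preservation principle as a one-line remark; it is the only property of $\ga,\e$ needed.

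Given this, claim~(1) is immediate: Theorem~\ref{th:nwe}(2) states $\cW_{\cC}^{\cS}(T) = \cW_{\cC}(T) - (q^m-1)^{-1}W_n(\cC)\,T^{n-d}$, so $\cW_{\cC}^{\cS}(T) \equiv \cW_{\cC}(T) \pmod{T^{n-d}}$, and applying $(1+q^d\ga\e)^i$ to both sides and invoking the preservation principle gives exactly~(1). For claim~(2), I would start from the reformulation (\ref{eq:aleppunc}) of Theorem~\ref{th:nwe}(1), namely $\cW_{\cC}^{\cP}(T) \equiv (1+q^d\ga\e)\cW_{\cC}(T) \pmod{T^{n-d+1}}$ --- valid because, by Theorem~\ref{th:nwe}(1), the difference of the two sides is the single monomial $(q^m-1)^{-1}W_n(\cC)q^n T^{n-d+1}$ --- and apply $(1+q^d\ga\e)^i$ to both sides, obtaining $(1+q^d\ga\e)^i\cW_{\cC}^{\cP}(T) \equiv (1+q^d\ga\e)^{i+1}\cW_{\cC}(T) \pmod{T^{n-d+1}}$, which is~(2). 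The hypothesis $0 \le i \le d$ is not used in establishing these two congruences; it only guarantees that the stated moduli are not so small as to discard informative coefficients, and I would remark on this.

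The "in particular" statements are then the special cases $i = d_\cS = d$ of~(1) and $i = d_\cP = d-1$ of~(2). By Corollary~\ref{cor:mrd} the punctured and shortened codes have length $n_\cP = n_\cS = n-1$ and minimum distances $d_\cP = d-1$, $d_\cS = d$, so $n_\cS - d_\cS + 1 = n-d$ and $n_\cP - d_\cP + 1 = n-d+1$, which match the moduli appearing in~(1) and~(2) respectively. Since the whole argument is a short bookkeeping on top of Theorem~\ref{th:nwe}, I do not expect a genuine obstacle; the one point that needs care is keeping the normalizations consistent --- $\cW_{\cC}$ is formed with the shift $T^{i-d}$, while $\cW_{\cC}^{\cS}$ and $\cW_{\cC}^{\cP}$, being the normalized weight enumerators of the length-$(n-1)$ codes, carry shifts $d_\cS = d$ and $d_\cP = d-1$, and it is exactly this that makes the powers of $T$ in Theorem~\ref{th:nwe} line up.
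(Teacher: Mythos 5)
Your argument is correct, and since the paper presents this as ``an immediate corollary'' of Theorem~\ref{th:nwe} with no written proof, your treatment supplies exactly the verification that the paper leaves implicit: from $\ga\e(T^j)=q^jT^{j+1}$ one sees that $1+q^d\ga\e$ maps the ideal $(T^N)$ into itself, so congruences modulo $T^N$ are preserved under powers of this operator, and then part~(1) follows from $\cW_{\cC}^{\cS}(T)\equiv\cW_{\cC}(T)\pmod{T^{n-d}}$ and part~(2) from (\ref{eq:aleppunc}), with the ``in particular'' clauses being the substitutions $i=d_\cS=d$ and $i=d_\cP=d-1$ together with $n_\cS-d_\cS+1=n-d$ and $n_\cP-d_\cP+1=n-d+1$. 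Your observations that the hypothesis $0\le i\le d$ plays no role in deriving the congruences themselves, and that the shift in the normalization of $\cW_{\cC}^{\cP}$ (by $d_\cP=d-1$) and $\cW_{\cC}^{\cS}$ (by $d_\cS=d$) is what makes the $T$-powers in Theorem~\ref{th:nwe} align, are both accurate and worth keeping as remarks.
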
 
We write $\cW_{\cC}^{\cP^r}(T)$ to denote the normalized weight enumerator that results from applying the puncturing operation
$r$ times to $\cW_{\cC}(T)$.
Note that the operators $(1+q^r\ga\e)$ and $(1+q^s\ga\e)$ commute, so there is no ambiguity in the following statement.
\begin{cor}\label{cor:puncs}
	Let $\cC$ have minimum distance $d$.
	Then 
	\begin{eqnarray*}
	\cW_{\cC}^{\cP^r}(T) & \equiv & \prod_{j=0}^{r-1}(1+q^{d-j} \ga \e)\cW_{\cC}(T) \mod T^{n-d+1}\\
	                     & =      & \sum_{j=0}^r q^{j(d-r+j)} \qbin{r}{j}{}T^j \cW_{\cC}(q^j T). 
	\end{eqnarray*}
\end{cor}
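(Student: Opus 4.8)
The plan is to establish the first congruence by induction on $r$ and then to expand the resulting operator product into the closed form. The base case $r=1$ is exactly \eqref{eq:aleppunc}. For the inductive step, suppose the claim holds for $r-1$, so $\cW_{\cC}^{\cP^{r-1}}(T) \equiv \prod_{j=0}^{r-2}(1+q^{d-j}\ga\e)\cW_{\cC}(T) \mod T^{n-d+1}$. One more puncturing lowers the minimum distance from $d$ to $d-(r-1)$ and the length to $n-(r-1)$; applying Theorem \ref{th:nwe} (1) to the code after $r-1$ puncturings, whose minimum distance is $d-r+1$, gives $\cW_{\cC}^{\cP^r}(T) \equiv (1+q^{d-r+1}\ga\e)\cW_{\cC}^{\cP^{r-1}}(T) \mod T^{(n-r+1)-(d-r+1)+1} = T^{n-d+1}$, where I must check that the truncation modulus is stable, i.e.\ that reducing mod $T^{n-d+1}$ before and after the operator application is consistent. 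Since $(1+q^{d-r+1}\ga\e)$ raises the $T$-degree, congruence mod $T^{n-d+1}$ is preserved under it, so the induction closes and $\cW_{\cC}^{\cP^r}(T) \equiv \prod_{j=0}^{r-1}(1+q^{d-j}\ga\e)\cW_{\cC}(T) \mod T^{n-d+1}$.

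For the second equality I would expand the noncommutative product $\prod_{j=0}^{r-1}(1+q^{d-j}\ga\e)$ using the $q$-commutation relations \eqref{eq:qprod}. The key identity is a $q$-binomial theorem: because $\ga$ and $\e$ $q$-commute via $\e\ga = q\ga\e$, a product $\prod_{j=0}^{r-1}(1+c_j\ga\e)$ with geometric coefficients $c_j = q^{d-j}$ collapses to $\sum_{j=0}^r q^{\binom{j}{2}}\left(\prod_{\text{suitable}} c\right)\qbin{r}{j}{}(\ga\e)^j$ after normal-ordering. Concretely, $(\ga\e)^j$ normal-orders to $q^{\binom{j}{2}}\ga^j\e^j$ by repeated use of $\e\ga = q\ga\e$, and $\ga^j\e^j f(T) = T^j f(q^j T)$. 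Matching the exponent of $q$: the coefficient of $(\ga\e)^j$ coming from choosing $j$ of the $r$ factors $q^{d-i}\ga\e$ should, after $q$-normal ordering, sum to $q^{j(d-r+j)}\qbin{r}{j}{}$. I would verify this by the standard argument that the generating identity $\prod_{i=0}^{r-1}(1+q^{d-i}t) = \sum_{j=0}^r q^{jd - \binom{r}{2} + \binom{r-j}{2} + \binom{j}{2}}\qbin{r}{j}{}t^j$ (a form of the $q$-binomial theorem), combined with the $q^{\binom{j}{2}}$ from normal-ordering $(\ga\e)^j$, produces precisely the exponent $j(d-r+j)$; this is a bookkeeping check on binomial-coefficient exponents.

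The main obstacle will be the exponent arithmetic in the second step: one must be careful that the $q^{\binom{j}{2}}$ picked up in normal-ordering $(\ga\e)^j \mapsto q^{\binom{j}{2}}\ga^j\e^j$ combines correctly with the exponents from selecting a $j$-subset of $\{q^d,q^{d-1},\dots,q^{d-r+1}\}$ and with the Cauchy/$q$-binomial identity, to land on $q^{j(d-r+j)}$ rather than some shifted power. A clean way to sidestep sign/offset errors is to first prove the scalar identity $\prod_{j=0}^{r-1}(1+q^{d-j}z) = \sum_{j=0}^{r}q^{j(d-r+j)}\qbin{r}{j}{}z^j$ by induction on $r$ (using the Pascal recursion $\qbin{r}{j}{} = \qbin{r-1}{j}{} + q^{r-j}\qbin{r-1}{j-1}{}$), and only afterwards substitute $z = \ga\e$ while accounting for noncommutativity through the normal-ordering rule $(\ga\e)^j = q^{\binom{j}{2}}\ga^j\e^j$ — but one then checks that the scalar identity must be stated with $z$ replaced by the normal-ordered $(\ga\e)$, so in fact the cleanest route is to prove directly that $\prod_{j=0}^{r-1}(1+q^{d-j}\ga\e) = \sum_{j=0}^r q^{j(d-r+j)}\qbin{r}{j}{}\ga^j\e^j$ in $\langle\ga,\e\rangle$ by induction on $r$, and finally apply $\ga^j\e^j f(T) = T^j f(q^jT)$. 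Everything else is routine.
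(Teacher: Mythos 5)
Your approach matches the paper's: the first congruence is obtained by iterating \eqref{eq:aleppunc} (the paper says ``repeated applications,'' which is exactly your induction, and your check that $(1+q^{d-r+1}\ga\e)$ preserves congruence mod $T^{n-d+1}$ is the right justification), and the second equality comes from expanding the operator product via the $q$-binomial theorem and the normal-ordering rule $(\ga\e)^j = q^{\binom{j}{2}}\ga^j\e^j$, then applying $\ga^j\e^j f(T)=T^jf(q^jT)$.

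One small arithmetic slip in your intermediate scalar identity: $\prod_{i=0}^{r-1}(1+q^{d-i}t)$ expands to $\sum_j q^{\binom{j}{2}+j(d-r+1)}\qbin{r}{j}{}t^j = \sum_j q^{jd-\binom{r}{2}+\binom{r-j}{2}}\qbin{r}{j}{}t^j$, i.e.\ \emph{without} your extra $+\binom{j}{2}$ in the exponent; that extra $\binom{j}{2}$ is precisely what the normal-ordering step contributes, so as written your two steps would double-count it. But you flag exactly this pitfall and propose the correct resolution (prove $\prod_{j=0}^{r-1}(1+q^{d-j}\ga\e) = \sum_j q^{j(d-r+j)}\qbin{r}{j}{}\ga^j\e^j$ directly), and your final operator identity is correct. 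The paper instead factors out $q^{d-r+1}$ to reduce to the standard form $\prod(1+q^jy)=\sum q^{\binom{j}{2}}\qbin{r}{j}{}y^j$, substitutes $y=q^{d-r+1}\ga\e$ (legitimate since all factors commute, being polynomials in the single operator $\ga\e$), then normal-orders; this avoids having to reprove a shifted $q$-binomial identity.
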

\begin{proof}
	The first equation is derived by repeated applications of (\ref{eq:aleppunc}). The well known identity
	$\prod_{j=0}^{r-1}(1+q^jy) = \sum_{j=0}^{r}q^{\binom{j}{2}} \qbin{r}{j}{}y^j$, along with the fact that 
	$(\ga \e)^j = q^{\binom{j}{2}}\ga^j\e^j$, yields the equation
	$$\prod_{j=0}^{r-1}(1+q^j(q^{d-r+1} \ga \e)) = \sum_{j=0}^{r}q^{j(d-r+j)} \qbin{r}{j}{} \ga^j \e^j.$$
	Applying this to $\cW_{\cC}(T)$ results in the 2nd equation.
\end{proof}	

\begin{example}\label{ex:mrd2ws}
	Let $\cM_{m\times n,d}(T)$ be the normalized weight enumerator of an ${\mathbb F}_2$-$[m \times n,m(n-d+1),d]$ code. Then successively puncturing
	$\cM_{7\times 7,4}(T)$, we arrive at the normalized weight enumerator of the whole space $\cW_{\F_2^{7 \times 4}}(T) = \cM_{7\times 4,1}(T)$.
	\begin{eqnarray*}
	    \cM_{7\times 7,4}(T)& = & 1 + 98 T + 9688 T^2 + 610112 T^3 \\
	    \cM_{7\times 6,3}(T)& = & 1 + 114 T + 12824 T^2 + 1230144 T^3\\
	                        &\equiv& (1+2^4\ga \e) \cM_{7\times 7,4}(T) \mod T^4\\
	                        & = & 1 + 114 T + 12824 T^2 + 1230144 T^3 + 78094336 T^4 \\ 
	   \cM_{7\times 5,2}(T) & = & 1 + 122 T + 14648 T^2 + 1640512 T^3\\
	                        &\equiv& (1+2^3\ga \e) \cM_{7\times 6,3}(T) \mod T^4\\
	                        & = & 1 + 122 T + 14648 T^2 + 1640512 T^3 + 78729216 T^4 \\
	                        &\equiv& (1+2^3\ga \e) (1+2^4\ga \e) \cM_{7\times 7,4}(T) \mod T^4\\
	                        & = & 1 + 122 T + 14648 T^2 + 1640512 T^3 + 156823552 T^4 + 9996075008 T^5 \\ 
	\cW_{\F_2^{7 \times 4}}(T) & = & 1 + 126 T + 15624 T^2 + 1874880 T^3 \\
	                           &\equiv& (1+2^2\ga \e) \cM_{7\times 5,2}(T) \mod T^4\\ 
	                           & = & 1 + 126 T + 15624 T^2 + 1874880 T^3 + 52496384 T^4 \\
	                           &\equiv& (1+2^2\ga \e) (1+2^3\ga \e) \cM_{7\times 6,3}(T) \mod T^4\\
	                           &\equiv& (1+2^2\ga \e)(1+2^3\ga \e) (1+2^4\ga \e) \cM_{7\times 7,4}(T) \mod T^4\\ 
	                           & = & 1 + 126 T + 15624 T^2 + 1874880 T^3 \\
	                           & + & 209319936 T^4 + 20032782336 T^5 + 1279497601024 T^6.            
	\end{eqnarray*}
\end{example}

If we consider the action of $\ga$ and $\e$ on $\Q[T]/\langle T^s\rangle$ for some positive integer $s$, then they may be represented as $s \times s$ rational matrices:
$$
\ga=\left( \begin{array}{ccccc}
0 & 0       & \cdots & 0      & 0      \\
1 &      0  & \cdots & 0      & 0     \\
\vdots & \ddots  & \vdots & \vdots & \vdots \\
0 &  \cdots  & 1   & 0      & 0 \\
0 &  \cdots   & 0 & 1      & 0 \\
\end{array}  \right) \text{ and }
\e=\text{diag}(1,q,q^2,...,q^{s-1}),
$$
so that $\langle \ga, \e \rangle$ form a sub-algebra of $\Q^{s \times s}$.
Then as an element of $\Q^{s \times s}$, $\ga$ is nilpotent so the inverse of  the operator $(1+q^t \e \ga)$ exists and is given by 
$$(1+q^t \e \ga)^{-1} = 1-q^t \e \ga +(q^t \e \ga)^2+\cdots +(-1)^{s-1}(q^t \e \ga)^{s-1}.$$ 
For example, with $s=3$, we have
$$1 + q^t \e \ga = 
\left( \begin{array}{ccc}
1       & 0       & 0 \\
q^{t+1} & 1       & 0 \\
0       & q^{t+2} & 1
\end{array} \right) \text{ and }
(1 + q^t \e \ga)^{-1} = 
\left( \begin{array}{ccc}
1       & 0       & 0 \\
-q^{t+1} & 1       & 0 \\
q^{2t+3}       & -q^{t+2} & 1
\end{array} \right) 
$$

Recall the following formulae (see \cite{GM04}): 
$$(T;q)_{\ell}: = \prod_{j=0}^{\ell-1}(1-q^j T) \text{ and }(T;q)^{-1}_{\ell}=\sum_{j=0}^{\infty}\qbin{\ell+j-1}{j}{}T^j.$$
Then $\prod_{j=0}^{r-1}(1+q^j(q^{d-r+1}\ga \e)) = (-q^{d-r+1}\ga \e;q)_r$ and so
$$(-q^{d-r+1}\ga \e;q)_r^{-1} = \sum_{j=0}^{\infty} (-1)^j\qbin{r+j-1}{j}{}q^{j(d-r+1)+\binom{j}{2}} \ga^j\e^j. $$

\begin{lem}\label{lem:puncs}
	Let $\cC$ have minimum distance $d$.
	Then 
	\begin{eqnarray*}
		\cW_{\cC}(T) & \equiv & (-q^{d-r+1}\ga \e;q)_r^{-1} \cW_{\cC}^{\cP^r}(T)  \mod T^{n-d+1}\\
		             & =      & \sum_{j=0}^\infty (-1)^jq^{j(d-r+1)+\binom{j}{2}} \qbin{r+j-1}{j}{}T^j \cW_{\cC}^{\cP^r}(q^j T). 
	\end{eqnarray*}
\end{lem}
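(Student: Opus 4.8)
The statement is essentially the formal inversion of Corollary \ref{cor:puncs}, so the plan is to exploit the operator algebra $\langle \ga, \e\rangle$ rather than to manipulate coefficients directly. First I would recall from Corollary \ref{cor:puncs} that, working modulo $T^{n-d+1}$,
$$\cW_{\cC}^{\cP^r}(T) \equiv \prod_{j=0}^{r-1}(1+q^{d-j}\ga\e)\,\cW_{\cC}(T) = (-q^{d-r+1}\ga\e;q)_r\,\cW_{\cC}(T),$$
where the last equality is the identity $\prod_{j=0}^{r-1}(1+q^j y) = (-y;q)_r$ with $y = q^{d-r+1}\ga\e$ already noted in the excerpt. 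So it suffices to apply the inverse operator $(-q^{d-r+1}\ga\e;q)_r^{-1}$ to both sides and check that this inverse is the operator written in the statement.

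Second, I would justify that $(-q^{d-r+1}\ga\e;q)_r$ is invertible on the relevant space. Since we are computing modulo $T^{n-d+1}$, we may view everything inside $\Q[T]/\langle T^{n-d+1}\rangle$, where $\ga$ acts nilpotently (as displayed in the matrix representation preceding the lemma), hence $\e\ga$ is nilpotent, hence $1 + q^t\e\ga$ — and therefore any finite product of such factors — is unipotent and invertible. This makes the inversion legitimate and shows the resulting inverse operator is a polynomial in $\ga\e$, so only finitely many terms of the displayed infinite sum actually contribute.

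Third, I would compute the inverse explicitly. Using the formula $(T;q)_\ell^{-1} = \sum_{j\ge 0}\qbin{\ell+j-1}{j}{}T^j$ quoted just before the lemma, applied with the "variable" $-q^{d-r+1}\ga\e$ in place of $T$ and $\ell = r$, together with $(\ga\e)^j = q^{\binom{j}{2}}\ga^j\e^j$, I get
$$(-q^{d-r+1}\ga\e;q)_r^{-1} = \sum_{j=0}^{\infty}(-1)^j q^{j(d-r+1)+\binom{j}{2}}\qbin{r+j-1}{j}{}\ga^j\e^j,$$
which is exactly the operator identity stated in the excerpt immediately before the lemma. Applying this to $\cW_{\cC}^{\cP^r}(T)$ and using $\ga^j\e^j f(T) = T^j f(q^j T)$ gives the second displayed line of the lemma, and the congruence in the first line follows since applying the inverse to both sides of the Corollary \ref{cor:puncs} identity is valid modulo $T^{n-d+1}$.

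\textbf{Main obstacle.} The only real point requiring care is the bookkeeping around working modulo $T^{n-d+1}$: one must confirm that $\cP^r$ lowers $n$ to $n-r$ and $d$ to $d-r$ consistently (so that the truncation modulus is unchanged under the $r$-fold puncturing, matching Corollary \ref{cor:puncs}), and that the infinite sum is interpreted as acting on truncated polynomials so that it is actually a finite sum. Beyond that, the argument is a direct transcription of the $q$-series inversion $(-y;q)_r\cdot(-y;q)_r^{-1}=1$ into the operator setting, with the substitution $y\mapsto q^{d-r+1}\ga\e$ and the $q$-commutation relations (\ref{eq:qprod}) ensuring $\ga$ and $\e$ behave like a single $q$-variable under these manipulations.
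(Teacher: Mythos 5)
Your proposal is correct and matches the paper's (implicit) argument: the paper states the operator identity $(-q^{d-r+1}\ga\e;q)_r^{-1} = \sum_{j\geq 0}(-1)^j q^{j(d-r+1)+\binom{j}{2}}\qbin{r+j-1}{j}{}\ga^j\e^j$ immediately before the lemma and regards the lemma as a direct consequence of applying that inverse to Corollary \ref{cor:puncs}, which is precisely what you do, including the nilpotence/invertibility check via the matrix representation and the bookkeeping that $n-d$ is preserved under $r$-fold puncturing.
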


In particular, puncturing is an invertible operation on normalized weight enumerators, modulo $T^{n-d+1}$.

\begin{example}

As $\prod_{j=0}^{i-1}(q^m-q^j)\qbin{\ell}{i}{}$ is the number of $m \times \ell $ matrices of rank $i$, we see that 
$$\cW_{\fq^{m \times \ell}}(T) = \sum_{i=1}^{\ell} \prod_{j=1}^{i-1}(q^m-q^j) T^{i-1}.$$
As in Example \ref{ex:mrd2ws}, the full space $\fq^{m \times \ell}$ is an MRD code with parameters $[m \times \ell,m\ell,1]$ and can be obtained by successive puncturings of an MRD code. 
Let $\cM(T)$ be the normalized weight enumerator of an MRD \\$[m \times (n+r),m(n-d+1),d+r ]$ code for some non-negative integer $r$.
Then the previous observations along with Corollary \ref{cor:puncs} implies that
	\begin{eqnarray*}
	\cM(T) &\equiv& \sum_{i=1}^{n-d+1} \prod_{j=1}^{i-1}(q^m-q^j) \prod_{j=0}^{d+r-1}(1+q^{d+r-j}\ga \e)^{-1} T^{i-1} \mod T^{n-d+1}  \\
	       &=     & (q^m-1)^{-1}\sum_{i=1}^{n-d+1} q^{im}(q^{-m};q)_i (-q^{d-r+1} \ga \e;q)_r^{-1} T^{i-1}\\
	       & =    & (q^m-1)^{-1}\sum_{i=1}^{n-d+1} q^{im}(q^{-m};q)_i
	                \sum_{j=0}^\infty (-1)^{j}q^{j(d-r+i)+\binom{j}{2}} \qbin{r+j-1}{j}{}T^{i+j-1}.
	\end{eqnarray*}

\end{example}
\begin{lem}\label{lem:nwef}
	Let $\cC$ have minimum distance $d$. Then the normalized weight enumerator of the code $\cC$ satisfies
	$$\cW_{\cC}(T)  \equiv (q^m-1)^{-1}\sum_{i=0}^{n-d} W_{d+i}\qbin{n}{i+d}{}^{-1} \frac{T^{i}}{(T;q)_{i+1}} \mod T^{n-d+1}.$$
\end{lem}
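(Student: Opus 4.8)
The plan is to compare the two sides coefficient by coefficient up to degree $n-d$, after identifying the left-hand side with the zeta-function series $Z_{\cC}(T)=(q^m-1)^{-1}\sum_{i\ge 0}b_i(\cC)T^i$ and the denominators on the right with $(T;q)_{d+i+1}$. Expanding $(T;q)^{-1}_{d+j+1}=\sum_{s\ge 0}\qbin{d+j+s}{s}{}T^{s}$, the coefficient of $T^{a}$ on the right-hand side of the asserted formula is $(q^m-1)^{-1}\sum_{j=0}^{a}\qbin{d+a}{d+j}{}\qbin{n}{d+j}{}^{-1}W_{d+j}(\cC)$, while the coefficient of $T^{a}$ in $Z_{\cC}(T)$ is $(q^m-1)^{-1}b_a(\cC)$. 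So the proof reduces to the identity
\[
b_{a}(\cC)=\sum_{j=0}^{a}\qbin{d+a}{d+j}{}\,\qbin{n}{d+j}{}^{-1}W_{d+j}(\cC),\qquad 0\le a\le n-d,
\]
together with a check that the power-series manipulations are legitimate modulo $T^{n-d+1}$.

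To obtain this identity I would invert the Proposition relating $W_t(\cC)$ to the binomial moments. Using the transitivity relation $\qbin{n}{t}{}^{-1}\qbin{n}{i}{}\qbin{n-i}{t-i}{}=\qbin{t}{i}{}$ already used in its proof, that result reads, with $t=d+i$,
\[
\qbin{n}{d+i}{}^{-1}W_{d+i}(\cC)=\sum_{j=0}^{i}(-1)^{i-j}q^{\binom{i-j}{2}}\qbin{d+i}{d+j}{}\,b_{j}(\cC),\qquad 0\le i\le n-d .
\]
Since $\qbin{d+i}{d+j}{}\qbin{d+j}{d+k}{}=\qbin{d+i}{d+k}{}\qbin{i-k}{j-k}{}$ and $\sum_{s}(-1)^{s}q^{\binom{s}{2}}\qbin{N}{s}{}=\delta_{N,0}$ (the specialization at $T=1$ of $(T;q)_{N}=\sum_{s}(-1)^{s}q^{\binom{s}{2}}\qbin{N}{s}{}T^{s}$, i.e. M\"obius inversion $(\ref{eq:mobinv})$ on the subspace lattice), the lower-triangular matrices $\bigl(\qbin{d+i}{d+j}{}\bigr)$ and $\bigl((-1)^{i-j}q^{\binom{i-j}{2}}\qbin{d+i}{d+j}{}\bigr)$ are mutually inverse, so inverting the last display yields exactly the identity above. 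Substituting it into $Z_{\cC}(T)$, interchanging the two sums and setting $s=i-j$ then reproduces $(q^m-1)^{-1}\sum_{j}\qbin{n}{d+j}{}^{-1}W_{d+j}(\cC)\,T^{j}(T;q)^{-1}_{d+j+1}$.

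The point I expect to be the crux is the truncation bookkeeping. The Proposition ties $W_{d+i}(\cC)$ to $b_{0}(\cC),\dots,b_{i}(\cC)$ only for $0\le i\le n-d$ — for larger $i$ the factor $\qbin{n}{d+i}{}$ vanishes and the relation is vacuous — so the inversion, hence the coefficient identity, holds precisely in that range; for $a>n-d$ the continued values $b_{a}(\cC)=q^{k-m(n-a-d)}-1$ generally do not equal $\sum_{j}\qbin{d+a}{d+j}{}\qbin{n}{d+j}{}^{-1}W_{d+j}(\cC)$, which is why the assertion is a congruence modulo $T^{n-d+1}$ and not an identity of rational functions, and why the geometric $q$-series expansion of each $(T;q)^{-1}_{d+j+1}$ may be invoked only after passing to $\Q[T]/(T^{n-d+1})$. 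One also notes that truncating the outer sum at $j=n-d$ changes nothing below degree $n-d$, since $T^{j}(T;q)^{-1}_{d+j+1}$ starts in degree $j$. The two Gaussian-binomial identities used are routine.
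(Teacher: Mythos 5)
Your argument is not a proof of the statement as printed: you have silently replaced $\cW_\cC(T)$ on the left by $Z_\cC(T)$, and $(T;q)_{i+1}$ in the denominators by $(T;q)_{d+i+1}$. Those replacements are in fact necessary. As written, the two sides already disagree in degree~$1$: the coefficient of $T$ on the right is $(q^m-1)^{-1}\bigl(W_d\qbin{n}{d}{}^{-1}+W_{d+1}\qbin{n}{d+1}{}^{-1}\bigr)$, whereas $\cW_\cC(T)$ has coefficient $(q^m-1)^{-1}W_{d+1}\qbin{n}{d+1}{}^{-1}$, and $W_d\neq 0$ by definition of the minimum distance. The paper's own proof takes the same coefficient-extraction route you do, but closes by asserting $\sum_{i}W_{d+i}\qbin{n}{d+i}{}^{-1}\qbin{j}{i}{}=W_{d+j}\qbin{n}{d+j}{}^{-1}$, which is simply false; the identity that \emph{does} hold is yours, namely $b_a(\cC)=\sum_{j=0}^{a}\qbin{d+a}{d+j}{}\qbin{n}{d+j}{}^{-1}W_{d+j}(\cC)$, and it is exactly what the coefficient of $T^a$ in $\sum_i W_{d+i}\qbin{n}{d+i}{}^{-1}T^i/(T;q)_{d+i+1}$ produces. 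So the conclusion you actually establish is $Z_\cC(T)\equiv(q^m-1)^{-1}\sum_{i=0}^{n-d}W_{d+i}\qbin{n}{d+i}{}^{-1}T^i/(T;q)_{d+i+1}\pmod{T^{n-d+1}}$, and that is the corrected form of the lemma.

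Granting the corrected reading, your proof is sound. The inversion of the Proposition via the mutually inverse triangular matrices $\bigl(\qbin{d+i}{d+j}{}\bigr)$ and $\bigl((-1)^{i-j}q^{\binom{i-j}{2}}\qbin{d+i}{d+j}{}\bigr)$ is correct, using exactly the two $q$-binomial facts you cite; and your truncation remarks (the relation holds only for $0\le a\le n-d$, beyond which $b_a$ is given by the extended formula) are accurate. You can shorten the argument by avoiding inversion altogether: counting pairs $(X,U)$ with $0\neq X\in\cC$, $\dim U=n-a-d$, $U\le X^\perp$ gives $B_{a+d}(\cC)=\sum_{t\ge d}W_t(\cC)\qbin{n-t}{n-a-d}{}$ directly; dividing by $\qbin{n}{a+d}{}$ and applying $\qbin{n}{a+d}{}\qbin{a+d}{t}{}=\qbin{n}{t}{}\qbin{n-t}{a+d-t}{}$ yields $b_a(\cC)=\sum_{t}W_t(\cC)\qbin{a+d}{t}{}\qbin{n}{t}{}^{-1}$ at once. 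Either way, your proof supplies the substantive step that the paper's proof omits.
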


\begin{proof}
	We have 
	$$\frac{T^{i}}{(T;q)_{i+1}} = \sum_{j=0}^{\infty}\qbin{i+j}{j}{}T^{j+i} = \sum_{j=i}^{\infty}\qbin{j}{i}{}T^{j},$$
	which yields
	\begin{eqnarray*}
	\sum_{i=0}^{n-d} W_{d+i}\qbin{n}{i+d}{}^{-1} \frac{T^{i}}{(T;q)_{i+1}}
	=\sum_{i=0}^{n-d} W_{d+i}\qbin{n}{i+d}{}^{-1} \sum_{j=i}^{\infty}\qbin{j}{i}{}T^{j} \\
	= \sum_{j=0}^{\infty}T^{j} \sum_{i=0}^{n-d} W_{d+i}\qbin{n}{i+d}{}^{-1}\qbin{j}{i}{} = 
	  \sum_{j=0}^{\infty}T^{j}  W_{d+j}\qbin{n}{j+d}{}^{-1} 
	\end{eqnarray*}
\end{proof}

\section{Zeroes of the Zeta Polynomial}

For a self-dual code $\Cc$, Theorem \ref{th54} shows that the (in general complex) zeroes of the zeta polynomial $P_\Cc(T)$ occur in pairs
$( \alpha, 1/(q^m \alpha) )$. The two zeroes in a pair are of the same absolute value if and only if $|\alpha| = (q^m)^{-1/2}.$ 
Writing $\zeta(s) = Z(T=(q^m)^{-s}),$ this is the case if and only if as zeroes of $\zeta(s)$ they satisfy $\mathrm{Re}\,s = 1/2$. The latter is the critical line for the classical Riemann zeta function as well as for the zeta function of a curve over a finite field, which we recall here for convenience of the reader.

If $C$ is a non-singular projective curve defined over $\mathbb{F}_q$, for $k\geq 1$, denote by $N_k$ the number of $\mathbb{F}_{q^k}$-rational points of $C$, i.e., the cardinality of $C(\mathbb{F}_{q^ k})$. The zeta-function of $C$ is the formal power series

$$
Z(C,T)=exp\left(\sum_{k\geq 1}\frac{N_k}{k}T^k\right).
$$

This expression is well defined as a formal power series (cf. \cite[Appendix C]{H77}) and satisfies the following properties:
\begin{thm}[Weil, Dwork]The zeta function of any non-singular projective curve of genus $g$ can be expressed as
$$
Z(C,T)=\frac{P(T)}{(1-T)(1-qT)},
$$
with $P(T)\in\mathbb{Z}[T]$ a polynomial of degree $2g$, called the zeta-polynomial of $C$. Moreover, for each root $\omega_i$ of $P(T)$ ($1\leq i\leq 2g$), we have
$$
|\omega|=q^{1/2}.
$$
\end{thm}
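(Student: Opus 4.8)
The plan is to follow the classical route to the Weil conjectures for curves: extract rationality and the functional equation from the Riemann--Roch theorem, and the statement $|\omega_i|=q^{1/2}$ from a positivity principle. (Dwork's $p$-adic method gives an alternative proof of rationality, valid for varieties of any dimension, but for curves the Riemann--Roch argument is faster.) First I would rewrite the zeta function as a sum over effective divisors. Expanding the defining series as an Euler product over closed points, $Z(C,T)=\prod_{x}(1-T^{\deg x})^{-1}=\sum_{D\geq 0}T^{\deg D}$, the last sum ranging over effective divisors $D$ of $C$. Grouping effective divisors by linear equivalence class and using that a class $[D]$ of degree $d\geq 0$ contains exactly $(q^{\ell(D)}-1)/(q-1)$ effective divisors, where $\ell(D)=\dim_{\mathbb{F}_q}L(D)$, one gets
\[ (q-1)\,Z(C,T)\;=\;\sum_{d\geq 0}\Big(\sum_{\deg[D]=d}q^{\ell(D)}-h\Big)T^d, \]
where $h=|\mathrm{Pic}^0(C)|$ is finite and, by F.~K.~Schmidt's theorem, $\mathrm{Pic}^d(C)$ is a torsor under $\mathrm{Pic}^0(C)$ for every $d\in\mathbb{Z}$. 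By Riemann--Roch, $\ell(D)=d+1-g$ once $d>2g-2$, so the tail $\sum_{d\geq 2g-1}$ is a rational function with denominator $(1-T)(1-qT)$ while the head $\sum_{0\leq d\leq 2g-2}$ is a polynomial; assembling these yields $Z(C,T)=P(T)/\big((1-T)(1-qT)\big)$. Integrality is immediate: the coefficients $A_n$ of $Z(C,T)=\sum_n A_nT^n$ are non-negative integers (they count effective divisors), so $P(T)=(1-T)(1-qT)\sum_n A_nT^n\in\mathbb{Z}[T]$ with $P(0)=A_0=1$; the closed form of $A_n$ for $n\geq 2g-1$ shows the coefficient of $T^n$ in $P$ vanishes for $n\geq 2g+1$, hence $\deg P\leq 2g$.

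Next I would establish the functional equation. Serre duality gives $\ell(D)-\ell(K-D)=d+1-g$ with $\deg K=2g-2$, and the involution $[D]\mapsto[K-D]$ matches classes of degree $d$ with classes of degree $2g-2-d$; feeding this into the divisor sum above yields $Z\!\left(C,\tfrac{1}{qT}\right)=q^{1-g}T^{2-2g}Z(C,T)$, equivalently $P(T)=q^gT^{2g}P\!\left(\tfrac{1}{qT}\right)$. This symmetry forces $\deg P=2g$ exactly (leading coefficient $q^g$) and pairs the reciprocal roots $\omega_i$ of $P$ with $q/\omega_i$.

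For the last and deepest part, write $P(T)=\prod_{i=1}^{2g}(1-\omega_iT)$; comparing with $Z(C,T)=\exp\!\big(\sum_{k\geq 1}N_kT^k/k\big)$ gives $N_k=q^k+1-\sum_{i=1}^{2g}\omega_i^k$, so $|\omega_i|=q^{1/2}$ for all $i$ is equivalent to the uniform estimate $|N_k-q^k-1|\leq 2g\,q^{k/2}$ for every $k\geq 1$. To prove this I would pass to the Jacobian $J=\mathrm{Jac}(C)$ over $\overline{\mathbb{F}_q}$ together with the $q$-power Frobenius endomorphism $\pi\in\mathrm{End}(J)$: the $\omega_i$ are exactly the roots of the characteristic polynomial of $\pi$ (a monic integer polynomial of degree $2g$, namely $T^{2g}P(1/T)$), equivalently the eigenvalues of $\pi$ on the $\ell$-adic Tate module $T_\ell J\cong H^1_{\mathrm{et}}(C,\mathbb{Q}_\ell)$, and the relation $N_k=q^k+1-\sum_i\omega_i^k$ is the Lefschetz trace formula $\#\mathrm{Fix}(\pi^k\!\mid\!C)=\sum_{j=0}^2(-1)^j\mathrm{Tr}(\pi^k\!\mid\!H^j)$. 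The crucial input is that the Rosati involution $\dagger$ on $\mathrm{End}^0(J)$ attached to the canonical principal polarization is \emph{positive}, together with $\pi^\dagger\pi=\deg(\pi)=q$. Positivity forces the restriction of $\dagger$ to the commutative subalgebra $\mathbb{Q}[\pi]$ to be complex conjugation on each of its CM (or totally real) factors, so that under every embedding into $\mathbb{C}$ the identity $\pi\pi^\dagger=q$ becomes $\omega_i\overline{\omega_i}=q$, i.e. $|\omega_i|=q^{1/2}$. Equivalently, and in the spirit of Weil's original argument, one can work on the surface $S=C\times C$: $N_k$ is the intersection number $(\Gamma_{\pi^k}\cdot\Delta)$ of the graph of $\pi^k$ with the diagonal, and applying the Hodge index theorem (the intersection form on $\mathrm{NS}(S)$ is negative definite on the orthogonal complement of an ample class) to the primitive parts of $\Gamma_{\pi^k}$ and $\Delta$ produces the Castelnuovo--Severi inequality, which after expanding the bidegrees reads precisely $|N_k-q^k-1|\leq 2g\sqrt{q^k}$.

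I expect this last step to be the main obstacle. Rationality and the functional equation are essentially formal consequences of Riemann--Roch and Serre duality, but $|\omega_i|=q^{1/2}$ is the genuinely hard statement: it rests on the positivity of the Rosati involution on $\mathrm{End}^0(\mathrm{Jac}(C))$ (equivalently, the negative-definiteness of the intersection pairing on the primitive part of $\mathrm{NS}(C\times C)$), itself a nontrivial geometric positivity theorem, and one must be careful that the estimate holds for \emph{every} complex conjugate of each $\omega_i$, uniformly in $k$, since it is this uniformity that pins down the absolute value exactly.
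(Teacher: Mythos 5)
The paper does not prove this theorem; it is stated (attributed to Weil and Dwork) as classical background on zeta functions of curves over finite fields, with a pointer to Hartshorne's Appendix~C, and is invoked only to motivate the analogy with the rank-metric zeta polynomial. So there is no ``paper's proof'' to compare against. That said, your sketch is a correct outline of the standard argument and worth a few remarks.

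Your route is the textbook one. Rationality: expand $Z(C,T)$ as a sum over effective divisors, group by linear equivalence class, use that a class of degree $d\geq 0$ contributes $(q^{\ell(D)}-1)/(q-1)$ effective divisors and that $\ell(D)=d+1-g$ once $d>2g-2$; this splits $Z$ into a polynomial head plus a rational tail with denominator $(1-T)(1-qT)$, and since the coefficients $A_n$ of $Z$ count effective divisors they are integers, so $P\in\mathbb{Z}[T]$ with $P(0)=1$ and $\deg P\leq 2g$. Functional equation: Serre duality and the involution $[D]\mapsto[K-D]$ give $q^{g}T^{2g}P(1/(qT))=P(T)$, which forces $\deg P=2g$ and pairs $\omega_i$ with $q/\omega_i$. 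The Riemann hypothesis part: your two alternatives --- positivity of the Rosati involution on $\operatorname{End}^{0}(\operatorname{Jac} C)$ together with $\pi^{\dagger}\pi=q$, or the Castelnuovo--Severi inequality on $C\times C$ via the Hodge index theorem --- are exactly the two classical proofs (Weil's Jacobian argument and Weil's intersection-theoretic argument). Your identification of the reciprocal roots of $P$ with the Frobenius eigenvalues on $T_{\ell}\operatorname{Jac}(C)\cong H^{1}_{\mathrm{et}}(C,\mathbb{Q}_{\ell})$ and of $N_{k}=q^{k}+1-\sum_{i}\omega_{i}^{k}$ with the Lefschetz trace formula is also correct, and you correctly flag that the uniformity in $k$ (and over all complex embeddings) is what pins down $|\omega_{i}|=q^{1/2}$ exactly. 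One small caveat: Dwork's contribution is rationality for general varieties by $p$-adic analysis, which you mention; for curves the Riemann--Roch route you follow predates Dwork and is due to F.~K.~Schmidt, with the RH part due to Hasse (genus $1$) and Weil. Since the paper only cites this theorem and does not prove it, no direct comparison is possible, but your sketch is sound and identifies the genuine hard point correctly.
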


As we see, $Z(C,T)$ is primarily used as a generating function for the number of points on a curve. There is no immediate analogue of this property for linear codes. In another interpretation, the zeta function of a curve describes the growth rate for the dimensions of linear systems on the curve. This is analogous to describing the growth rate of the binomial moments of a linear code. For codes for which the growth rate of the binomial moments is close to the growth rate of dimensions of linear systems the zeroes of the zeta polynomial will lie on the critical line. For the Hamming metric, this occurs for remarkably many codes (more so when the field size is small), including for infinite families of extremal weight enumerators. The zeta polynomial defined in this paper measures the growth rate of binomial moments for rank-metric codes, and we may ask whether this growth rate is such that we can expect to find codes with zeroes of the zeta polynomial on the critical line. 
We give an example where the zeta polynomial is an exact match to the zeta polynomial of a (Hasse-Weil maximal) elliptic curve. 

\begin{example}
	To construct a formally self-dual $4 \times 4$ rank-metric code we take the extended binary QR-code of length 18 in double-circulant form. We puncture at the last coordinate in one circulant block and shorten at the last coordinate in the other block. We then read out the remaining 16 coordinates in order and from each group of four coordinates form a row for a codeword in $4 \times 4$ format. The rank distribution for the 256 codewords is $0^1, 2^{21}, 3^{162}, 4^{72}.$ The normalized binomial moments are $1,1,8/5,16,256$ and the zeta polynomial is $(1+8T+16T^2)/25=(1+4T)^2/25$. The zeroes $T=-1/4$ (twice) have absolute value $(q^m)^{-1/2}=1/\sqrt{16}$ and thus lie on the critical line. The zeta polynomial is that of a maximal elliptic curve over the field of 16 elements. 
\end{example}

\begin{rem}
Note that constructing a self-dual rank-metric code from a self-dual linear code in this way is not unique; in other words, equivalent codes of length 16 may lead to inequivalent $4\times 4$ rank-metric codes. The rank-weight distributions obtained need not bear any relations to each other. It requires further research as to what distributions, and hence what zeta polynomials, may occur from codes constructed in this way.
\end{rem}

Heuristics for the Hamming metric are that a sufficient condition for a matching growth rate, and thus for zeroes on the critical line, are that a (formally self-dual) code has large enough minimum distance and weight distribution close to that of a random code. Precise formulations of these observations and their verification are an open problem.

We applied the same construction as in the example to double-circulant codes of length 38 to obtain formally self-dual $6 \times 6$ rank-metric codes. Among those, 12 codes had minimum rank distance 3 and thus a quadratic zeta polynomial. The rank distributions for these codes are not far apart and in all 12 cases the zeroes of the zeta polynomial are on the critical line. Among codes with lower minimum rank distance a wider range of rank distributions occurs. The distributions close to the average have zeroes on the critical line and those farther away have pairs of real zeroes. Typically one of the real zeroes among $( \alpha, 1/(q^m \alpha) )$ is close to 1 and the other close to $1/q^m$. This agrees with observations for the Hamming metric. These preliminary observations suggest that the zeroes of zeta polynomials for the rank-metric have properties similar to those for the Hamming metric. It remains to make these properties precise and to establish to what extent or under which additional assumptions the properties are similar to those for curves.

\subsection{Self-dual divisible codes} Here we consider a family of codes which are divisible (that is, the rank-weight of every codeword is divisible by some integer $c>1$) and also formally self-dual. It is known that for codes in the Hamming metric, such codes can only exist for $(q,c)\in \{(2,2),(2,4),(3,3),(4,2)\}$. In \cite{D01} the zeta polynomials of self-dual divisible codes that are also {\it random} were considered. 

 Here we show the existence of formally self-dual divisible codes for $c=2$ and all values of $q$. However we note that this construction never leads to random codes.

The matrix space $M_{m\times n}(\fqc)$ can be naturally embedded in the space $M_{cm\times cn}(\fq)$. The rank of every matrix in the image of this map is a multiple of $c$; in fact, we have 
$$
\mbox{rk}_{\fq}(A)=c\cdot \mbox{rk}_{\fqc}(A).
$$
Thus from codes over an extension field, we can produce {\it divisible codes} easily. This is in contrast to the Hamming metric case, where the Hamming weight of a vector does not behave well when reducing the field of interest.

A code $\Cc'$ of dimension $k$ over $\fqc$ produces a code $\Cc$ of dimension $kc$ over $\fq$. If we wish $\Cc$ to be formally self-dual, we must have $k= mnc/2$. As $k\leq mn$, we need $c=2$, and thus $\Cc'$ is in fact the full space $M_{m\times n}(\fqt)$.

For the image of the space $M_{2\times 2}(\fqt)$ in $M_{4\times 4}(\fq)$, we get the zeta polynomial
\[
P(T)=\frac{1+ (-q^4 + q^2 + q)T +q^4T^2}{q^2+q+1}.
\]
For example taking $q=2$ we get the zeta polynomial $P(T)=(1-10T+16T^2)/7$. The discriminant of this polynomial is positive for all values of $q\geq 2$, and so we get two real roots, whose product is $q^{-4}$.

For the image of the space $M_{3\times 3}(\fqt)$ in $M_{6\times 6}(\fq)$, we get the zeta polynomial
{\footnotesize
\[
P(T)=\frac{1+(-q^6 + q^2 + q)T+ (-q^8 - q^7 + q^6 + q^4 + q^3)T^2 + (-q^{12} + q^8 + q^7)T^3+q^{12}T^4}{q^4+q^3+q^2+q+1}
\]
}
For example taking $q=2$ we get $P(T) = (1-58T-296T^2-3712T^3+4096T^4)/15$, which has two real roots and one pair of conjugate complex roots.

In general, the image $\Cc$ of the space $M_{m\times m}(\fqt)$ in $M_{2m \times 2m}(\fq)$ is a formally self-dual code with minimum rank distance $d=2$ and all rank-weights divisible by $c = 2$. Let $\rho=q^{-m}.$  The zeta polynomial $P(T)$ for $\Cc$ has two real roots. As $m \to \infty$, the roots converge to $1$ and $\rho^2$. The rescaled polynomial $p(T) = P(\rho T) / P(0) = 1 + \cdots + T^{2m-2}$ is self-reciprocal. It has two real roots that converge to $\rho^{-1}$ and $\rho$ as $m \to \infty$ and $2m-4$ complex roots that lie on the unit circle. As $m \to \infty$, the factor $p_1(T)$ of $p(T)$ with two real roots converges to $1-(\rho^{-1}+\rho)T+T^2.$
The cofactor $p_2(T)$ with complex roots converges to $1+T^{2m-4}$. 

The coefficients of $p(T)$ approach $0$ as $m \to \infty$ except for the coefficients at $1, T, T^2$ and their reciprocals. The coefficients of $p(T)$ at $1, T$ and $T^2$ can be obtained with (\ref{eq:mrd}) by expressing the rank-weight enumerator of $\Cc$ as a linear combination of the $\fq$ rank-weight enumerators $M_{2m,d}$, for $d \geq 2$.
We find   
\begin{multline*}
\lim _{m \to \infty} p(T) = 1 - (q^m-\rho(q^2+q)) T - (q^2+q-\rho^2(q^6+q^4+q^2) T^2 + \cdots + \\ - (q^2+q-\rho^2(q^6+q^4+q^2) T^{2m-4} - 
 (q^m-\rho(q^2+q)) T^{2m-3} + T^{2m-2}.
\end{multline*} 
As $m \to \infty$, and thus $\rho = q^{-m} \to 0$, $p(\rho) \to 0.$ Thus $p_1(T) = 1-(\rho^{-1}+\rho)T+T^2$ divides the limit
and the cofactor $p_2(T)$ satisfies
\begin{multline*}
\lim_{m \to \infty} p_2(T) =  
1 + (q^2+q+1)\rho T + (q^6+q^4+2q^2+q+1) \rho^2 T^2 + \cdots  + \\ +  (q^6+q^4+2q^2+q+1) \rho^2 T^{2m-6} +  (q^2+q+1)\rho T^{2m-5} + T^{2m-4}.
\end{multline*}
The limits converge fast. We find that the complex zeros of the zeta polynomial for the image of $M_{m\times m}(\fqt)$ in $M_{2m \times 2m}(\fq)$
all have the same absolute value $\rho$ and, already for small $m$, are close to the zeros of $1+(q^mT)^{2m-4}=0$.

\begin{center}
\begin{figure}
 \vspace{-3cm}
 \includegraphics[width=10cm]{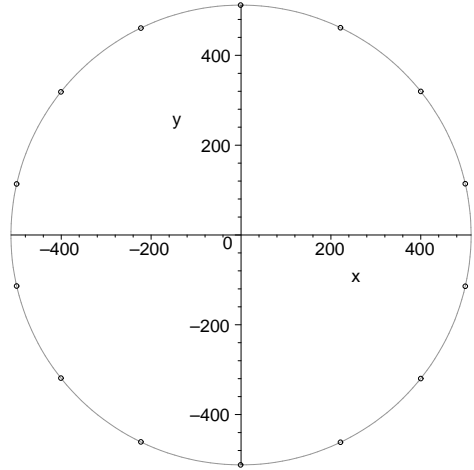}
 \vspace{-3cm}
 \caption{Complex zeroes for the image of $M_9(\mathbb{F}_4)$ in $M_{18}(\mathbb{F}_2)$.} 
\end{figure}
\end{center}

\end{document}